\numberwithin{equation}{section}
\newtheorem{thm}{Theorem}[section]
\newtheorem{lem}[thm]{Lemma}
\newtheorem{prop}[thm]{Proposition}
\newtheorem{rem}[thm]{Remark}
\newtheorem{exam}[thm]{Example}
\newtheorem{dfn}[thm]{Definition}
\newcommand{\Arrow}{\rightarrow}
\newcommand{\lrrow}{\longrightarrow}
\newcommand{\Cour}[1]      {[\![#1]\!]}
\newcommand{\p}            {^{\perp}}
\newcommand{\Ann}[1]       {\mathrm{Ann}\left(#1\right)}
\newcommand{\End}[1]       {\mathrm{End}\left(#1\right)}
\newcommand{\Ext}[1]       {\wedge^{\bullet} #1}
\newcommand{\Graph}[1]     {\mathrm{Graph}\left(#1\right)}
\newcommand{\pr}           {\mathrm{pr}}
\newcommand{\Aut}          {\mathrm{Aut}}
\newcommand{\Der}          {{\mathrm {Der}}}
\newcommand{\op}[1]        {#1^{\vphantom{}^{\mathrm{op}}}}
\newcommand{\frakg}     {\mathfrak{g}}
\newcommand{\Lie}       {\mathcal{L}}
\newcommand{\frakh}     {\mathfrak{h}}
\newcommand{\ad}        {{\mathrm {ad}}}
\newcommand{\A}         {\mathcal{A}}
\newcommand{\Diff}       {{\mathrm {Diff}}}
\newcommand{\C}            {\mathbb{C}}
\newcommand{\R}            {\mathbb{R}}
\newcommand{\T}            {\mathbb{T}}
\newcommand{\J}            {\mathcal{J}}
\begin{document}
\title{Generalized reduction and pure spinors.}
\author{T. Drummond }
\address{Universidade Federal do Rio de Janeiro, Instituto de Matem\'atica, 21945-970, Rio de Janeiro - Brazil.}
\email{drummond@im.ufrj.br}
\begin{abstract}
We study reduction of Dirac structures as developed by Bursztyn, Cavalcanti and Gualtieri \cite{b_c_g} from the point of view of pure spinors. We \linebreak describe explicitly the pure spinor line bundle of the reduced Dirac structure. We also obtain results on reduction of generalized Calabi-Yau structures.
\end{abstract}
\maketitle

\tableofcontents

\section*{Introduction.}

In this paper we study symmetries and reduction of Dirac structures \cite{courant, liu_w_xu} and generalized complex structures \cite{gualt_thesis, hitchin} from the viewpoint of pure spinors. As discussed in \cite{gualt_thesis}, pure spinors provide a key tool in generalized complex geometry, specially to study  generalized Calabi Yau structures. Our main focus is the interplay between Dirac structures/generalized complex structures and pure spinors in the reduction procedure introduced by H. Bursztyn, G. Cavalcanti and M. Gualtieri \cite{b_c_g} (see also \cite{lin_tolman, stie_xu} for other reduction procedures). More precisely, the  main goal of this work is to find an explicit  description of the pure spinor line bundle associated to the Dirac structure $L_{red}$ obtained by reducing a Dirac structure $L$ under the procedure of \cite{b_c_g}. In \cite{nitta}, Y. Nitta introduced a procedure to reduce pure spinors in the context of generalized Calabi-Yau structures. The motivation for this paper is to put the work of Y. Nitta into the broader perspective of \cite{b_c_g} and to move toward a general procedure to reduce generalized Calabi Yau structures.

The set-up for the reduction on \cite{b_c_g} is given by an action of a (compact, connected) Lie group $G$ on $M$ which is lifted to an action on Courant algebroid $E$ by automorphisms and an invariant submanifold $N \subset M$ on which $G$ acts  freely. With these data in hands, one constructs an isotropic subbundle $K \subset E|_N$ from which one obtains a reduced Courant algebroid $E_{red}$ over $M_{red}= N/G$ in which the reduced Dirac structures will be defined (see Theorem \ref{reduction_courant}). The reduction procedure itself start with an invariant Dirac structure $L \subset E$ for which $L|_N \cap K$ has constant rank (a clean intersection condition) and gives the reduced Dirac structure $L_{red} \subset E_{red}$. Our approach to find the reduced pure spinor line bundle is based on a new description of $L_{red}$ (see Proposition \ref{L_char}) which, as an aside, relates the procedure of M. Stienon and P. Xu \cite{stie_xu} to reduce generalize complex structures with that of \cite{b_c_g}. We prove that, up to the choice of a connection on $N$, $L_{red}$ is obtained from $L$ by a combination of pull-back by the inclusion map $j: N \Arrow M$ and push-forward by the quotient map $q: N \Arrow M_{red}$ (see \cite{bursztyn_radko} for the definition of such operations) exactly as in \cite{stie_xu}. The choice of a connection has the cost of introducing a 2-form  $B\in \Omega^2(N)$ (which should be thought of as a change of coordinates) in the picture. From this, it is a simple matter to describe the pure spinor line bundle of $L_{red}$: if $\varphi \in \Gamma(\Ext{T^*M})$ is a pure spinor for $L$, then
\begin{equation}\label{intro_eq}
\varphi_{red} = q_* (e^B \wedge j^*\varphi)
\end{equation}
is a pure spinor for $L_{red}$, where $q_*: \Omega(N) \Arrow \Omega(M_{red})$ is the push-forward of differential forms on the principal bundle $q:N \Arrow M_{red}$ (see Theorem \ref{main_transv}). Moreover, we prove that, for $x \in  N$, $\varphi_{red}|_{q(x)} \neq 0$ if and only if $L_x \cap K_x = 0$ (a transversality condition). In \S 4.1, we use formula \eqref{intro_eq} to give an alternative explanation of a recent result of G. Cavalcanti and M. Gualtieri \cite{c_g} relating T-duality \cite{bou_ev, bou_hanna} to generalized geometry. 

In important examples (e.g.  Nitta's reduction procedure), the transversality condition $L|_N\cap K=0$ is not satisfied and, therefore, formula \eqref{intro_eq} does not solve the problem of describing the pure spinor line bundle of $L_{red}$. Overcoming this transversality issue is the main technical problem we have to handle. To do that, we develop a pertubation procedure that replaces $L$ by a new Lagrangian subbundle $L_D$ of $E|_N$ (depending on the choice of a subbundle $D\subset E|_{N}$) satisfying the transversality condition $L_D \cap K=0$ and producing the same reduced Dirac structure $L_{red}$ as $L$ (see Proposition \ref{pertubation}). At the pure spinor level, the change from $L$ to $L_D$ has the effect of changing the pure spinor $\varphi$ to $\varphi_D$, which is obtained from $\varphi$ by Clifford multiplication by an element of $\det(D)$. In Theorem \ref{main_geral}, we use this pertubation procedure to obtain the general form of a nowhere-zero section of the reduced pure spinor line bundle corresponding to $L_{red}$:
\begin{equation}\label{intro_eq2}
\varphi_{red} = q_*(e^B \wedge j^*\varphi_D).
\end{equation}
This pertubation method enables us  to recover Nitta's reduction \cite{nitta} as a particular instance of \eqref{intro_eq2} when $D$ is properly choosen. We are also able to obtain a new result on reduction of generalized Calabi-Yau structures in the presence of transversality conditions (see Proposition \ref{transv_calabi}). The main obstacle for applying our  method to reduce generalized Calabi-Yau structures in a general setting is the lack of a fine control on how the de Rham differential relates with the pertubation $\varphi \mapsto \varphi_D$. In any case, formula \eqref{intro_eq2} gives us the tool to understand how the Chern classes of the pure spinor line bundles of $L$ and $L_{red}$ are related.

The paper is organized as follows. In Section 1, we recall the main definitions of generalized geometry and set notation. In Section 2, we present the BCG reduction procedure and prove Proposition \ref{L_char} which gives a new characterization of the reduced Dirac structure $L_{red}$. Section 3 is devoted to Theorem \ref{main_geral}, our main theorem which gives an explicit description of the pure spinor line bundle of $L_{red}$. In \S 4.1, we move towards a general theorem about reduction of generalized Calabi-Yau structures and show how the work of Y.Nitta \cite{nitta} fits into the framework developed in \S 3. In the last subsection \S 4.2, we explain how Theorem \ref{main_transv} can be applied to recover a result of G. Cavalcanti and M. Gualtieri \cite{c_g} on T-duality. More specifically, we show that the isomorphism of the twisted-cohomologies of T-duals spaces introduced in \cite{bou_ev} is exactly an instance of formula \eqref{intro_eq} when applied to pure spinors. 

\paragraph{Acknowledgements.}
This work is part of the author's PhD thesis, supported by CNPq. Thanks are due specially to Henrique Bursztyn for his guidance. The author would like to thank also C. Ortiz, A. Cabrera and G. Cavalcanti for helpful discussions.

\paragraph{NOTATION.}
In this paper (specially in Chapter 3), given a linear homomorphism $f: V \Arrow W$, we let $f$ denote also the natural extension $f: \Ext{V} \Arrow \Ext{W}$ to exterior algebras. We believe this will cause no confusion.

\section{Preliminaries.}

\subsection{Courant algebroids.}
Let $M$ be a smooth manifold and let $\pi:E \Arrow M$ be a Courant algebroid \cite{liu_w_xu} over $M$ with anchor $p: E \Arrow TM$, fibrewise non-degenerate symmetric bilinear form $\langle \cdot, \cdot \rangle$ and bilinear bracket $\Cour{ \,, }$ on $\Gamma(E)$. We will only deal with exact Courant algebroids.

\begin{dfn}{\em
A Courant algebroid $E$ is said to be \textit{exact} if the sequence
\begin{equation}\label{exact}
0 \lrrow T^*M \stackrel{p^*} \lrrow E \stackrel{p}\lrrow TM \lrrow 0
\end{equation}
is exact (we use $\langle \cdot, \cdot \rangle$ to identify $E \cong E^*$).
}
\end{dfn}

The standard example of an exact Courant algebroid is $\T M :=TM\oplus T^*M$ with $\pr_{TM}: \T M \Arrow TM$ as anchor, the bilinear symmetric form $\langle \cdot, \cdot \rangle$ canonically given by
\begin{equation}\label{can_bin}
\langle X+\xi, Y+\eta \rangle = i_X \eta + i_Y \xi, \text{ for } X+ \xi, Y+ \eta \in \Gamma(\T M)
\end{equation}
and the bracket $\Cour{\cdot, \cdot}$ given by
\begin{equation}\label{Courant_bracket}
\Cour{X+\xi, Y+ \eta} = [X,Y] + \Lie_X \eta -i_Yd\eta.
\end{equation}

Given an exact Courant algebroid $E$, it is always possible to find an \textit{isotropic splitting} $\nabla: TM \Arrow E$ for \eqref{exact}, i.e. a splitting $\nabla$ whose image is isotropic with respect to $\langle \cdot, \cdot \rangle$. The space of isotropic splittings for $E$ is affine over $\Omega^2(M)$, i.e. given a 2-form $B$ and an isotropic splitting $\nabla$, one has that
$$
(\nabla + B) (X) = \nabla X + p^*(i_XB)
$$
is also an isotropic splitting and any two isotropic splitting are in the same $\Omega^2(M)$-orbit. The \textit{curvature} of $\nabla$ is the closed 3-form $H \in \Omega^3(M)$ defined by
$$
H(X,Y,Z) = \langle \Cour{\nabla X, \nabla Y}, Z\rangle.
$$
If we change $\nabla$ to $\nabla+B$, its curvature changes to $H+dB$. The cohomology class $[H] \in H^3(M, \R)$ does not depend on the splitting, and it is called the \textit{\v{S}evera class} of $E$ (see \cite{severa_weinstein} for more details).

For an isotropic splitting $\nabla$ with curvature $H \in \Omega^3(M)$, the isomorphism $\nabla + p^*: TM \oplus T^*M \lrrow E$ identifies the bilinear form and the bracket on $E$ with the bilinear form \eqref{can_bin} and the \textit{$H$-twisted Courant bracket} \cite{severa_weinstein}
\begin{equation}\label{H_twisted}
\Cour{X+\xi, Y+\eta}_H = [X,Y] + \Lie_X \eta - i_Y(d\xi - i_XH)
\end{equation}
respectively, where $X, Y \in \Gamma(TM)$, $\xi, \eta \in \Gamma(T^*M)$.

Define $\Phi_{\nabla}: E \Arrow TM\oplus T^*M$ by
\begin{equation}\label{fi_nabla}
\Phi_{\nabla}= (\nabla + p^*)^{-1}.
\end{equation}
For a 2-form $B \in \Omega^2(M)$, we have
\begin{equation}\label{change_split}
\tau_B \circ \Phi_{\nabla+B} = \Phi_{\nabla},
\end{equation}
where $\tau_B: \T M \Arrow \T M$ is the so called \textit{$B$-field transformation} and it is given by
\begin{equation}\label{B_field}
\tau_B(X+\xi)= X + \xi+i_XB , \,\text{for } X \in TM, \, \xi \in T^*M.
\end{equation}

Let $L \subset E$ be a subbundle. Define
$$
L\p = \{ e \in E \,\, | \,\, \langle e, \cdot \rangle|_L \equiv 0 \}.
$$
$L$ is said to be \textit{isotropic} if $L \subset L\p$ and \textit{Lagragian} if $L=L\p$. Equivalently,  $L$ is Lagrangian if  it is isotropic and $\dim(L)=\dim(M)$. 

\begin{dfn}\cite{courant} {\em
A Lagrangian subbundle $L \subset E$ is said to be \textit{integrable} if $\Cour{\Gamma(L), \Gamma(L)} \subset \Gamma(L)$. In this case we say that $L$ is a \textit{Dirac structure}.
}
\end{dfn}

In this work, we are mainly concerned with Dirac structures. Recently, there has been a lot of interest in these structures motivated by the work of M. Gualtieri in generalized complex geometry and its applications in physics (see \cite{gualt_thesis} and references therein). 

Fix a closed 3-form $H \in \Omega^3(M)$. We finish this subsection giving some examples of Dirac structures $L \subset (\T M , \Cour{\cdot, \cdot}_H)$.

\begin{exam}\label{2_form}{\em
For a 2-form $\omega \in \Omega^2(M)$, its graph $L_{\omega}= \{(X, \omega(X, \cdot)) \,\,| \,\, X \in TM \} \subset \T M$ is a Lagrangian subbundle of $\T M$ and $L_{\omega}$ is integrable if and only if $d\omega = -H$. 
}
\end{exam}

\begin{exam}\label{distributions}{\em
For a distribution $\Delta \subset TM$ on $M$, $L_{\Delta} = \Delta \oplus \Ann{\Delta} \subset \T M$ is a Lagrangian subbundle of $\T M$. $L_{\Delta}$ is integrable if and only if $\Delta= T \mathcal{F} \text{ and } H|_{\mathcal{F}}\equiv 0$ for $\mathcal{F}$ a foliation of $M$.
}
\end{exam}

\subsection{Symmetries.} 
Let $E$ be an exact Courant algebroid over $M$.

\begin{dfn}\label{sym_def}\cite{b_c_g, hu} {\em
The automorphism group $\Aut(E)$ of a Courant algebroid $E$ is the group of pairs $(\Psi, \psi)$, where $\Psi:E\Arrow E$ is a bundle automorphism covering $\psi \in \Diff(M)$ such that
\begin{itemize}
\item[(1)] $\psi^* \langle \Psi( \cdot), \Psi(\cdot)\rangle = \langle \cdot, \cdot\rangle$
\item[(2)] $\Cour{\Psi(\cdot), \Psi(\cdot) } = \Psi\Cour{\cdot, \cdot }$;
\item[(3)] $p \circ \Psi = \psi_* \circ p$.
\end{itemize}
}
\end{dfn}

Fix an isotropic splitting $\nabla : TM \Arrow E$.

\begin{prop}[\cite{b_c_g}]\label{nabla_symmetries}
For $(\Psi, \psi) \in \Aut(E)$, there exists $B \in \Omega^2(M)$ such that
\begin{equation}\label{Phi_B}
\Phi_{\nabla} \circ \Psi \circ \Phi_{\nabla}^{-1} = 
\left(\begin{array}{cc}
\psi_* & 0 \\
0 & (\psi^{-1})^*
\end{array} \right )
\circ 
\tau_B
\end{equation}
where $\tau_B$ is the associated $B$-field transformation \eqref{B_field}. Moreover, if $H \in \Omega^3(M)$ is the curvature of $\nabla$,
\begin{equation}\label{coclass_preserved}
H-\psi^*H =dB.
\end{equation}
\end{prop}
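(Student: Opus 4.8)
The plan is to transport the whole problem to the split model $\T M = TM\oplus T^*M$ via the isometry $\Phi_{\nabla}$ of \eqref{fi_nabla}, and then read off both assertions from the three defining conditions of $\Aut(E)$. Set $\tilde\Psi = \Phi_{\nabla}\circ\Psi\circ\Phi_{\nabla}^{-1}:\T M\Arrow\T M$. Since $\Phi_{\nabla}$ intertwines $\langle\cdot,\cdot\rangle$ and $p$ with the canonical pairing \eqref{can_bin} and $\pr_{TM}$, and intertwines the bracket on $E$ with the $H$-twisted bracket \eqref{H_twisted}, conditions (1)--(3) of Definition \ref{sym_def} translate into statements about $\tilde\Psi$ alone: $\tilde\Psi$ is a fibrewise isometry covering $\psi$, it satisfies $\pr_{TM}\circ\tilde\Psi=\psi_*\circ\pr_{TM}$, and it is an automorphism of $\Cour{\cdot,\cdot}_H$.

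First I would establish the matrix form \eqref{Phi_B}, which is pure linear algebra. The condition $\pr_{TM}\circ\tilde\Psi=\psi_*\circ\pr_{TM}$ forces the $TM$-component of $\tilde\Psi(X+\xi)$ to be $\psi_*X$, so $\tilde\Psi$ is block lower-triangular with top-left entry $\psi_*$ and some maps $\beta:TM\Arrow T^*M$, $A:T^*M\Arrow T^*M$ below. Feeding this into the isometry condition $\langle\tilde\Psi a,\tilde\Psi b\rangle=\langle a,b\rangle$ and testing on the three types of homogeneous pairs (covector--covector, covector--vector, vector--vector) pins down $A=(\psi^{-1})^*$ and shows that $\psi^*\beta$ is skew, hence defines a genuine $2$-form $B\in\Omega^2(M)$. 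A short check then rewrites $\tilde\Psi$ as $\Theta_\psi\circ\tau_B$, where $\Theta_\psi$ denotes the diagonal map with entries $\psi_*$ and $(\psi^{-1})^*$ and $\tau_B$ is \eqref{B_field}; this is exactly \eqref{Phi_B}.

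For \eqref{coclass_preserved} I would exploit that $\tilde\Psi=\Theta_\psi\circ\tau_B$ is an automorphism of $\Cour{\cdot,\cdot}_H$, together with two naturality laws of the twisted bracket. The first is that the diffeomorphism action intertwines twistings by pullback, $\Cour{\Theta_\psi a,\Theta_\psi b}_H=\Theta_\psi\Cour{a,b}_{\psi^*H}$; the second is the $B$-field law $\Cour{\tau_B a,\tau_B b}_{H'}=\tau_B\Cour{a,b}_{H'+dB}$, valid for every $3$-form $H'$. Composing these for $\tilde\Psi=\Theta_\psi\circ\tau_B$ gives
\begin{equation*}
\Cour{\tilde\Psi a,\tilde\Psi b}_H=\tilde\Psi\,\Cour{a,b}_{\psi^*H+dB}.
\end{equation*}
Comparing with the automorphism property $\Cour{\tilde\Psi a,\tilde\Psi b}_H=\tilde\Psi\Cour{a,b}_H$ and cancelling the invertible $\tilde\Psi$ yields $\Cour{a,b}_{\psi^*H+dB}=\Cour{a,b}_H$ for all sections. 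Since two twisted brackets differ precisely by the tensorial term $i_Yi_X(H_1-H_2)$ in the $T^*M$-slot, this forces $\psi^*H+dB=H$, which is \eqref{coclass_preserved}.

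The routine parts are the isometry bookkeeping in the second step and the cancellation at the end. The main obstacle --- and the only genuinely computational point --- is establishing the $B$-field law $\Cour{\tau_B a,\tau_B b}_{H'}=\tau_B\Cour{a,b}_{H'+dB}$, which I would verify by a direct Cartan-calculus computation on \eqref{H_twisted}, using the identities $[\Lie_X,i_Y]=i_{[X,Y]}$ and $\Lie_X=d\,i_X+i_X\,d$ to convert the stray terms $\Lie_X i_Y B - i_Y d\,i_X B$ into $i_Yi_X\,dB$. Getting the sign of $dB$ right here is what ultimately distinguishes $H-\psi^*H=dB$ from its negative, so I would keep careful track of the conventions in \eqref{H_twisted} throughout.
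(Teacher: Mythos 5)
Your proof is correct: the paper itself offers no proof of this proposition (it is quoted from \cite{b_c_g}), and your argument — splitting via $\Phi_{\nabla}$, using the anchor condition to force the lower-triangular block form, the isometry condition to pin down $(\psi^{-1})^*$ and the skew form $B$, and the two naturality laws of the twisted bracket to extract $H-\psi^*H=dB$ — is precisely the standard argument given in \cite{b_c_g}. The sign bookkeeping you flag does come out right with this paper's conventions \eqref{can_bin}, \eqref{H_twisted}, \eqref{B_field}: the $B$-field law reads $\Cour{\tau_B a,\tau_B b}_{H'}=\tau_B\Cour{a,b}_{H'+dB}$, so comparing with bracket preservation yields $\psi^*H+dB=H$ as claimed.
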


\begin{rem}\label{pertub_split}{\em
For $A=(\Psi, \psi) \in \Aut(E)$, the composition
\begin{equation}\label{nabla_A}
T_x M \stackrel{\psi_*}\lrrow T_{\psi(x)} M \stackrel{\nabla} \lrrow E_{\psi(x)} \stackrel{\Psi^{-1}} \lrrow E_{x}, \, \, x\in M,
\end{equation}
defines an isotropic splitting $\nabla^A$ which differs from $\nabla$ exactly by the 2-form $B$ whose existence is stated in Proposition \ref{nabla_symmetries} (i.e. $\nabla = \nabla^A + B$).
}
\end{rem}

We say that $(\Psi, \psi)$ \textit{preserves the splitting} if the splitting $\nabla^A$ defined by \eqref{nabla_A} equals $\nabla$ (i.e. $\Psi \circ \nabla = \nabla \circ \psi_*$). In this case, 
$$
\Phi_{\nabla} \circ \Psi \circ \Phi_{\nabla}^{-1} = 
\left(\begin{array}{cc}
\psi_* & 0 \\
0 & (\psi^{-1})^*
\end{array} \right )
$$
and $\psi^*H= H$. 

The Lie algebra of derivations $\Der(E)$ is the Lie algebra of covariant differential operators $A: \Gamma(E) \Arrow \Gamma(E)$ covering vector fields $X\in \Gamma(TM)$ such that
$$
\Lie_X \langle \cdot, \cdot \rangle = \langle A \cdot, \cdot\rangle + \langle \cdot, A \cdot\rangle \,\text{ and } \, A\Cour{\cdot, \cdot} = \Cour{A\cdot, \cdot} + \Cour{\cdot, A \cdot}.
$$
Choosing an isotropic splitting $\nabla$, Proposition \ref{nabla_symmetries} identifies $\Der(E)$ with the set of pairs $(X,B) \in \Gamma(TM) \times \Omega^2(M)$ such that $\Lie_X H = -dB$ (where $H$ is the curvature of $\nabla$). The action of $(X,B)$ on $\Gamma(\T M)$ is given by
$$
(X,B)\cdot (Y+\eta) = [X,Y] + \Lie_X \eta - i_Y B.
$$
In particular, given a section $X+\xi$ of $\T M$, one has
$$
(X, d\xi - i_XH) \cdot (Y+\eta) = \Cour{X+\xi, Y+\eta}_H.
$$
Hence, $\Cour{X+\xi, \cdot}$ is a derivation of $E$ which we call \textit{inner derivation}. In general, the adjoint map
$$
ad: e \in E \longmapsto \Cour{e, \cdot} \in \Der(E)
$$
is not injective nor surjective  (see Propoisition 2.5. of \cite{b_c_g} for more details).

\subsection{Pure Spinors.}

Let $E$ be an exact Courant algebroid over $M$ and consider the Clifford bundle $Cl(E)$ associated to $(E, \langle \cdot, \cdot \rangle)$ (see \cite{spin_geo}). The fiber over $x \in M$ is $Cl(E_x)$, the Clifford algebra corresponding to $(E_x, \langle \cdot, \cdot \rangle|_{x})$. It is generated by elements in $E_x$ subject to the relation
\begin{equation}\label{ccr}
e_1 e_2 + e_2 e_1 = \langle e_1, e_2 \rangle.
\end{equation}

\paragraph{Contravariant spinors.}

Given an isotropic splitting $\nabla: TM \Arrow E$, consider the identification $\Phi_{\nabla}: E \Arrow TM$ \eqref{fi_nabla}. For $e \in E$, define
\begin{equation}\label{s_nabla}
s_{\nabla}(e) = \pr_{T^*M}(\Phi_{\nabla}(e)).
\end{equation}
Associated to $\nabla$, there exists a representation of the Clifford bundle $Cl(E)$ (see \cite{alekseev_xu}),
$
\Pi_{\nabla}: Cl(E) \Arrow \End{\Ext{T^*M}},
$
given on generators $e \in E$ by 
$$
\Pi_{\nabla}(e) \alpha = i_{p(e)} \alpha + s_{\nabla}(e) \wedge \alpha, \, \text{ for } \alpha \in \Ext{T^*M}.
$$
Given a section $\varphi \in \Gamma(\Ext{T^*M})$, define 
\begin{equation}\label{N_nab}
\mathcal{N}_{\nabla}(\varphi) = \{e \in E\,\, | \,\, \Pi_{\nabla}(e) \varphi=0\}.
\end{equation}
The relations \eqref{ccr} imply that $\mathcal{N}_{\nabla}(\varphi)$ is an isotropic subbundle of $E$ (whenever it has constant rank).

\begin{dfn}{\em
We say that $\varphi \in \Gamma(\Ext{T^*M})$ is a \textit{pure spinor} if $\mathcal{N}_{\nabla}(\varphi)$ is a Lagrangian subbundle of $E$.
}
\end{dfn}

Dually, given a Lagrangian subbundle $L \subset E$, define
\begin{equation}\label{U_nab}
U_{\nabla}(L) = \{ \varphi \in \Ext{T^*M}\,\, | \,\, \Pi_{\nabla}(e)\varphi=0, \, \forall \, e \in L\}.
\end{equation}
It can be proven (see \cite{chevalley}) that $U_{\nabla}(L)$ defines a line bundle on $M$ called the \textit{pure spinor line bundle} associated to $L$.

If we change the splitting by a 2-form $B \in \Omega^2(M)$, the representation changes accordingly,
\begin{equation}\label{B_rep}
\Pi_{\nabla+B} \circ e^B = e^B \circ \Pi_{\nabla},
\end{equation}
where $e^B: \Ext{T^*M} \Arrow \Ext{T^*M}$ is the exterior multiplication by $e^B= \sum_{n=0}^{\infty} \frac{1}{n!} B^n$. In particular, for any Lagrangian subbundle $L\subset E$,
\begin{equation}\label{B_spin}
U_{\nabla+B}(L) = e^B (U_{\nabla}(L)).
\end{equation}

We shall omit the reference to $\nabla$ in $U_{\nabla}$ (resp. $N_{\nabla}, \, \Pi_{\nabla}$) when considering the canonical splitting $X\in TM \Arrow (X,0) \in \T M$ for the class of Courant algebroids $(\T M, \Cour{\cdot, \cdot}_H)$, $H \in \Omega^3_{cl}(M)$.

\begin{exam}\label{omega_spin}{\em
Let $\omega \in \Omega^2(M)$ be a two form. Corresponding to the Lagrangian subbundle $L_{\omega}$ given on Example \ref{2_form},
$
\varphi= e^{-\omega}
$
is a global section for the pure spinor line bundle $U(L_{\omega})$.
}
\end{exam}

\begin{exam}\label{delta_spin}{\em
Let $\Delta \subset TM$ be a distribution. The pure spinor line bundle corresponding to $L_{\Delta}$ (see Example \ref{distributions}) is $U(L_{\Delta}) = \det (\Ann{\Delta})$, the conormal bundle of $\Delta$. In the extremal cases, $\Delta = TM$ and $\Delta = \{0\}$, one has
$$
U(L_{TM}) = \wedge^0 T^*M \,\,\text{ and } \,\, U(L_{\{0\}}) = \det (T^*M).
$$
}
\end{exam}

Regarding Examples \ref{omega_spin} and \ref{delta_spin}, note that although there exists a global section for $U(L_{\omega})$,  the existence of a global section for $U(L_{\Delta})$ is obstructed by the orientability of the space of leaves (i.e. $U(L_{\Delta})$ has a global section if and only if $\Delta$ is transversally orientable). This shows that the pure spinor line bundle carries extra information about the Dirac structure.

The integrability of a Lagrangian subbundle $L \subset E$ can be encoded by its pure spinor line bundle $U_{\nabla}(L)$, where $\nabla$ is an isotropic splitting with curvature $H \in \Omega^3(M)$. One has that
\begin{equation}\label{spinor_integrability}
L \text{ is integrable} \Longleftrightarrow \,d_H (U_{\nabla}(L)) \subset \Pi_{\nabla}(Cl(E)) U_{\nabla}(L),
\end{equation}
where $d_H = d - H\wedge \cdot \,\,$ is the \textit{$H$-twisted differential}. We refer to \cite{gualt_thesis} (see also \cite{a_b_m}) for a proof.

\paragraph{Covariant spinors.}
The Clifford bundle also has a representation on multi-vector fields. The map $\op{\Pi_{\nabla}} : Cl(E) \lrrow \End{\Ext{TM}}$ is given on generators $e \in E$ by
$$
\Pi_{\nabla}^{\vphantom{}^{op}}(e)\mathfrak{X} = p(e) \wedge \mathfrak{X} + i_{s_{\nabla}(e)} \mathfrak{X}.
$$
A section $\mathfrak{X} \in \Gamma(\Ext{TM})$ is called a \textit{covariant pure spinor} if
$
\op{N_{\nabla}}(\mathfrak{X}):= \{ e \in E \,\, | \,\, \op{\Pi_{\nabla}}(e) \mathfrak{X} = 0\} 
$
is a Lagrangian subbundle of $E$. The covariant pure spinor line bundle corresponding to a Lagrangian subbundle $L \subset E$ is defined analogously,
$
\op{U_{\nabla}}(L)= \{ \mathfrak{X} \in \Ext{TM} \,\, | \,\, \op{\Pi_{\nabla}}(e) \mathfrak{X} = 0 , \,\, \forall \, e \in L \}.
$

\begin{exam}{\em
Let $\pi \in \wedge^2 TM$ be a bivector field and consider the associated map $\pi^{\sharp}: T^*M \Arrow TM$ which takes $\xi \in T^*M$ to $\pi(\xi, \cdot) \in TM$. Its graph $\Graph{\pi^{\sharp}} = \{(\pi^{\sharp}(\xi) , \xi) \,\, | \,\, \xi \in T^*M\}$ is a Lagrangian subbundle which is integrable if and only if $\pi$ defines a Poisson bracket on $M$. As Example \ref{2_form}, the corresponding covariant pure spinor line bundle $\op{U}_{\nabla}(\Graph{\pi^{\sharp}})$ has a nowhere vanishing section $e^{-\pi}$.
}
\end{exam}

Under the assumption that $M$ is orientable, the contravariant and covariant representation of $Cl(E)$ are isomorphic. Indeed, let $\nu$ be a section of determinant bundle $\det(T^*M)$. Define
\begin{equation}\label{star_map}
\begin{array}{rccl}
\mathcal{F}_{\nu}: & \Ext{TM} & \lrrow & \Ext{T^*M} \\
       &   \mathfrak{X} & \longmapsto & i_{\mathfrak{X}} \nu.
\end{array}
\end{equation}
One has
\begin{equation}\label{star_int}
\mathcal{F}_{\nu} \circ \op{\Pi_{\nabla}} = \Pi_{\nabla} \circ \mathcal{F}_{\nu}
\end{equation}
In general, $\op{\Pi_{\nabla}}$ and $\Pi_{\nabla}$ are only locally isomorphic. 

The map  $\mathcal{F}_{\nu}$ is a Fourier-type transform which exchanges exterior multiplication with (inner) derivation (see \cite{g_s_super} for an explicit formulation of this analogy and also for the proof of \eqref{star_int} in the linear algebra setting). The inverse of $\mathcal{F}_{\nu}$ is
\begin{equation}\label{star_op}
\begin{array}{rccl}
\mathcal{F}_{\mathfrak{p}}: & \Ext{T^*M} & \lrrow & \Ext{TM} \\
       &   \alpha & \longmapsto & i_{\alpha} \mathfrak{p},
\end{array}
\end{equation}
where $\mathfrak{p} \in \det(TM)$ is such that $i_{\nu}\mathfrak{p}=1$.

\paragraph{Action by automorphisms.}
Fix an isotropic splitting $\nabla$ for $E$. The group $\Aut(E)$ acts on the Clifford representation $\Pi_{\nabla}$ as we now explain. Given an element $A=(\Psi, \psi) \in \Aut(E)$, consider the associated splitting (see Remark \ref{pertub_split}) $\nabla^A = \Psi^{-1} \circ \nabla \circ \psi_{*}$ and recall the identification $A = (\psi, B)$, given in Proposition \ref{nabla_symmetries}, where $\nabla = \nabla^A + B$. 

As $\Psi$ preserves the pairing $\langle \cdot, \cdot \rangle$, it defines a bundle map $Cl(\Psi) : Cl(E) \Arrow Cl(E)$
covering $\psi$, which is a fibrewise algebra isomorphism (on generators $e \in E$, it is just $\Psi$ itself). A straightforward calculation shows that $Cl(\Psi)$ intertwines the representations $\Pi_{\nabla}$ and $\Pi_{\nabla^A}$, i.e. the diagram
$$
\begin{CD}
Cl(E) @> \Pi_{\nabla^A} >> \End{\Ext{T^*M}} \\
@V Cl(\Psi) VV        @VV (\psi^{-1})^* \circ (\,\,\cdot\,\,) \circ \,\psi^*   V\\
Cl(E) @> \Pi_{\nabla}>> \End{\Ext{T^*M}}
\end{CD}
$$
commutes for all $a \in Cl(E)$. 

Define a bundle isomorphism $\Sigma_A: \Ext{T^*M} \Arrow \Ext{T^*M}$, covering $\psi$, by
\begin{equation}\label{action_def}
\Sigma_A = (\psi^{-1})^*\circ e^{-B}.
\end{equation}
Note that if $A$ preserves $\nabla$, then $\Sigma_A$ is just $(\psi^{-1})^*$.

Using formula \eqref{B_rep} to relate $\Pi_{\nabla^A}$ and $\Pi_{\nabla}$, one obtains
\begin{equation}\label{action_eq}
\Sigma_A \circ \Pi_{\nabla}(\cdot) \circ \Sigma_A^{-1} = \Pi_{\nabla}\circ Cl(\Psi)(\cdot).
\end{equation}
The map \eqref{action_def} induces an action of $\Aut(E)$ on $\Gamma(\Ext{T^*M})$. This action was first introduced in \cite{hu_uribe}, where it was used to define an equivariant cohomology associated to any exact Courant algebroid. We refer to \cite{hu_uribe} for more details (in particular, for the question of how $\Sigma_A$ depends on $\nabla$).

\begin{rem}\label{ps_invariance}{\em
By equation \eqref{action_eq}, if $\varphi \in \Gamma(\Ext{T^*M})$ is a pure spinor, then $\Sigma_A(\varphi)$ is also a pure spinor and
$$
\mathcal{N}_{\nabla}(\Sigma_A(\varphi)) = \Psi(\mathcal{N}_{\nabla}(\varphi)).
$$
In particular, $\Psi$ leaves  $\mathcal{N}_{\nabla}(\varphi)$ invariant if and only if $\Sigma_A$ preserves the pure spinor line generated by $\varphi$.
}
\end{rem}

\begin{exam}\label{vol_invariant}{\em
Consider the canonical Lagrangian subbundle $p^*T^*M \subset E$. By Example \ref{distributions},
$
U_{\nabla}(p^* T^*M ) = \det (T^*M).
$
For any $A \in \Aut(E)$, one can directly check that $\Sigma_A (\det(T^*M)) = \det(T^*M)$, which implies that $A$ preserves $p^*T^*M$.
}
\end{exam}
\vspace{5pt}

The associated infinitesimal action of $\Der(E)$ on $\Gamma(\Ext{T^*M})$ is given by
$$
(X, B) \cdot \alpha= \Lie_X \alpha + B \wedge \alpha,
$$
for $(X,B) \in \Der(E)$ and $\alpha \in \Gamma(T^*M)$.

In the covariant case, the element $A = (\Psi, \psi) \in \Aut(E)$ acts on $ \mathfrak{X} \in \Ext{TM}$ by
$
\op{\Sigma_A} (\mathfrak{X}) = \psi_* (i_{e^{-B}} \,\mathfrak{X}).
$
If $\psi$ preserves a volume form $\nu \in \det(T^*M)$, then the isomorphism $\mathcal{F}_{\nu}$ \eqref{star_map} intertwines $\op{\Sigma_A}$ and $\Sigma_A$.

\section{Generalized Reduction.}

\subsection{Isotropic lifted actions.}

Let $E$ be an exact Courant algebroid over $M$ and $G$ be a compact, connected Lie group acting on $M$. For $g \in G$, let $\psi_g \in \Diff(M)$ be the corresponding diffeomorphism.

\begin{dfn}\label{lifted_def}\cite{b_c_g}\em
A \textit{$G$-lifted action} on $E$ is a pair $(\mathcal{A}, \chi)$, where $\mathcal{A} :G \Arrow \Aut(E)$ is an homomorphism and $\chi: \frakg \Arrow \Gamma(E)$ is a bracket preserving map satisfying:
\begin{itemize}
\item[(1)] for $g\in G$, the corresponding automorphism $\mathcal{A}_g$ covers $\psi_g$;
\item[(2)] the infinitesimal action $\rho: \frakg \Arrow \Der(E)$ associated to $\mathcal{A}$ admits a factorization
$$
\xy
(0,10)*++^{\frakg}="g"; (20,10)*+^{\Der(E)}="der"; (20,0)*+^{\Gamma(E)}="gamma";
{\ar@{->}^{\rho}"g"; "der"}; 
{\ar@{->}_{\chi}"g"; "gamma"};
{\ar@{->}_{ad}"gamma"; "der"};
\endxy
$$
\end{itemize}

\end{dfn}
The $G$-lifted action $(\mathcal{A}, \chi)$ is said to be \textit{isotropic} if 
$$
\langle \chi(u), \chi(u) \rangle = 0, \,\, \forall \, u \in \frakg.
$$
An isotropic splitting $\nabla: TM \Arrow E$ is said to be \textit{$G$-invariant} if $\mathcal{A}_g$ preserves $\nabla$, for every $g\in G$. By compactness of $G$, there always exists $G$-invariant splittings for $E$.

\begin{exam}\label{model}{\em
Let $H \in \Omega^3(M)$ be a closed 3-form. The map
\begin{equation}\label{standard_action}
\mathcal{A} : g \in G \longmapsto 
\mathcal{A}_g = \left(
\begin{array}{cc}
(\psi_g)_* & 0 \\
0 & \psi_{g^{-1}}^*
\end{array}
\right)
\end{equation}
takes value in $\Aut(\T M, \Cour{\cdot, \cdot}_H)$ if and only if $H$ is invariant. In this case, $\mathcal{A}$ defines an homomorphism. The infinitesimal action $\rho: \frakg \Arrow \Der(\T M)$ corresponding to $\mathcal{A}$ is given, for $Y+\eta \in \Gamma(\T M)$, by
$$
\rho(u) (Y+\eta) = [u_M, Y] + \Lie_{u_M} \eta,
$$
where $u_M$ is the infinitesimal generator of the $G$-action on $M$ corresponding to $u \in \frakg$.
The existence of bracket preserving map $\chi: \frakg \Arrow \Gamma(\T M)$ such that $\rho= \ad\circ \chi$ is equivalent to the existence of a linear map $\xi: \frakg \Arrow \Omega^1(M)$ such that
\begin{itemize}
\item [(i)] $ \rho(u) = \Cour{u_M + \xi(u), \cdot}\,\, ( \Leftrightarrow d\xi(u) -i_{u_M} H = 0)$;
\item[(ii)] $\xi([u,v]) = \Lie_{u_M} \xi(v)$,
\end{itemize}
for every $u, \, v \in \frakg$. In this case, $\chi(u) = u_M + \xi(u)$. Furthermore, $\langle \chi(u), \chi(u) \rangle = 0$ if and only if
\begin{itemize}
\item[(iii)] $i_{u_M} \xi(u) =0$.
\end{itemize}
Following \cite{lin_tolman}, we call $\xi: \frakg \Arrow \Omega^1(M)$ \textit{the moment one-form} for $(\mathcal{A}, \chi)$. 
}
\end{exam}

\begin{rem}\label{gen_lifted}{\em
By choosing a splitting $\nabla$ for $E$, any isotropic $G$-lifted action $(\mathcal{A}, \chi)$ corresponds to an isotropic $G$-lifted action $(\mathcal{A}_{\nabla}, \chi_{\nabla})$ on $(\T M, \Cour{\cdot, \cdot}_{H})$, where $H \in \Omega^3(M)$ is the curvature of $\nabla$. It follows from Proposition \ref{nabla_symmetries} that if $\nabla$ is $G$-invariant, then $(\mathcal{A}_{\nabla}, \chi_{\nabla})$ is of the form considered in Example \ref{model}.
If $\xi_{\nabla}: \frakg \Arrow \Omega^1(M)$ is the moment one-form corresponding to $\mathcal{A}_{\nabla}$, then
\begin{equation}\label{moment_nabla}
\chi(u) = \nabla u_M + p^* \xi_{\nabla}(u), \,\, \forall \,u \in\frakg.
\end{equation}
Moreover,
\begin{equation}\label{moment_B}
\xi_{\nabla+B}(u) = \xi_{\nabla}(u) - i_{u_M} B, \, u \in \frakg,
\end{equation}
is the moment one-form corresponding to $\mathcal{A}_{\nabla+B}$, for $B\in \Omega^2(M)$.
}
\end{rem}

\paragraph{Equivariant cohomology.}
The obstruction to lift a $G$-action on $M$ to an isotropic lifted action on $\T M$ lives on the equivariat cohomology of $M$. Indeed, let 
$$
\Omega_G(M) =  (S(\frakg^*) \otimes \Omega(M))^{G}
$$
be the Cartan model for the equivariant cohomology of $M$ and $d_G: \Omega_{G}(M) \Arrow \Omega_{G}(M)$ be the Cartan differential (see \cite{g_s_super}).

\begin{prop}\cite{b_c_g}\label{equiv_interp}
Let $H \in \Omega^3(M)$ be a closed form and let $\mathcal{A}$ be the map \eqref{standard_action}. There exists a  map $\chi:\frakg \Arrow \Gamma(\T M)$ such that $(\mathcal{A}, \chi)$ defines an isotropic $G$-lifted action $\mathcal{A}$ on $(\T M, \Cour{\cdot, \cdot}_H)$ if and only if there exists $\xi \in S^1(\frakg^*) \otimes \Omega^1(M)$ such that  $H + \xi \in \Omega^3_G(M)$ and $d_G(H +\xi) = 0$. In this case, $\xi$ is the moment one-form of $(\mathcal{A}, \chi)$.
\end{prop}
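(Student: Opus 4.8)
The plan is to reduce both sides of the asserted equivalence to the three conditions (i), (ii) and (iii) of Example \ref{model}, and then match them componentwise against the bigrading of the Cartan model. By Example \ref{model}, since $\mathcal{A}$ is the standard action \eqref{standard_action}, the existence of a $\chi$ making $(\mathcal{A}, \chi)$ an \emph{isotropic} $G$-lifted action on $(\T M, \Cour{\cdot,\cdot}_H)$ is equivalent to: $H$ being $G$-invariant (so that $\mathcal{A}$ actually lands in $\Aut(\T M, \Cour{\cdot,\cdot}_H)$), together with a linear map $\xi:\frakg \Arrow \Omega^1(M)$ satisfying (i) $d\xi(u) - i_{u_M}H = 0$, (ii) $\xi([u,v]) = \Lie_{u_M}\xi(v)$, and (iii) $i_{u_M}\xi(u) = 0$, in which case $\chi(u) = u_M + \xi(u)$ and $\xi$ is the moment one-form. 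So it suffices to show that the data $(H,\xi)$ satisfies these conditions (plus invariance of $H$) if and only if $H + \xi \in \Omega^3_G(M)$ and $d_G(H + \xi) = 0$.

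First I would identify $S^1(\frakg^*) \otimes \Omega^1(M) = \frakg^* \otimes \Omega^1(M)$ with linear maps $\xi:\frakg \Arrow \Omega^1(M)$, so that the two occurrences of $\xi$ genuinely coincide. Next I analyze membership in $\Omega^3_G(M)$. Since $2p+q=3$ forces $(p,q)\in\{(0,3),(1,1)\}$, and $H$ sits in the $(0,3)$ slot while $\xi$ sits in the $(1,1)$ slot, and since $G$ preserves the bigrading, we have $H+\xi\in\Omega^3_G(M)$ iff each piece is separately invariant. Invariance in the $(0,3)$ slot is exactly $\psi_g^* H = H$, i.e. $H$ is $G$-invariant. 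Invariance in the $(1,1)$ slot, written out via the invariance criterion of the Cartan model, reads $\xi(\Ad_g u) = \psi_g^* \xi(u)$ for all $g$; differentiating at $g=e$ in a direction $v$ gives $\xi([v,u]) = \Lie_{v_M}\xi(u)$, which is precisely condition (ii) after relabeling. Because $G$ is connected, this infinitesimal statement is equivalent to the global invariance, so $H+\xi\in\Omega^3_G(M) \Leftrightarrow H$ invariant and (ii) holds.

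The remaining step is to compute $d_G(H+\xi)$ from $(d_G P)(u) = d(P(u)) - i_{u_M}(P(u))$. Since $(H+\xi)(u) = H + \xi(u)$ and $dH=0$, I obtain $(d_G(H+\xi))(u) = d\xi(u) - i_{u_M}H - i_{u_M}\xi(u)$. Decomposing in the bigrading, the $(p,q)=(1,2)$ component is the $2$-form $d\xi(u) - i_{u_M}H$, whose vanishing for all $u$ is condition (i), while the $(p,q)=(2,0)$ component is the function $i_{u_M}\xi(u)\in S^2(\frakg^*)$, whose vanishing for all $u$ is condition (iii). Hence $d_G(H+\xi)=0 \Leftrightarrow$ (i) and (iii). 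Combining with the previous paragraph yields the equivalence, and the identification of $\xi$ as the moment one-form is already built into Example \ref{model}.

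I expect the only delicate points to be bookkeeping ones: getting the Cartan-model invariance convention to reproduce condition (ii) with the correct placement of $\Ad_g$ and $\psi_g^*$ (and the sign of $\Lie_{v_M}$), invoking connectedness of $G$ to promote the infinitesimal identity to the global one, and observing that $i_{u_M}\xi(u)$ is a genuine quadratic element of $S^2(\frakg^*)$ so that its vanishing for all $u$ is equivalent to the symmetric-bilinear identity underlying (iii). Everything else is a direct matching of bidegree components.
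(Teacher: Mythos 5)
Your proof is correct and takes essentially the same approach as the paper's: reduce both sides to conditions (i)--(iii) of Example \ref{model}, identify invariance of the degree-$(1,1)$ piece with (ii) by differentiating the equivariance identity (connectedness of $G$ giving the converse), and split $d_G(H+\xi)=0$ by bidegree into (i) and (iii). The only cosmetic differences are that the paper lists the degree-$4$ equation $dH=0$ as a third component (automatically satisfied, since $H$ is closed by hypothesis) where you discard it at the outset, and that your explicit appeal to connectedness of $G$ spells out a step the paper leaves implicit.
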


The next lemma (proven in Lemma 8 of \cite{lin_tolman2}) provides an useful tool to simplify the description of an isotropic $G$-lifted action. In what follows, we will extend a bit our setting by considering an invariant submanifold $N$ of $M$ on which $G$ acts freely; denote by $j: N \Arrow M$ its inclusion. 

\begin{lem}\label{lin_lemma}
Suppose that $G$ acts on $N \subset M$ freely and let $\theta \in \Omega^1(N, \frakg)$ be a connection. Let $H \in \Omega^3(M)$ be a 3-form and $\xi \in S^1(\frakg^*) \otimes \Omega^1(M)$ such that $H+\xi \in \Omega^3_G(M)$. If $\,d_G(H+\xi)=0$, then 
\begin{equation}\label{B_def}
B(X, Y) = \langle \xi(\theta(Y)), \theta(X)_N - X \rangle + \langle \xi(\theta(X)), Y \rangle,
\end{equation}
for $X,\, Y \in T_xN, \, x \in N$, defines an element of $\Omega^2(N)^G$ such that 
$$
i_{u_N} B= j^*\xi(u), \,\, \forall \, u \in \frakg.
$$
In this case, $j^*H+dB=j^*(H+\xi)+ d_G B$ is a basic 3-form on $N$.
\end{lem}

\begin{dfn}{\em
Let $(\mathcal{A}, \chi)$ be an isotropic $G$-lifted action and $\nabla: TM \Arrow E$ an invariant isotropic splitting. We say that $\chi_{\nabla}: \frakg \Arrow \Gamma(\T M)$ is \textit{purely tangent on} $N$ if 
\begin{equation}\label{ad_one}
j^*\xi_{\nabla} = 0.
\end{equation}
In other words,  $\chi_{\nabla}$ is purely tangent on $N$ if it is given by vector fields up to restriction to $N$.
}
\end{dfn}
\vspace{5pt}

\begin{rem}\label{ad_one_prop}\em
Let $\nabla$ be any invariant splitting for which $\chi_{\nabla}$ is purely tangent on $N$ and let $H \in \Omega^3(M)$ be its curvature. One has that
$$
d_G(j^*H) = d_G(j^*(H + \xi_{\nabla})) = 0.
$$
Hence, $j^*H$ is basic on $N$. The existence of such splittings follows directly from Lemma \eqref{lin_lemma}. Indeed, if $\nabla_0$ is any invariant splitting, then $\nabla = \nabla_0 + \widehat{B}$, where $\widehat{B} \in \Omega^2(M)^G$ is any invariant extension of \eqref{B_def} (e.g. obtained by choosing an equivariant tubular neighborhood of $N$), turns  $\chi_{\nabla}$ into a purely tangent action on $N$. 
\end{rem}

\subsection{Reduction of Dirac structures.}

Let $E$ be an exact Courant algebroid over $M$ and $G$ a compact, connected Lie group. At the outset, let us fix an isotropic $G$-lifted action $(\mathcal{A}, \chi)$ on $E$ and suppose we are given an invariant submanifold of $M$, $j:N \Arrow M$,  on which $G$ acts freely. For $x \in N$, define
\begin{equation}\label{isotropic}
K_x= \{\chi(u)(x) \,\, | \,\,  u \in \frakg\} + p^*(\Ann{T_x N}) \subset  E_x
\end{equation}

\begin{lem}\cite{b_c_g2}
$K$ is an equivariant isotropic subbundle of $E|_N$.
\end{lem}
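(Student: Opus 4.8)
We must show $K \subset E|_N$, defined fibrewise by
$$
K_x = \{\chi(u)(x) \mid u \in \frakg\} + p^*(\Ann{T_x N}),
$$
is (a) a subbundle of constant rank, (b) isotropic, and (c) equivariant under the $G$-action. I expect the constant-rank claim to be the genuine obstacle, since the formula presents $K_x$ as a sum of two images that could vary in dimension from point to point.

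For the isotropic claim I would work with a $G$-invariant isotropic splitting $\nabla$ and transport everything to $(\T M, \Cour{\cdot,\cdot}_H)$ via $\Phi_{\nabla}$, using Remark \ref{gen_lifted}. There $\chi(u) = \nabla u_M + p^*\xi_{\nabla}(u)$ by \eqref{moment_nabla}, so $\chi(u)$ corresponds to $u_M + \xi_{\nabla}(u) \in \T M$, while $p^*(\Ann{T_xN})$ corresponds to $\Ann{T_xN} \subset T^*_xM$. To check isotropy it suffices, by bilinearity, to pair the three combinations of generators using \eqref{can_bin}. The pairing of two image elements of $p^*$ vanishes since $\Ann{T_xN}$ is isotropic in $\T M$; the pairing of $u_M + \xi_{\nabla}(u)$ with $\eta \in \Ann{T_xN}$ gives $i_{u_M}\eta$, which vanishes because $u_M$ is tangent to $N$ (as $N$ is invariant) and $\eta$ annihilates $T_xN$; and the pairing of $u_M + \xi_{\nabla}(u)$ with $v_M + \xi_{\nabla}(v)$ equals $i_{u_M}\xi_{\nabla}(v) + i_{v_M}\xi_{\nabla}(u)$, which is the polarization of the condition $\langle \chi(u),\chi(u)\rangle = 0$ defining an \emph{isotropic} lifted action. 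Thus $K_x$ is isotropic.

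For constant rank I would argue that $x \mapsto \{\chi(u)(x)\}$ is a subbundle because $G$ acts \emph{freely} on $N$: freeness forces $u \mapsto u_M(x)$ to be injective on $\frakg$, so $u \mapsto \chi(u)(x)$ has constant rank $\dim\frakg$ (it is injective, since $p(\chi(u)(x)) = u_M(x)$). The bundle $p^*(\Ann{T_xN})$ has constant rank $\mathrm{codim}(N)$. The delicate point is the rank of their \emph{sum}, i.e.\ controlling the intersection of these two subbundles. Here I would note that an element $\chi(u)(x)$ lies in $p^*(\Ann{T_xN})$ exactly when $u_M(x) = p(\chi(u)(x)) = 0$, which by freeness forces $u=0$; hence the two summands meet only in $0$ and the sum is direct, giving constant rank $\dim\frakg + \mathrm{codim}(N)$. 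This is the step that secretly uses freeness twice and is where a careless argument would break down.

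Finally, equivariance follows formally: for $g \in G$ with automorphism $\mathcal{A}_g = (\Psi, \psi_g)$, the homomorphism property gives $\Psi(\chi(u)(x)) = \chi(\Ad_g u)(\psi_g(x))$, so $\Psi$ carries the first summand of $K_x$ onto the first summand of $K_{\psi_g(x)}$; and since $\mathcal{A}_g$ covers $\psi_g$ with $p\circ\Psi = (\psi_g)_*\circ p$ (Definition \ref{sym_def}(3)) and $\psi_g$ preserves $N$, the map $\Psi$ sends $p^*(\Ann{T_xN})$ onto $p^*(\Ann{T_{\psi_g(x)}N})$. Hence $\Psi(K_x) = K_{\psi_g(x)}$, establishing $G$-invariance. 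Assembling (a)--(c) yields that $K$ is an equivariant isotropic subbundle of $E|_N$.
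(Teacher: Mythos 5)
Your isotropy and constant-rank arguments are correct and complete, and they are self-contained, which is more than the paper itself does: the paper offers no proof, referring instead to Proposition 2.3 of \cite{b_c_g2} with the remark that the argument there (written for $N=\mu^{-1}(0)$) carries over verbatim. The polarization of $\langle \chi(u),\chi(u)\rangle=0$, the vanishing of $i_{u_M}\eta$ for $\eta\in\Ann{T_xN}$ (invariance of $N$), and the double use of freeness --- once for injectivity of $u\mapsto\chi(u)(x)$, once for directness of the sum --- are exactly the right points. (Minor remark: isotropy needs no splitting at all, since $\langle e,p^*\eta\rangle=\eta(p(e))$ and $p\circ p^*=0$ hold in any exact Courant algebroid; but your route through an invariant splitting and \eqref{moment_nabla} is also fine.)

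The one genuine gap is in the equivariance of the summand $\{\chi(u)(x)\}$. The identity $\Psi_g(\chi(u)(x))=\chi(\Ad_g u)(\psi_g(x))$ is \emph{not} a formal consequence of $\mathcal{A}$ being a homomorphism. The homomorphism property yields equivariance only at the level of derivations, $\mathcal{A}_g\circ\rho(u)\circ\mathcal{A}_g^{-1}=\rho(\Ad_g u)$; feeding in the factorization $\rho=\ad\circ\chi$ and bracket-preservation of $\mathcal{A}_g$ gives $\ad(\mathcal{A}_g\cdot\chi(u))=\ad(\chi(\Ad_g u))$, so the two sections agree only modulo $\Ker{\ad}$, which for an exact Courant algebroid is the space of closed one-forms $p^*\Omega^1_{cl}(M)$ --- and such a discrepancy need not lie in $K$, so your conclusion would not follow. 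To close the gap you must use the hypothesis your argument never touches: bracket-preservation of $\chi$ itself. It says $\rho(v)\chi(u)=\Cour{\chi(v),\chi(u)}=\chi([v,u])$, which is precisely the infinitesimal form of the desired identity; since $G$ is connected (a standing assumption of the paper, and genuinely needed here), the map $g\mapsto\mathcal{A}_g^{-1}\cdot\chi(\Ad_g u)$ has vanishing derivative along all one-parameter subgroups and hence equals $\chi(u)$ identically. A smaller instance of the same looseness occurs for the other summand: condition (3) of Definition \ref{sym_def} alone only shows $\Psi_g(p^*T^*_xM)\subset p^*T^*_{\psi_g(x)}M$; to identify the image covector as $(\psi_g^{-1})^*\eta$, and hence as an element of $\Ann{T_{\psi_g(x)}N}$, you also need condition (1), since $p^*$ is the adjoint of $p$ with respect to the pairing --- or, more quickly, conjugate by an invariant splitting and invoke Proposition \ref{nabla_symmetries}.
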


For a proof, we refer to Proposition 2.3 in \cite{b_c_g2} (note that in their case, $N = \mu^{-1}(0)$, where $\mu: M \Arrow \frakh^*$ is an equivariant map and $\frakh$ is a $\frakg$-module. With this in mind, the proof is exactly the same). Note that, as $\mathcal{A}_g$ preserves $\langle \cdot, \cdot \rangle$, for all $g \in G$, $K\p$ is also equivariant.

\begin{rem}\label{K_nabla}\em
If we are given an invariant splitting $\nabla$ for which $\chi_{\nabla}$ is purely tangent on $N$, then, by \eqref{moment_nabla},
$$
K= \{\nabla u_M \,\, | \,\, u \in \frakg \}|_N \oplus p^* (\Ann{TN})
$$
\end{rem}
\vspace{5pt}

Using the isotropic subbundle $K$, we can define an exact Courant algebroid over the quotient manifold $M_{red}= N/G$.

\begin{thm}[Bursztyn-Cavalcanti-Gualtieri \cite{b_c_g}]\label{reduction_courant}
Let $E_{red}$ be the bundle over $M_{red}$ defined by
$$
E_{red} = \left.\frac{K\p}{K}\right/G.
$$
The bilinear form $\langle \cdot, \cdot \rangle$, the anchor map $p: E \Arrow TM$ and the bracket $\Cour{\cdot, \cdot}$ on $\Gamma(E)$ induce a bilinear form $\langle \cdot, \cdot \rangle_{red}$, an anchor $p_{red}: E_{red} \Arrow M_{red}$ and a bracket on $\Gamma(E_{red})$ which turn $E_{red}$ into an exact Courant algebroid.
\end{thm}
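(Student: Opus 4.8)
The plan is to build $E_{red}$ in three stages: first the purely linear-algebraic reduction of the quadratic vector bundle over $N$, then the quotient by $G$, and finally the descent of the Courant bracket, which is where the real work lies.

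First I would treat $K\p/K$ as a vector bundle over $N$. Since $K$ is isotropic we have $K \subset K\p$, and because $E$ is exact the pairing is fibrewise non-degenerate, so $(K\p)\p = K$. Consequently $\langle \cdot, \cdot\rangle$ restricted to $K\p$ descends to $K\p/K$ (it annihilates $K$ by definition of $K\p$) and remains non-degenerate there: if $[e]$ pairs trivially with all of $K\p$ then $e \in (K\p)\p = K$, so $[e]=0$. A rank count confirms this is the right object: writing $k = \dim G$, $n = \dim N$, $m = \dim M$, freeness of the action together with Remark \ref{K_nabla} gives $\dim K = k + (m-n)$, hence $\dim(K\p/K) = 2(n-k) = 2\dim M_{red}$. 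For the anchor, the inclusion $p^*(\Ann{TN}) \subset K$ forces $p(K\p) \subset TN$, while $p(K) = \{u_N \mid u \in \frakg\} = \Ker{dq}$; thus $dq \circ p$ vanishes on $K$ and descends to a bundle map $K\p/K \Arrow TM_{red}$.

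Next I would quotient by $G$. As already noted, $K$ and $K\p$ are equivariant and $G$ acts freely on $N$, so $K\p/K$ is a $G$-equivariant vector bundle and $E_{red} = (K\p/K)/G$ is a genuine vector bundle over $M_{red} = N/G$. Because each $\mathcal{A}_g$ preserves $\langle\cdot,\cdot\rangle$ and satisfies $p \circ \mathcal{A}_g = (\psi_g)_* \circ p$, both the reduced pairing and the reduced anchor are $G$-invariant and therefore descend to a non-degenerate form $\langle\cdot,\cdot\rangle_{red}$ and an anchor $p_{red}: E_{red} \Arrow TM_{red}$. A short argument, lifting a tangent vector and correcting it by an element of $p^*T^*M$ using the linear independence of the $u_N$, shows that $p$ maps $K\p$ onto $TN$, whence $p_{red}$ is surjective; the kernel then has rank $\dim M_{red}$ and one identifies it with $p_{red}^*(T^*M_{red})$, yielding exactness of $0 \to T^*M_{red} \to E_{red} \to TM_{red} \to 0$.

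The heart of the matter is the bracket. Sections of $E_{red}$ correspond to $G$-invariant sections of $K\p/K$, which I would represent by $G$-invariant sections $e$ of $E$ (defined near $N$) whose restrictions to $N$ take values in $K\p$; call these \emph{admissible lifts}. I would set $\Cour{[e_1],[e_2]}_{red} := [\,\Cour{e_1,e_2}|_N\,]$ and prove two things: (a) $\Cour{e_1,e_2}|_N \in \Gamma(K\p)$ for admissible invariant lifts, and (b) if moreover $e_1|_N \in \Gamma(K)$ then $\Cour{e_1,e_2}|_N \in \Gamma(K)$, so the class modulo $K$ is independent of the lift. Both statements reduce to bracketing against the generators of $K$: for the $\chi(u)$ part one uses the factorization $\rho = \ad \circ \chi$ of Definition \ref{lifted_def}, since $\Cour{\chi(u), e} = \rho(u)(e)$ vanishes on invariant $e$, while isotropy of $\chi$ controls $\langle\Cour{e_1,e_2}, \chi(u)\rangle$ through the compatibility of the bracket with the pairing; for the $p^*(\Ann{TN})$ part one uses the exact-Courant identities for $\Cour{e, p^*\xi}$ and $\Cour{p^*\xi, e}$ to see that the result stays in $p^*(\Ann{TN}) \subset K$ along $N$. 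I expect step (b) to be the main obstacle: since $K$ is a subbundle only over $N$, one must extend sections off $N$, bracket them in $E$, and verify that the restriction to $N$ is independent of the extension; this extension-independence, together with the existence of enough admissible invariant lifts, is exactly where the structure of $K$ and the properties of $\chi$ must be used carefully. Once (a) and (b) hold, the requisite $C^\infty(M_{red})$-linearity and the Leibniz identity are inherited slot-by-slot from $E$, and the Courant algebroid axioms for $(E_{red}, \langle\cdot,\cdot\rangle_{red}, p_{red}, \Cour{\cdot,\cdot}_{red})$ follow because the assignment $e \mapsto [e|_N]$ is, by construction, a bracket homomorphism. For explicit verification I would, if needed, fix an $(\mathcal{A},\chi,N)$-admissible splitting and compute with the $H$-twisted bracket \eqref{H_twisted}, using the concrete description of $K$ in Remark \ref{K_nabla}.
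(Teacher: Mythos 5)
The first thing to note is that the paper itself contains no proof of this statement: Theorem \ref{reduction_courant} is imported verbatim from Bursztyn--Cavalcanti--Gualtieri \cite{b_c_g}, so there is no in-paper argument to compare yours against. Judged on its own merits, your outline is correct, and it is essentially the strategy of \cite{b_c_g}: fibrewise reduction of the quadratic form on $K\p/K$ (using $(K\p)\p = K$ and the rank count $\dim K = \dim G + \mathrm{codim}\, N$, which follows from freeness since $p(\chi(u)) = u_M$), descent through the free action of the compact group $G$, and a bracket defined on invariant lifts restricting into $K\p$, with well-definedness reduced to your statements (a) and (b). The mechanisms you cite are the right ones: the compatibility of $\Cour{\cdot,\cdot}$ with $\langle\cdot,\cdot\rangle$ handles orthogonality to the $\chi(u)$ (the key point being that $\langle e_i, \chi(u)\rangle$ vanishes on $N$ and is only ever differentiated along directions tangent to $N$, since $p(K\p)\subset TN$); the identity $\Cour{p^*\xi, e} = -i_{p(e)}d\xi$ in a splitting handles the $p^*\Ann{TN}$ generators; and $\Cour{\chi(u), e} = \rho(u)(e) = 0$ on invariant sections, via the factorization $\rho = \ad\circ\chi$, handles the perturbation of the lift by sections of $K$.

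Two points in your sketch should be made explicit to close the argument, though neither is an obstruction. First, your (b) controls only the first slot of the bracket; independence in the second slot follows from (b) together with the symmetrization identity $\Cour{e_1,e_2} + \Cour{e_2,e_1} = p^*d\langle e_1,e_2\rangle$ and the observation that $\langle e_1,e_2\rangle$ vanishes on $N$ whenever $e_1|_N \in \Gamma(K)$ and $e_2|_N \in \Gamma(K\p)$, so that $p^*d\langle e_1,e_2\rangle|_N \in p^*\Ann{TN} \subset K$. Second, the supply of invariant admissible lifts is where compactness of $G$ enters: extend a lift off $N$ using an equivariant tubular neighborhood and then average over $G$; since $K\p$ is an invariant subbundle, the averaged section still restricts into $K\p$. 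With these two remarks inserted, your plan is a complete proof along the lines of the original one in \cite{b_c_g}.
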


It is exactly inside $E_{red}$ where the reduced Dirac structures will be found.

\begin{thm}[Bursztyn-Cavalcanti-Gualtieri \cite{b_c_g}]\label{reduction_Dirac}
Let $L \subset E$ be an invariant Dirac structure.  If $L|_{N} \cap K$ has constant rank, then
\begin{equation}\label{reduced_dirac}
L_{red} = \left. \frac{L|_N\cap K\p + K}{K} \right /G \subset E_{red}
\end{equation}
defines a Dirac structure.
\end{thm}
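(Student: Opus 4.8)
The plan is to establish the two defining properties of a Dirac structure separately: first that $L_{red}$ is a Lagrangian subbundle of $E_{red}$, and then that it is integrable. The Lagrangian property is essentially pointwise linear algebra. Fixing $x \in N$ and working inside the quadratic space $(E_x, \langle\cdot,\cdot\rangle)$ with the isotropic subspace $K_x \subset K_x\p$, the induced form on $K_x\p/K_x$ is non-degenerate, and I would show that $W_x := (L_x \cap K_x\p + K_x)/K_x$ is Lagrangian by a direct self-orthogonality computation. The key algebraic identities are $(L_x \cap K_x\p)\p = L_x\p + K_x = L_x + K_x$ (using that $L$ is Lagrangian and $(K\p)\p = K$) together with the modular-type equality $(L_x + K_x)\cap K_x\p = (L_x \cap K_x\p) + K_x$; the latter holds because any $\ell + k \in K_x\p$ with $\ell \in L_x$, $k \in K_x$ forces $\ell = (\ell+k) - k \in K_x\p$. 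Combining these gives $(W_x)\p = W_x$ inside $K_x\p/K_x$, so $W_x$ is Lagrangian.

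Next I would promote this to the bundle level. The hypothesis that $L|_N \cap K$ has constant rank is exactly what guarantees smoothness: from $(L\cap K\p)\p = L + K$ one gets $\dim(L_x \cap K_x\p) = \dim E_x - \dim(L_x + K_x) = \dim M - \dim K_x + \dim(L_x\cap K_x)$, which is constant, and since $K \subset K\p$ one has $L\cap K\p\cap K = L \cap K$, so $L\cap K\p + K$ has constant rank as well. Hence $L|_N \cap K\p + K$ is a smooth subbundle of $K\p$. Because $L$ is invariant and $K$, $K\p$ are equivariant, this subbundle is $G$-invariant, and as $G$ acts freely on $N$ it descends to a subbundle $L_{red}$ of $E_{red} = (K\p/K)/G$ over $M_{red} = N/G$; by the fibrewise computation it is Lagrangian.

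For integrability I would use the description of the reduced bracket from Theorem \ref{reduction_courant}: a section of $E_{red}$ is represented by a $G$-invariant section $e \in \Gamma(E)$ with $e|_N \in \Gamma(K\p)$, and the induced bracket is computed as $\Cour{\bar e_1, \bar e_2}_{red} = \overline{\,\Cour{e_1,e_2}|_N\,}$, the Courant bracket of such admissible sections again restricting into $K\p$ along $N$. I would first check that sections of $L_{red}$ are generated, as a $C^\infty(M_{red})$-module, by classes of invariant sections $\ell \in \Gamma(L)$ whose restriction to $N$ lies in $L\cap K\p$: given a local section of the invariant subbundle $L\cap K\p$ over $N$, one extends it to a section of $L$ on a neighborhood in $M$ and averages over the compact group $G$ to make it invariant without altering its restriction to $N$. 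For two such representatives $\ell_1, \ell_2$, integrability of $L$ gives $\Cour{\ell_1,\ell_2} \in \Gamma(L)$; this bracket is again invariant and, by the reduced-bracket machinery, restricts into $K\p$ along $N$, so $\Cour{\ell_1,\ell_2}|_N \in \Gamma(L\cap K\p)$ and its class lies in $L_{red}$. Finally, because $L_{red}$ is isotropic the symmetric (anchor) term in the Leibniz rule for the Courant bracket drops out, so bracket-closure on a generating set of sections suffices to conclude $\Cour{\Gamma(L_{red}),\Gamma(L_{red})}_{red} \subset \Gamma(L_{red})$.

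The main obstacle I anticipate is not the linear algebra but the careful interface with the reduction machinery of Theorem \ref{reduction_courant}: one must ensure that the chosen representatives $\ell_i$ are genuinely admissible in the sense required to apply the reduced-bracket formula (invariant, defined on a neighborhood of $N$, and restricting into $K\p$), and that the extension-and-averaging step produces enough such sections to generate $L_{red}$ fibrewise. Everything else then follows from the integrability of $L$ together with the already-established fact that the Courant bracket of admissible sections of $K\p$ stays in $K\p$ along $N$.
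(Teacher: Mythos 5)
The paper contains no proof of this theorem: it is quoted verbatim from \cite{b_c_g}, so there is no in-paper argument to compare against, and your proposal must be judged as a reconstruction of the cited reference's proof. As such it is correct, and it is essentially that standard argument: the identities $(L_x\cap K_x\p)\p=L_x+K_x$ and the modular equality $(L_x+K_x)\cap K_x\p=L_x\cap K_x\p+K_x$ give fibrewise self-orthogonality, the constant-rank hypothesis on $L|_N\cap K$ (together with $L\cap K\p\cap K=L\cap K$) gives smoothness and descent, and integrability follows by closing the bracket on invariant representatives and invoking isotropy to kill the anomalous $p^*df$ term in the Leibniz rule.

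Two points deserve to be made explicit. First, your averaging step leaves the restriction to $N$ unchanged only because the section being extended is already $G$-invariant along $N$ (i.e., defined over an invariant open set $q^{-1}(\mathcal{U})$ and invariant there); for a non-invariant local section over $N$, averaging the extension would alter its boundary values, so the generating set should be phrased in terms of invariant sections over invariant opens from the start. Second, to know that such classes actually generate $\Gamma(L_{red})$, one must pass from an invariant section of $(L|_N\cap K\p+K)/K$ to an invariant section of $L|_N\cap K\p$ representing it; since $(L|_N\cap K\p)\cap K=L|_N\cap K$ can be nonzero, this requires choosing a $G$-invariant complement of $L|_N\cap K$ inside $K$ (available by compactness of $G$), so that invariant sections of the sum decompose invariantly. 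Both are routine repairs, and with them your argument is complete modulo the black-box use of Theorem \ref{reduction_courant}, which is exactly how the present paper treats that result as well.
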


The main purpose of this work is to understand the relation between the pure spinor line bundles of $L$ and $L_{red}$. For this, we shall need a better description of $L_{red}$. First, let us describe the anchor $p_{red}$ and the bilinear form $\langle \cdot, \cdot \rangle_{red}$. Let $q: N \Arrow M_{red}$ be the quotient map and $x \in N$. For $k\p \in K\p_x$, let $[k\p+K]$ denote its $G$-orbit in $E_{red}|_{q(x)}$. For $k_1\p, k_2\p \in K\p_x$ and $\eta \in T_{q(x)}^*M_{red}$, one has
\begin{equation}\label{g_red}
\langle [k_1\p+K], [k_2\p +K] \rangle_{red} = \langle k_1\p, k_2\p \rangle
\end{equation}
and
\begin{equation}\label{p_red}
p_{red}^* \,\eta = [p^* dq_x^* \eta + K].
\end{equation}

\begin{rem}\label{abuse}{\em
For \eqref{p_red} to make sense, one should choose a right splitting for
$$
0 \lrrow \Ann{T_xN} \hookrightarrow \ T_x^*M \stackrel{dj_x^*} \lrrow T_x^*N \lrrow 0
$$
in order to consider $dq^*_x \eta$ as an element of $T_x^*M$. Note that \eqref{p_red} does not depend on this choice, as any two splittings differ by an element of $\Ann{T_xN}$ and $p^*\Ann{T_xN} \subset K$. In the following, 
we shall implicitly assume such a right splitting is chosen.
}
\end{rem}

Fix an invariant splitting $\nabla$ for which $\chi_{\nabla}$ is purely tangent on $N$. We claim that $\nabla(TN) \subset K\p$. Indeed, let $X \in TN$ and $\xi_{\nabla}: \frakg \Arrow \Omega^1(M)$ be the moment one-form of $(\mathcal{A}_{\nabla}, \chi_{\nabla})$. A general element $k \in K$ has the form (see \eqref{isotropic}),
$$
k= \nabla u_M + p^*(\xi_{\nabla}(u) +\eta),
$$
for $u\in \frakg$ and $\eta \in \Ann{TN}$. Hence,
$$
\langle \nabla X , k   \rangle  = \langle \nabla X , \nabla u_M + p^*(\xi_{\nabla}(u) +\eta) \rangle = i_X (\xi_{\nabla}(u) + \eta) = 0,
$$
as $j^*(\xi_{\nabla}(u) +\eta) =0$. This proves that $\nabla X \in K\p$. 

\begin{lem}
Let $\nabla$ be any invariant splitting and let $\theta \in \Omega^1(N, \frakg)$ be a connection on $N$. Define $\nabla_{red}: TM_{red} \Arrow E_{red}$ by
\begin{equation}\label{nabla_red}
\nabla_{red} \,dq(X) = [\nabla X + p^*i_XB + K], \, X \in TN,
\end{equation}
where $B \in \Omega^2(N)$ is given by \eqref{B_def} for $\xi=\xi_{\nabla}$. One has that $\nabla_{red}$ is an isotropic splitting for $E_{red}$. Moreover, its curvature $H_{red} \in \Omega^3(M_{red})$ is determined by
\begin{equation}\label{reduced_curvature}
q^*H_{red} = j^*H + dB,
\end{equation}
where $H$ is the curvature of $\nabla$.
\end{lem}

\begin{proof}
First of all, note that $\nabla_{red}$ is well-defined. Indeed, as
$$
\nabla X + p^*i_XB + K= (\nabla + \widehat{B})X + K,
$$
for any invariant extension $\widehat{B} \in \Omega^2(M)$ of $B$,  the result follows from Remark \ref{ad_one_prop}. Now, we have to prove that $p_{red} \circ \nabla_{red} = id$ and $\nabla_{red}(TM_{red})$ is a Lagrangian subbundle of $E_{red}$. For this, let $x \in N$, $X\in T_{x}N$ and $\eta \in T_{q(x)}^*M_{red}$,
$$
\begin{array}{rl}
\eta(dq_x(X)) = & \langle p^* dq_x^* \eta , \nabla X + p^*i_XB \rangle \vspace{3pt}\\
= & \langle [p^*dq_x^* \eta +K], [\nabla X + p^*i_XB + K] \rangle_{red}\vspace{3pt} \\
= & \langle p_{red}^* \eta, \nabla_{red} \,dq_x (X) \rangle \vspace{3pt}\\
= & \eta(p_{red} \circ \nabla_{red} \, dq_x(X)) .
\end{array}
$$
As $\eta$ and $X$ are arbitrary, it follows that $\nabla_{red} \circ p_{red}=id$. The fact that $\nabla_{red}(TM_{red})$ is isotropic is a direct consequence of \eqref{g_red} and \eqref{nabla_red}. For a proof of the last statement, see Proposition 3.6 and the discussion after Proposition 3.8 in \cite{b_c_g}.
\end{proof}

We are now able to give an alternative characterization of $L_{red}$ for $L \subset E$ an invariant Dirac structure. As before, fix an invariant splitting $\nabla$, a connection $\theta \in \Omega^1(N, \frakg)$ and let $B \in \Omega^2(N)$ be defined as \eqref{B_def} for $\xi=\xi_{\nabla}$. For $x \in N$, define 
\begin{equation}\label{res_dirac}
\mathfrak{B}j(L_x) = \{X + dj_x^*\beta - i_X B \in \T_x N \,\, | \,\, \nabla X + p^* \beta \in L_x\}.
\end{equation}

\begin{prop}\label{L_char}
Let $x \in N$ and  $Y+ \eta \in \T_{q(x)} M_{red}$. One has that
$$
\nabla_{red} Y + p^*_{red}\, \eta \in L_{red}|_{q(x)} \Longleftrightarrow \exists\, X\in T_xN \text{ s.t. } 
\left\{ 
\begin{array}{l}
Y = dq_x(X);\\
X+ dq_x^* \eta \in \mathfrak{B}j(L_x).
\end{array}\right.
$$
\end{prop}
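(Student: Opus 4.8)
The plan is to reduce the claimed equivalence, which lives over $M_{red}$, to a single explicit membership statement in the fiber over $q(x)$, and then verify the two implications by direct linear algebra in the admissible splitting $\nabla$. Since $G$ acts freely on $N$, the fiber $E_{red}|_{q(x)}$ is canonically $K\p_x/K_x$. Choosing any lift $X_0\in T_xN$ of $Y$ (which exists because $dq_x$ is onto), formulas \eqref{nabla_red} and \eqref{p_red} give
$$
\nabla_{red} Y + p^*_{red}\,\eta = [\nabla X_0 + p^*\,dq_x^*\eta + K],
$$
where $dq_x^*\eta$ is the $T_x^*M$-extension of Remark \ref{abuse}. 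By \eqref{reduced_dirac}, membership in $L_{red}|_{q(x)}$ is then exactly the condition
$$
\nabla X_0 + p^*\,dq_x^*\eta \ \in\ L_x\cap K\p_x + K_x,
$$
so the whole proposition amounts to showing this is equivalent to the existence of $X\in T_xN$ with $Y=dq_x(X)$ and $X+dq_x^*\eta\in\mathfrak{B}j(L_x)$.

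Next I would make explicit the two objects governing the bookkeeping. Using Remark \ref{K_nabla}, a general element of $K_x$ is $\nabla v_M + p^*\gamma$ with $v\in\frakg$ and $\gamma\in\Ann{T_xN}$; pairing it against $\nabla Z+p^*\mu$ via \eqref{can_bin} (so $\langle\nabla Z,\nabla v_M\rangle=0$, $\langle\nabla Z,p^*\gamma\rangle=\gamma(Z)$, $\langle p^*\mu,\nabla v_M\rangle=\mu(v_M)$) I read off
$$
K\p_x=\{\nabla Z + p^*\mu \ \mid\ Z\in T_xN,\ \mu|_{\frakg\cdot x}=0\}.
$$
The crucial observation, which explains why $K\p$ never appears on the right-hand side, is that any $\nabla X+p^*\beta$ with $X\in T_xN$ and $\beta$ vanishing on the orbit directions is automatically in $K\p_x$; and whenever $dj_x^*\beta=dq_x^*\eta$ the covector $\beta$ does vanish on $\frakg\cdot x$, since $dq_x$ kills the vertical directions. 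Thus the clean-intersection-with-$K\p$ comes for free once the covector part restricts correctly to $N$.

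For $(\Leftarrow)$ I would take $X$ and $\beta\in T_x^*M$ with $dj_x^*\beta=dq_x^*\eta$ and $\nabla X+p^*\beta\in L_x$, which is the content of $X+dq_x^*\eta\in\mathfrak{B}j(L_x)$ by \eqref{res_dirac}. By the observation above $\nabla X+p^*\beta\in L_x\cap K\p_x$, so it represents a class in $L_{red}|_{q(x)}$; and since $\beta$ and the extension $dq_x^*\eta$ agree on $T_xN$, their difference lies in $\Ann{T_xN}$, whence $p^*(\beta-dq_x^*\eta)\in K_x$ and the class coincides with $\nabla_{red}Y+p^*_{red}\eta$. For $(\Rightarrow)$ I would start from the membership above and write $\nabla X_0+p^*dq_x^*\eta=\ell+k$ with $\ell\in L_x\cap K\p_x$ and $k=\nabla v_M+p^*\gamma\in K_x$, then set $X=X_0-v_M\in T_xN$ and $\beta=dq_x^*\eta-\gamma$. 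A short check gives $dq_x(X)=Y$ (the shift is vertical), $\nabla X+p^*\beta=\ell\in L_x$, and $dj_x^*\beta=dq_x^*\eta$ (as $\gamma\in\Ann{T_xN}$), i.e. $X+dq_x^*\eta\in\mathfrak{B}j(L_x)$.

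The main obstacle is not any single deep step but the consistent handling of the two roles played by $dq_x^*\eta$: as a genuine covector in $T_x^*N$ inside $\mathfrak{B}j(L_x)$, and as its $T_x^*M$-lift used in \eqref{p_red}. Paired with this is the verification that passing to classes modulo $K_x$ exactly absorbs the two ambiguities present on the right-hand side, namely the freedom in lifting $Y$ (vertical vectors) and in lifting the covector (elements of $\Ann{T_xN}$). Keeping these identifications straight, while using admissibility of $\nabla$ only through Remark \ref{K_nabla} and the resulting description of $K\p_x$, is where care is required.
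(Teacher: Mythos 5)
Your proposal is correct and follows essentially the same route as the paper's proof: reduce membership in $L_{red}|_{q(x)}$ to the condition $\nabla X_0 + p^*dq_x^*\eta \in L_x\cap K_x\p + K_x$, then trade elements of $K_x$ (in the form $\nabla v_M + p^*\gamma$, $\gamma\in\Ann{T_xN}$, given by Remark \ref{K_nabla} and admissibility) against shifts of the lift $X_0$ and of the covector extension. Your write-up is in fact slightly more complete than the paper's, which only spells out the forward direction and leaves implicit the observation that $\nabla X + p^*\beta$ lies automatically in $K\p_x$ whenever $X\in T_xN$ and $dj_x^*\beta = dq_x^*\eta$.
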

\vspace{2pt}
\begin{proof}
The proof follows from a straightforward application of the relevant definitions. Let $X'\in T_xN$ such that $dq_x(X') = Y$. From the definitions of $p_{red}$ \eqref{p_red} and $\nabla_{red}$ \eqref{nabla_red}, one has
$$
\nabla_{red} Y + p^*_{red} \,\eta  = [\underbrace{\nabla X' + p^*(i_{X'}B + dq_x^* \eta)}_{k^{\perp}} + K].
$$
From the definition of $L_{red}$ \eqref{reduced_dirac}, it follows that
$$
\nabla_{red} Y + p^*_{red} \,\eta \in L_{red}|_{q(x)} \Longleftrightarrow \exists\, k \in K_x \text{ such that } k^{\perp} + k \in L_x.
$$
By Remark \ref{K_nabla}, $k= \nabla u_M|_x + p^*(i_X B + \alpha)$, for some $u\in \frakg$ and $\alpha \in \Ann{T_xN}$ (because $\chi_{\nabla + B}$ is purely tangent on $N$). Define $X= X'+u_N(x) \in T_xN$ and $\beta = i_XB + dq_x^*\eta + \alpha \in T_x^*M$. Then,
$$
\nabla X + p^* \beta=k^{\perp}+ k  \in L_x.
$$
Now, observe that $dq_x(X) = dq_x(X') = Y$. Also, as  $\alpha$  belong to $\Ann{T_xN}$, one has $dj_x^* \beta - i_XB = dq_x^*\eta$ as we wanted to prove.
\end{proof}

\begin{rem}{\em
Proposition \ref{L_char} allows us to relate the reduction procedure of \cite{b_c_g} with the one developed by P. Xu and M. Stienon in \cite{stie_xu}. Indeed, using an invariant splitting $\nabla$ for which $\chi_{\nabla}$ is purely tangent on $N$ to identify $E$ with $\T M$ and $E_{red}$ with $\T M_{red}$, Proposition \ref{L_char} shows that $L_{red}$ is the same reduced Dirac structure as defined in \cite{stie_xu}.
}
\end{rem}

We finish this subsection by providing an interpretation of Proposition \ref{L_char} using the notion of forward Dirac map.

\begin{dfn}\cite{bursztyn_radko} {\em
Let $M_1, M_2$ be manifolds and $H_i \in\Omega^3(M_i), \,i=1,2,$ be closed 3-forms. Let $L_i \in (\T M_i, \Cour{\cdot, \cdot}_{H_i}), i=1,2,$ be Dirac structures. A map $f: (M_1, H_1,L_1) \Arrow (M_2, H_2, L_2)$ is said to be \textit{forward Dirac map} if , for $x\in M_1$,
$$
L_2|_{f(x)} = \{df_x(X)+ \eta \in \T_{f(x)}M_2 \,\, | \,\, X+ df_x^*\eta \in L_1|_x\}.
$$
We further assume that $f^*H_2= H_1$.
}
\end{dfn}

Given $L \subset E$ an invariant Dirac structure, we choose an invariant splitting $\nabla$ for which $\chi_{\nabla}$ is purely tangent on $N$ and let $H\in \Omega^3(M)$ be its curvature. If $\mathfrak{B}j(L_x)$ is smooth as a vector bundle, it defines a Dirac structure with respect to the $j^*H$-twisted Courant (see  \cite{courant}). It is called the \textit{restriction of $L$ to $N$}. In this case, Proposition \ref{L_char} says that
$$
q: (N, j^*H, \mathfrak{B}j(L)) \lrrow (M_{red}, H_{red}, \Phi_{\nabla_{red}}(L_{red})) 
$$
is forward Dirac, where $H_{red} \in \Omega^3(M_{red})$ is the curvature of $\nabla_{red}$ and $\Phi_{\nabla_{red}} : E_{red} \Arrow \T M_{red}$ is the identification \eqref{fi_nabla}.

\section{Reduction via pure spinors.}

Let $E$ be an exact Courant algebroid over $M$ and $G$ a compact, connected Lie group acting on $M$. As usual, we consider an invariant submanifold $N\subset M$ on which $G$ acts freely and an isotropic $G$-lifted action $(\mathcal{A}, \chi)$ on $E$ with inclusion $j: N\Arrow M$ and quotient map $q: N \Arrow M_{red}$. Let $L \subset E$ be an invariant Dirac structure. Throughout this section, we fix this reduction setting and proceed to investigate the relation between the pure spinor line bundles of $L$ and $L_{red}$.

\subsection{Interlude.}

\subsubsection{Linear algebra}

In the following, let us fix an invariant splitting $\nabla$, a connection $\theta \in \Omega^1(N, \frakg)$ and let $B \in \Omega^2(N)$ be the 2-form \eqref{B_def} for $\xi=\xi_{\nabla}$. We begin by giving a more detailed description of $U_{\nabla}(L_x)$. Let $\mathcal{S}_x \subset T_xM$ be the image of $L_x$ under the anchor map $p$. Define $\omega_{\mathcal{S}} \in \wedge^2  \mathcal{S}^*$ by
\begin{equation}\label{omega_S}
\omega_{\mathcal{S}}(X,Y) = \xi(Y),
\end{equation}
where $\xi \in T_x^*M$ is such that $\nabla X + p^*\xi \in L_x$. The fact that $L$ is isotropic implies that $\omega$ is antisymmetric.

\begin{rem}{\em
Definition \eqref{omega_S} does not depend on the choice of $\xi$. This follows from the fact that, for every $p^*\eta \in L_x \cap p^*(T_x^*M)$, 
$$
\eta (Y) = 0 , \,\, \forall \,Y \in \mathcal{S}_x.
$$
}
\end{rem}

\begin{prop}\cite{chevalley}\label{ps_description}
Let $0\neq \Omega \in \det(\Ann{\mathcal{S}_x}) \subset \Ext{T_x^*M}$. One has that
\begin{equation}\label{ps_eq}
\varphi_x= e^{-\omega}\wedge \Omega
\end{equation}
is a non-zero generator of $U_{\nabla}(L_x)$, where $\omega \in \wedge^2 T_x^*M$ is any extension of $\omega_{\mathcal{S}}$.
\end{prop}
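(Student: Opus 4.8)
The plan is to reduce the statement to the standard Clifford module over $T_xM\oplus T_x^*M$ and then to pass from the ``diagonal'' Lagrangian $\mathcal{S}_x\oplus\Ann{\mathcal{S}_x}$, whose pure spinor is already known, to the general $L_x$ by a $B$-field transformation, keeping careful track of its effect at the spinor level.

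First I would trivialize the situation using the splitting. Writing $\Phi_\nabla(e)=p(e)+s_\nabla(e)$, the operator $\Pi_\nabla(e)$ acts on $\Ext{T_x^*M}$ by $\alpha\mapsto i_{p(e)}\alpha+s_\nabla(e)\wedge\alpha$, i.e.\ by the standard action of $T_xM\oplus T_x^*M$. Under $\Phi_\nabla$ the subspace $L_x$ becomes Lagrangian with $\pr_{T_xM}(L_x)=\mathcal{S}_x$; since it is Lagrangian one has $L_x\cap T_x^*M=\Ann{\mathcal{S}_x}$, and a dimension count then yields the description $L_x=\{X+\xi:\ X\in\mathcal{S}_x,\ \xi|_{\mathcal{S}_x}=i_X\omega_{\mathcal{S}}\}$, with $\omega_{\mathcal{S}}$ exactly the shear recorded in \eqref{omega_S}.

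Next I would settle the untwisted model $L_{\mathcal{S}}:=\mathcal{S}_x\oplus\Ann{\mathcal{S}_x}$, the linear version of Example \ref{distributions}. For $0\neq\Omega\in\det(\Ann{\mathcal{S}_x})$ one checks directly that $i_X\Omega=0$ for $X\in\mathcal{S}_x$ (each factor of $\Omega$ annihilates $X$) and $\alpha\wedge\Omega=0$ for $\alpha\in\Ann{\mathcal{S}_x}$ (top degree), so $\Pi_\nabla(e)\Omega=0$ for all $e\in L_{\mathcal{S}}$; this is the pointwise form of Example \ref{delta_spin}. The bridge to the general case is the spinorial avatar of the $B$-field transform with $B=\omega$: a direct computation using $i_X e^{-\omega}=-(i_X\omega)\wedge e^{-\omega}$ (equivalently, \eqref{B_rep}) gives the intertwining
$$
\Pi_\nabla(e)\,(e^{-\omega}\wedge\alpha)=e^{-\omega}\wedge\Pi_\nabla(\tau_{-\omega}(e))\,\alpha,
$$
valid for every $\alpha\in\Ext{T_x^*M}$, where $\tau_{-\omega}$ is the $B$-field transform \eqref{B_field}. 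On the other hand, for $X+\xi\in L_x$ one has $(\xi-i_X\omega)|_{\mathcal{S}_x}=i_X\omega_{\mathcal{S}}-i_X\omega_{\mathcal{S}}=0$, so $\tau_{-\omega}(L_x)=L_{\mathcal{S}}$ for \emph{any} extension $\omega$ of $\omega_{\mathcal{S}}$.

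Combining these, for $e\in L_x$ the intertwining gives $\Pi_\nabla(e)(e^{-\omega}\wedge\Omega)=e^{-\omega}\wedge\Pi_\nabla(\tau_{-\omega}(e))\Omega=0$, so $\varphi_x=e^{-\omega}\wedge\Omega\in U_\nabla(L_x)$; and since $e^{-\omega}\wedge(\cdot)$ is invertible and $\Omega\neq0$, we have $\varphi_x\neq0$. As the pure spinor line $U_\nabla(L_x)$ is one-dimensional (Chevalley \cite{chevalley}), $\varphi_x$ generates it. The two Clifford computations (the base case and the intertwining) are routine; the only real care is in the sign conventions, so that it is $\tau_{-\omega}$ rather than $\tau_{+\omega}$ that straightens $L_x$ to $L_{\mathcal{S}}$, and in remembering that the passage from ``element'' to ``generator'' rests on the a priori one-dimensionality of $U_\nabla(L_x)$.
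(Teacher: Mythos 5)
Your proof is correct, but there is nothing in the paper to compare it against: the paper states this proposition with only a citation to \cite{chevalley} and gives no argument of its own, so your write-up actually supplies a proof where the paper has none. Your route --- identify $E_x\cong T_xM\oplus T_x^*M$ via $\Phi_\nabla$, describe $L_x=\{X+\xi:\ X\in\mathcal{S}_x,\ \xi|_{\mathcal{S}_x}=i_X\omega_{\mathcal{S}}\}$ (using $L_x\cap T_x^*M=\Ann{\mathcal{S}_x}$, which is the paper's \eqref{Ann_S}), verify the ``split'' case $L_{\mathcal{S}}=\mathcal{S}_x\oplus\Ann{\mathcal{S}_x}$ directly on $\Omega$, and then transport via the intertwining $\Pi_\nabla(e)(e^{-\omega}\wedge\alpha)=e^{-\omega}\wedge\Pi_\nabla(\tau_{-\omega}(e))\alpha$ together with $\tau_{-\omega}(L_x)\subseteq L_{\mathcal{S}}$ --- is the standard Chevalley/Gualtieri argument, and each computation (the derivation property $i_Xe^{-\omega}=-(i_X\omega)\wedge e^{-\omega}$, the sign giving $\tau_{-\omega}$ rather than $\tau_{+\omega}$, the independence of the choice of extension $\omega$) checks out. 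The one external ingredient you invoke, the one-dimensionality of $U_\nabla(L_x)$ needed to upgrade ``nonzero element'' to ``generator,'' is legitimate here since the paper has already recorded (with the same citation to \cite{chevalley}) that $U_\nabla(L)$ is a line bundle. Note also that for the final step you only need the inclusion $\tau_{-\omega}(L_x)\subseteq L_{\mathcal{S}}$, which you prove; the equality follows anyway since both are Lagrangian and $\tau_{-\omega}$ is invertible.
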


Note that as $L$ is isotropic, we have
\begin{equation}\label{Ann_S}
\Ann{\mathcal{S}_x} = L_x \cap p^*(T_x^*M).
\end{equation}

In the covariant case, there is a analogous description of $\op{U}_{\nabla}(L_x)$ obtained by exchanging $T_x^*M$ with $T_xM$. In this case, the covariant pure spinor line of $L_x$ is generated by
$ e^{-\pi} \wedge \mathfrak{X}$, where $\pi \in \wedge^2 T_xM$ is a bivector and $0 \neq \mathfrak{X} \in \det(L_x \cap \nabla T_x M)$. 

We are now able to relate the pure spinor line bundles of $L$ and $L_{red}$ at the linear algebra level. Recall the isotropic subbundle $K \subset E|_N$ \eqref{isotropic} associated to the isotropic $G$-lifted action $(\mathcal{A}, \chi)$. 

\begin{thm}\label{main_linear}
 For $\varphi_x \in U_{\nabla}(L_x)$,
\begin{equation}\label{main_transv}
\op{\varpi}_x := dq_x \circ \mathcal{F}_{\mathfrak{p}} (e^B \wedge dj_x^* \varphi_x) \neq 0 \Leftrightarrow L_x\cap K_x = 0,
\end{equation}
where $\mathcal{F}_{\mathfrak{p}}: \Ext{T_x^*N} \Arrow \Ext{T_xN}$ is the star map \eqref{star_op} corresponding to $\mathfrak{p} \in \det(T_xN)$. In this case, $\op{\varpi}_x$ is a generator of the covariant pure spinor line $\op{U}_{\nabla_{red}}(L_{red}|_{q(x)})$.
\end{thm}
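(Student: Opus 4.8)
The plan is to read $\op{\theta}_x$ as a chain of three pure-spinor operations and to track, at each stage, both the annihilator and whether the spinor survives. Using the admissible splitting $\nabla$ to identify $E_x$ with $\T_xM$ and $E_{red}|_{q(x)}$ with $\T_{q(x)}M_{red}$, Remark \ref{K_nabla} gives $K_x = \{\nabla u_M(x) \,|\, u\in\frakg\}\oplus p^*\Ann{T_xN}$; the outer map $dj_x^*$ becomes restriction of forms to $N$, the middle map $\mathcal{F}_{\mathfrak{p}}$ is the isomorphism \eqref{star_op}, and $dq_x$ is push-forward of multivectors along the quotient. First I would record the behaviour of pure spinors under the two outer maps: when nonzero, $dj_x^*\varphi_x$ is a contravariant pure spinor whose annihilator is the pull-back $\mathfrak{B}j(L_x)$ (checked directly from the decomposition below, or by the standard behaviour of spinors under pull-back), while $dq_x$ carries covariant pure spinors to covariant pure spinors of the pushed-forward Lagrangian.

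For the transversality equivalence \eqref{main_transv}, I would write $\varphi_x = e^{-\omega}\wedge\Omega$ with $0\neq\Omega\in\det\Ann{\mathcal{S}_x}$ by Proposition \ref{ps_description}. Since $e^{-j^*\omega}$ is a unit in $\Ext{T_x^*N}$, we have $dj_x^*\varphi_x\neq 0$ iff $dj_x^*\Omega\neq 0$, and the latter holds iff $\Ann{\mathcal{S}_x}\cap\Ann{T_xN}=0$, which by \eqref{Ann_S} is exactly $L_x\cap p^*\Ann{T_xN}=0$. As $\mathcal{F}_{\mathfrak{p}}$ is an isomorphism it preserves vanishing; writing the resulting covariant spinor of $\mathfrak{B}j(L_x)$ as $e^{-\pi}\wedge\mathfrak{X}$ with $\mathfrak{X}$ a generator of $\det R_x$, $R_x:=\{X\in T_xN\,|\,(X,0)\in\mathfrak{B}j(L_x)\}$, the push-forward $dq_x$ is nonzero iff $\mathfrak{X}$ survives, i.e. iff $R_x\cap\ker dq_x=0$ where $\ker dq_x=\{u_M(x)\,|\,u\in\frakg\}$. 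A short argument then matches the two surviving conditions with $L_x\cap K_x=0$: any element of $L_x\cap K_x$ is $\nabla v+p^*w$ with $v\in\{u_M(x)\}$ and $w\in\Ann{T_xN}$, so $v\in R_x\cap\ker dq_x=0$ and then $p^*w\in L_x\cap p^*\Ann{T_xN}=0$; the converse inclusion is immediate.

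For the final assertion that a nonzero $\op{\theta}_x$ generates $\op{U}_{\nabla_{red}}(L_{red}|_{q(x)})$, I would argue functorially. Equation \eqref{star_int} gives $\mathcal{F}_{\mathfrak{p}}\circ\Pi_{\nabla}=\op{\Pi_{\nabla}}\circ\mathcal{F}_{\mathfrak{p}}$, so $\mathcal{F}_{\mathfrak{p}}(dj_x^*\varphi_x)$ is a covariant pure spinor of $\mathfrak{B}j(L_x)$. Then, using the push-forward identities $dq_x(X\wedge\mathfrak{Y})=dq_x(X)\wedge dq_x(\mathfrak{Y})$ and $dq_x(i_{dq_x^*\eta}\mathfrak{Y})=i_\eta(dq_x\mathfrak{Y})$, one checks that $dq_x$ intertwines $\op{\Pi_{\nabla}}$ on $N$ with $\op{\Pi_{\nabla_{red}}}$ on $M_{red}$; hence $\op{\theta}_x$ is annihilated by the push-forward Lagrangian $\{(dq_xX,\eta)\,|\,(X,dq_x^*\eta)\in\mathfrak{B}j(L_x)\}$, which by Proposition \ref{L_char} is precisely $\Phi_{\nabla_{red}}(L_{red})|_{q(x)}$. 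A nonzero $\op{\theta}_x$ therefore spans $\op{U}_{\nabla_{red}}(L_{red}|_{q(x)})$.

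The main obstacle is the middle bookkeeping: verifying that the push-forward of the covariant pure spinor of $\mathfrak{B}j(L_x)$ really is the covariant pure spinor of $L_{red}$ and not of some other Lagrangian, which forces me to combine the contraction/push-forward identities with the characterization of $L_{red}$ in Proposition \ref{L_char}. By contrast, once $K_x=\{\nabla u_M(x)\}\oplus p^*\Ann{T_xN}$ is in hand, the transversality equivalence is careful but routine linear algebra.
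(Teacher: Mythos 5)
Your proposal is correct and follows essentially the same route as the paper's proof: pull back by $dj_x^*$, convert to a covariant spinor $e^{-\pi}\wedge\mathfrak{X}$ via $\mathcal{F}_{\mathfrak{p}}$ using \eqref{star_int}, reduce the vanishing of the push-forward to $\mathfrak{B}j(L_x)\cap\ker(dq_x)\neq 0$, identify this with $L_x\cap K_x\neq 0$ via Remark \ref{K_nabla}, and prove the generation statement by combining Proposition \ref{L_char} with the push-forward/contraction identities, exactly as in the text. The only cosmetic difference is that you re-derive the nonvanishing criterion for $dj_x^*\varphi_x$ from Proposition \ref{ps_description} and \eqref{Ann_S}, where the paper simply invokes Lemma \ref{bcg_lemma} (cited from \cite{a_b_m}).
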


We need a lemma first. Recall the definition \eqref{res_dirac} of $\mathfrak{B}j(L_x) \subset \T_x N$.

\begin{lem}\label{bcg_lemma}
For $\varphi_x \in U_{\nabla}(L_x)$, one has
$$
dj_x^* \varphi_x \neq 0  \Leftrightarrow L_x \cap p^*(\Ann{T_xN}) =0.
$$
In this case, $e^B \wedge dj_x^* \varphi_x$ is a generator of the pure spinor line $U(\mathfrak{B}j(L_x))$.
\end{lem}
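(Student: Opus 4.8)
The plan is to treat the two assertions separately, relying on the explicit form of a pure spinor from Proposition \ref{ps_description} and on a compatibility between $dj_x^*$ and the Clifford action.

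For the equivalence, I would first write $\varphi_x = e^{-\omega}\wedge \Omega$ as in Proposition \ref{ps_description}, with $0 \neq \Omega \in \det(\Ann{\cals_x})$ and $\cals_x = p(L_x)$. Since $dj_x^*$ is an algebra homomorphism of exterior algebras (Notation paragraph), one has $dj_x^* \varphi_x = e^{-dj_x^*\omega}\wedge dj_x^*\Omega$, and because $e^{-dj_x^*\omega}$ is a unit in $\Ext{T_x^*N}$ this vanishes if and only if $dj_x^*\Omega = 0$. Writing $\Omega$ as a top exterior power of a basis of $\Ann{\cals_x}\subset T_x^*M$, the form $dj_x^*\Omega$ is nonzero precisely when $dj_x^*$ restricts to an injection on $\Ann{\cals_x}$, i.e. when $\Ann{\cals_x}\cap \ker(dj_x^*) = 0$. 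As $\ker(dj_x^*) = \Ann{T_xN}$ and, by \eqref{Ann_S}, $\Ann{\cals_x}$ is identified through $p^*$ with $L_x\cap p^*(T_x^*M)$, this intersection corresponds exactly to $L_x\cap p^*(\Ann{T_xN})$. Hence $dj_x^*\varphi_x \neq 0 \Leftrightarrow L_x\cap p^*(\Ann{T_xN}) = 0$.

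For the generator statement, the key step is an intertwining of Clifford actions. For $X\in T_xN$ and $\beta\in T_x^*M$ with $\nabla X + p^*\beta \in L_x$, one computes $p(\nabla X + p^*\beta) = X$ and $s_{\nabla}(\nabla X + p^*\beta) = \beta$, so $\Pi_{\nabla}(\nabla X + p^*\beta)$ acts as $\alpha \mapsto i_X\alpha + \beta\wedge\alpha$. Using naturality of the interior product ($dj_x^*(i_X\alpha) = i_X\, dj_x^*\alpha$ for $X\in T_xN$) together with $dj_x^*$ being an algebra homomorphism, I would verify the operator identity $dj_x^*\circ \Pi_{\nabla}(\nabla X + p^*\beta) = \Pi(X + dj_x^*\beta)\circ dj_x^*$, where $\Pi$ is the Clifford action on $\Ext{T_x^*N}$ for the canonical splitting of $\T N$. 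Applying this to $\varphi_x\in U_{\nabla}(L_x)$ and using the definition \eqref{res_dirac} of $\mathfrak{B}j(L_x)$ gives $\Pi(X + dj_x^*\beta)(dj_x^*\varphi_x) = 0$ for every $X + dj_x^*\beta \in \mathfrak{B}j(L_x)$, that is, $\mathfrak{B}j(L_x)\subseteq \N(dj_x^*\varphi_x)$.

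To conclude, I would invoke that $\mathfrak{B}j(L_x)$ is a Lagrangian subspace of $\T_xN$ (the backward image of the Lagrangian $\Phi_{\nabla}(L_x)$; see \cite{bursztyn_radko, courant}) and that $\N(dj_x^*\varphi_x)$ is isotropic by the Clifford relations \eqref{ccr}. When $dj_x^*\varphi_x\neq 0$, the containment of the maximal isotropic $\mathfrak{B}j(L_x)$ in the isotropic $\N(dj_x^*\varphi_x)$ forces equality, so $dj_x^*\varphi_x$ is a pure spinor with null space $\mathfrak{B}j(L_x)$ and hence generates the one-dimensional line $U(\mathfrak{B}j(L_x))$. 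The main subtlety to watch is the bookkeeping in the first step --- matching $\ker(dj_x^*)$ on $\Ann{\cals_x}$ with the stated intersection condition through the identification \eqref{Ann_S} --- whereas the Clifford intertwining and the maximal-isotropic argument are routine.
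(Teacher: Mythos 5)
Your proof is correct, but note that the paper itself contains no internal argument for this lemma: it simply defers to Proposition 1.5 of \cite{a_b_m}, so what you have written is a self-contained replacement rather than a variant of an in-text proof. Your two steps are exactly the linear-algebra route one would expect (and essentially what the cited reference does). In step (a), writing a generator as $\varphi_x = e^{-\omega}\wedge\Omega$ via Proposition \ref{ps_description}, using that wedging by $e^{-dj_x^*\omega}$ is invertible, and matching $\ker(dj_x^*)=\Ann{T_xN}$ against \eqref{Ann_S} through the injectivity of $p^*$ (exactness of $E$) correctly yields the equivalence. In step (b), the intertwining $dj_x^*\circ \Pi_{\nabla}(\nabla X + p^*\beta) = \Pi(X+dj_x^*\beta)\circ dj_x^*$ gives $\mathfrak{B}j(L_x)\subseteq \N(dj_x^*\varphi_x)$, which by definition \eqref{U_nab} already means $dj_x^*\varphi_x \in U(\mathfrak{B}j(L_x))$; non-vanishing plus one-dimensionality of pure spinor lines then gives the generator statement, and your maximal-isotropic argument via \eqref{ccr} additionally identifies $\N(dj_x^*\varphi_x)$ with $\mathfrak{B}j(L_x)$, confirming purity. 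Two points you rely on silently but should state explicitly: first, $\varphi_x$ must be a nonzero element of $U_{\nabla}(L_x)$ (otherwise the stated equivalence fails trivially --- this is implicit in the lemma and in your appeal to Proposition \ref{ps_description}); second, that $\mathfrak{B}j(L_x)$ is Lagrangian in $\T_x N$, i.e.\ that the linear backward image of a Lagrangian subspace is Lagrangian, is a standard fact from \cite{bursztyn_radko, courant} but deserves at least the one-line dimension count, since it is exactly what makes the maximality argument close. What your approach buys is independence from the external reference and a proof entirely in the notation of the paper; what the citation buys the author is brevity.
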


We refer to Proposition 1.5 in \cite{a_b_m} for a proof.

\begin{proof}[Proof of Theorem \ref{main_linear}]
Let us first suppose that $dj_x^* \varphi_x \neq 0$. By \eqref{star_int} and Lemma \eqref{bcg_lemma}, $\mathcal{F}_{\mathfrak{p}}(e^B \wedge dj_x^* \varphi_x)$ is a covariant pure spinor for $\mathfrak{B}j(L_x)$. So, $\mathcal{F}_{\mathfrak{p}}(e^B \wedge dj_x^* \varphi_x) = e^{-\pi} \wedge \mathfrak{X}$, where $\mathfrak{X} \in \det(\mathfrak{B}j(L_x)\cap T_xN)$ and $\pi \in \wedge^2 T_xM$.
Hence, 
$$
\op{\varpi}_x=dq_x \circ \mathcal{F}_{\mathfrak{p}}( e^B \wedge dj^*_x \varphi_x) = 0 \Leftrightarrow dq_x(\mathfrak{X}) = 0 \Leftrightarrow \mathfrak{B}j(L_x) \cap \ker(dq_x)\neq 0.
$$
By the definition of $\mathfrak{B}j(L_x)$ \eqref{res_dirac}, one has that $u_N(x) \in \mathfrak{B}j(L_x) \cap \ker(dq_x)$, for $u \in \frakg$, if and only if there exists $\beta \in \Ann{T_xN}$ such that $\nabla u_N(x) +p^* \beta \in L_x$ and $dj_x^*\beta = i_{u_N(x)} B$. By construction, $i_{u_N(x)}B = dj_x^*\xi_{\nabla}(u)$, which implies that $\beta - \xi_{\nabla}(u)|_x \in \Ann{T_xN}$. Therefore,
$$
\nabla u_N(x) + p^* \beta = \chi_{\nabla}(u)|_x + p^*(\beta - \xi_{\nabla}(u)|_x) \in L_x \cap K_x.
$$
This proves \eqref{main_transv}. Now, let us show that the assumption that $dj_x^* \varphi_x \neq 0$ can be dropped. If $\op{\varpi_x} \neq 0$, then $dj_x^* \varphi_x \neq 0$ and the argument above implies that $L_x \cap K_x =0$. On the other hand, if $L_x \cap K_x=0$, then, in particular, $L_x \cap p^* \Ann{T_xN} = 0$ which implies, by Lemma \ref{bcg_lemma}, that $dj_x^*\varphi_x \neq 0$. Again, the argument above proves that $\op{\varpi_x} \neq 0$. This concludes the proof of \eqref{main_transv}. 

To prove that $0\neq \varpi_x$ generates $U_{\nabla_{red}}(L_{red})$, let $(Y,\eta) \in \T_{q(x)} M_{red}$ such that $e:=\nabla_{red} Y + p_{red}^* \eta \in L_{red}$. By Proposition \ref{L_char},  there exists $X\in T_xN$ such that $Y=dq_x(X)$ and $(X, dq^*_x \eta) \in \mathfrak{B}j(L_x)$. Therefore,
$$
\begin{array}{rl}
\op{\Pi_{\nabla_{red}}}(e) \op{\varpi_x} = & Y \wedge \op{\varpi_x} + i_{\eta} \op{\varpi_x} \\
        = & dq_x(X \wedge \mathcal{F}_{\mathfrak{p}} (e^B \wedge dj^*_x \varphi_x) + i_{dq_x^* \eta}\, \mathcal{F}_{\mathfrak{p}} (e^{B}\wedge dj^*_x\varphi_x)) \\
= & 0,
\end{array}
$$
as $\mathcal{F}_{\mathfrak{p}} (dj_x^* \varphi_x)$ is a covariant pure spinor for $\mathfrak{B}j(L_x)$. This proves that $\op{\varpi}_x$ belongs to $\op{U}_{\nabla_{red}}(L_{red}|_x)$ as we wanted.
\end{proof}

In order to obtain a contravariant pure spinor corresponding to $L_{red}$, we have to choose, besides $\mathfrak{p} \in \det{T_x N}$, an element $\nu \in \det(T_{q(x)}^*M_{red})$.
In this case,
$$
\varpi_x=\mathcal{F}_{\nu}(\op{\varpi_x}) = \mathcal{F}_{\nu} \circ dq_x \circ \mathcal{F}_{\mathfrak{p}} (e^B \wedge dj^*_x \varphi_x) \in \Ext{T_{q(x)}M_{red}}
$$
is a contravariant pure spinor for $L_{red}|_{q(x)}$, where $\mathcal{F}_{\nu}: \Ext{T_{q(x)}M_{red}} \Arrow \Ext{T_{q(x)}^*M_{red}}$ is the Fourier-type map \eqref{star_map}. Consider
$$
\begin{array}{rccl}
    & \det(T_xN) \otimes \det(T_{q(x)}^*M_{red}) &   \lrrow & \End{\Ext{T_x^*N}, \,\Ext{T_{q(x)}^*M_{red}}}\\
    & (\mathfrak{p}, \nu) & \longmapsto & \mathcal{F}_{\nu} \circ dq_x \circ \mathcal{F}_{\mathfrak{p}}.
\end{array}
$$
Under the isomorphism between $\det(T_xN) \otimes \det(T_{q(x)}^*M_{red})$ and $\det(\ker{dq_x})$ given by
$$
\mathfrak{p}\otimes \nu \mapsto \delta_x: = (-1)^{n(n-r)}\mathcal{F}_{\mathfrak{p}} (dq_x^* \nu),
$$
where $n= \dim(N)$ and $r= \dim(G)$,  $\mathcal{F}_{\nu} \circ dq_x \circ \mathcal{F}_{\mathfrak{p}}$ corresponds to a map $C_{\delta_x}$ given only in terms of $\delta_x$. To see the expression of $C_{\delta_x}$, fix a basis $\{\xi^1, \dots, \xi^n\}$ of $T_x^*N$ such that $\{\xi^1, \dots, \xi^{n-r}\}$ generates $\Ann{\ker{dq_x}}$. Any element of $ \Ext{T_x^*N}$ is a sum of forms of the type
$$
dq_x^* \alpha \wedge \xi^I, \,\, I=\{i_1 < \cdots < i_k\} \subset \{n-r+1, \cdots, n\}, \, \alpha \in T_{q(x)}^*M_{red}.
$$
The map $\mathcal{F}_{\nu} \circ dq_x \circ \mathcal{F}_{\mathfrak{p}}$ is equal to
\begin{equation}\label{first_step_push}
\begin{array}{rccl}
C_{\delta_x}: &  \Ext{T_x^*N} & \lrrow & \Ext{T_{q(x)}M_{red}} \vspace{2pt}\\
              & dq_x^* \alpha \wedge \xi^I &  \longmapsto & 
\left\{
\begin{array}{ll}
0, & \text{ if } I \neq \{n-r+1, \cdots, n\}\\
(i_{\xi^I} \delta_x) \, \alpha, & \text{ if } I = \{n-r+1,\cdots,n\}.
\end{array}
\right. \\
\end{array}
\end{equation}

For future reference, we state the contravariant version of Theorem \ref{main_linear}.

\begin{thm}\label{main_linear_contra}
For  $\varphi_x \in U_{\nabla}(L_x)$, 
$$
\varpi_x = C_{\delta_x} (e^B \wedge dj_x^*\varphi_x) \neq 0 \Leftrightarrow L_x\cap K_x = 0,
$$
where $C_{\delta_x}: \Ext{T_x^*N} \Arrow \Ext{T^*_{q(x)}M_{red}}$ is the map \eqref{first_step_push} corresponding to $\delta_x \in \det(\ker(dq_x))$. In this case, $\varpi_x$ is a generator of the contravariant pure spinor line $U_{\nabla_{red}}(L_{red}|_{q(x)})$.
\end{thm}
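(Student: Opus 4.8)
The plan is to deduce this contravariant statement directly from its covariant counterpart, Theorem \ref{main_linear}, by transporting the entire conclusion through the Fourier-type isomorphism $\mathcal{F}_{\nu}$. The key observation is that, by the construction of $C_{\delta_x}$ carried out just before the statement, the composite $\mathcal{F}_{\nu}\circ dq_x \circ \mathcal{F}_{\mathfrak{p}}$ coincides with $C_{\delta_x}$ under the identification $\mathfrak{p}\otimes\nu \mapsto \delta_x$. Consequently
$$
\theta_x = C_{\delta_x}(dj_x^*\varphi_x) = \mathcal{F}_{\nu}\bigl(dq_x(\mathcal{F}_{\mathfrak{p}}(dj_x^*\varphi_x))\bigr) = \mathcal{F}_{\nu}(\op{\theta}_x),
$$
so that $\theta_x$ is nothing but the image under $\mathcal{F}_{\nu}$ of the covariant spinor $\op{\theta}_x$ produced by Theorem \ref{main_linear}.

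With this identification in hand, both assertions become formal. For the non-vanishing equivalence I would simply note that $\mathcal{F}_{\nu}$ is a linear isomorphism from $\Ext{T_{q(x)}M_{red}}$ onto $\Ext{T_{q(x)}^*M_{red}}$, whence $\theta_x \neq 0$ if and only if $\op{\theta}_x \neq 0$; Theorem \ref{main_linear} already equates the latter with $L_x\cap K_x = 0$. For the generation claim I would invoke the intertwining relation \eqref{star_int}, applied fibrewise at $q(x)$ to the reduced splitting, in the form $\mathcal{F}_{\nu}\circ \op{\Pi_{\nabla_{red}}} = \Pi_{\nabla_{red}}\circ \mathcal{F}_{\nu}$. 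When $L_x\cap K_x=0$, Theorem \ref{main_linear} gives $\op{\Pi_{\nabla_{red}}}(e)\op{\theta}_x = 0$ for every $e\in L_{red}|_{q(x)}$; applying $\mathcal{F}_{\nu}$ and the intertwining relation yields $\Pi_{\nabla_{red}}(e)\theta_x = 0$ for all such $e$, so $\theta_x$ lies in the pure spinor line $U_{\nabla_{red}}(L_{red}|_{q(x)})$ and, being nonzero, generates it.

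The only substantive ingredient --- and the step I expect to be the main obstacle --- is the linear-algebra identity \eqref{first_step_push} that underlies the first display: that $\mathcal{F}_{\nu}\circ dq_x \circ \mathcal{F}_{\mathfrak{p}}$ depends on $\mathfrak{p}$ and $\nu$ only through $\delta_x = (-1)^{n(n-r)}\mathcal{F}_{\mathfrak{p}}(dq_x^*\nu)\in \det(\ker(dq_x))$ and has the explicit form recorded there. Verifying this requires a careful computation in the adapted basis $\{\xi^1,\dots,\xi^n\}$, tracking the contractions against $\mathfrak{p}$, the pushforward $dq_x$, and the contractions against $\nu$; it is precisely here that the sign $(-1)^{n(n-r)}$ and the vanishing on all multi-indices $I\neq\{n-r+1,\dots,n\}$ emerge. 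The point to stress is that this identity is purely about the quotient map $dq_x$ and is independent of $L$, so once it is established Theorem \ref{main_linear_contra} reduces entirely to Theorem \ref{main_linear}.
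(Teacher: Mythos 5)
Your proposal is correct and follows exactly the route the paper takes: the paper states Theorem \ref{main_linear_contra} without separate proof, precisely because it is obtained from Theorem \ref{main_linear} via the identification $\theta_x = \mathcal{F}_{\nu}(\op{\theta}_x) = \mathcal{F}_{\nu}\circ dq_x\circ\mathcal{F}_{\mathfrak{p}}(dj_x^*\varphi_x)$ and the intertwining relation \eqref{star_int}, with the composite rewritten as $C_{\delta_x}$ through the isomorphism $\mathfrak{p}\otimes\nu\mapsto\delta_x$. The linear-algebra identity behind \eqref{first_step_push}, which you flag as the substantive step, is likewise asserted by the paper as part of the construction rather than proved in detail, so your treatment matches the paper's in both structure and level of rigor.
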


\subsubsection{Push-forward.}

As $G$ acts freely on $N$, the quotient map $q: N \Arrow M_{red}$ is a $G$-principal bundle. By assumption, $G$ is a compact, connected Lie group and, therefore, we can consider the push-forward map
$$
q_*: \Omega(N) \lrrow \Omega(M_{red}).
$$
We recall its definition. Let $\mathcal{U} \subset M_{red}$ be an open set such that $N|_{q^{-1}(\mathcal{U})}$ is trivial and let $\pr_G: N|_{q^{-1}(\mathcal{U})} \Arrow G$ be the projection on the fiber. Locally, any differential form on $N$ is a sum of two types of differential forms:
$$
f \, q^* \beta \wedge \pr_G^*\nu, \text{ where } \beta \in \Omega(M_{red})
\text{ and } \left\{
\begin{array}{ll}
\nu\in \Omega^r(G), & \text{ type (I)};\\
\nu\in \Omega^k(G), \,k< r, & \text{ type (II)}
\end{array}\right.
$$
with $f \in C^{\infty}(q^{-1}(\mathcal{U}))$ and $r= \dim(G)$. The \textit{push-forward}  $q_*: \Omega(N) \Arrow \Omega(M_{red})$ is locally defined by
\begin{equation}\label{push_def}
\alpha=f \, q^* \beta \wedge \pr_G^*\nu \longmapsto 
\left\{
\begin{array}{ll}
\left(\int_G f(\cdot, g) \, \nu\right) \, \beta,  & \text{ if } \alpha \text{ is type (I)};\vspace{3pt}\\
0,  &  \text{ if } \alpha \text{ is type (II).}
\end{array}\right. 
\end{equation}

Alternatively, one can see the push-forward map as a composition
\begin{equation}\label{push_comp}
\Omega(N) \stackrel{C_{\delta}} \lrrow \Gamma(q^* \Ext{T^*M_{red}}) \stackrel{I_{\delta}} \lrrow \Omega(M_{red}).
\end{equation}
The first map is induced by the bundle map $C_{\delta}: \Ext{T^*N} \Arrow q^*\Ext{T^*M_{red}}$ defined pointwise by \eqref{first_step_push}, where $\delta \in \Gamma(\det(\ker{dq}))$ is the image of some fixed element $\delta_{\frakg} \in \wedge^{r} \frakg$ under the natural extension 
\begin{equation}\label{inf_extension}
\upsilon: \wedge^r \frakg \lrrow \Gamma(\wedge^r TN)
\end{equation}
of the infinitesimal action $\upsilon: \frakg \Arrow \Gamma(TN)$. The second map is defined locally as
$$
f\, \alpha \mapsto \left( \int_G f(\cdot, g) \,\nu^L \right)  \alpha, \,\,\, \alpha \in \Gamma(\Ext{T^*M_{red}}|_{\mathcal{U}}), \,\,\, f \in C^{\infty}(q^{-1}(\mathcal{U})),
$$
where $\nu^L \in \det(T^*G)$ is the left-invariant volume form such that 
$
i_{\delta_{\frakg}} \nu^L (e) = 1.
$

\subsection{Main Theorem}
\subsubsection{Transversal case}

Recall the reduction setting fixed in the beginning of \S 3 and consider an invariant splitting $\nabla: TM \Arrow E$. Define a $G$-action on $\Ext{T^*M}$ by composing $\A: G \Arrow \Aut(E)$ with the ($\nabla$-dependent) action $\Sigma: \Aut(E) \Arrow \End{\Ext{T^*M}}$ defined by \eqref{action_def}. As $\nabla$ is invariant, this $G$-action on $\Ext{T^*M}$ is just the pull-back by $\varphi^*_{g^{-1}}, \, g\in G$. The fact that $L$ is invariant implies that $U_{\nabla}(L)$ is $G$-invariant (see Remark \ref{ps_invariance}).

\begin{thm}\label{main1}
Let $\theta \in \Omega^1(N, \frakg)$ be a connection and consider $B \in \Omega^2(N)$ given by \eqref{B_def} for $\xi=\xi_{\nabla}$.  Let $\varphi$ be a nowhere-zero invariant section of $U_{\nabla}(L)|_N$ over an invariant open set $\mathcal{V} \subset N$. One has that 
\begin{equation}\label{main_spin}
\varphi_{red}=q_*(e^B \wedge j^*\varphi)
\end{equation}
is a section of $U_{\nabla_{red}}(L_{red})$ over $\mathcal{V}/G$. For $x \in \mathcal{V}$,
$
\varphi_{red}|_{q(x)} = 0 \Leftrightarrow L_x \cap K_x \neq 0.
$
In particular, if $L|_N \cap K = 0$, then $\varphi_{red}$ is nowhere-zero. 
\end{thm}

\begin{proof}
First note that, by \eqref{B_spin}, $e^B \wedge j^*\varphi$ is an invariant nowhere-zero section of $\mathfrak{B}j(L)$ over $\mathcal{V}$. Now, choose $\delta_{\frakg} \in \wedge^r\frakg$ and consider its image $\delta \in \Gamma(\det(\ker(dq)))$ under the extension \eqref{inf_extension} of the infinitesimal action. Let $C_{\delta}: \Ext{T^*N} \Arrow q^*\Ext{T^*M_{red}}$ be the associated map given pointwise by \eqref{first_step_push} and consider the section $\theta$ of the pull-back bundle $q^*\Ext{T^*M_{red}}$ over $\mathcal{V}$ defined by
$$
\varpi_x = C_{\delta}\circ dj^*_x( e^B \wedge \varphi_x) \in \Ext{T_{q(x)}M_{red}}, \, x \in \mathcal{V}.
$$
By Theorem \ref{main_linear_contra}, \,$\varpi_x \in U_{\nabla_{red}}(L_{red}|_{q(x)})$ and $\varpi_x = 0 \Leftrightarrow L_x\cap K_x \neq 0$. The result will follow if we prove that $\varphi_{red}|_{q(x)}= \lambda \, \varpi_x$, for some $\lambda \in \R\backslash\{0\}$. For this, note that if  $\varpi_y = \varpi_x$, for $y$ and $x$ in the same $G$-orbit, then, by \eqref{push_comp},
$$
\varphi_{red}|_{q(x)} = q_*( e^B \wedge j^* \varphi)|_{q(x)} = I_{\delta}(\varpi)|_{q(x)}=  \left(\int_G \nu^L \right) \varpi_x,
$$
where $\nu^L$ is the left-invariant volume form on $G$ such that $i_{\delta_{\frakg}} \nu^L(e) =1$.

To prove that $x\mapsto \varpi_x$ is constant on $G$-orbits, let us first relate $\delta_y$ and $\delta_x$, for $y=\psi_g(x)$, $g\in G$. By the well-known formula,
$$
u_M(y) = d\psi_g  (Ad_{g^{-1}}(u))_M(x),
$$
it follows that $\delta_y =  d\psi_g \,(\overline{\delta}_x)$, where $\overline{\delta} \in \Gamma(\ker(dq))$ is the image of 
$$
\overline{\delta}_{\frakg}:=Ad_{g^{-1}}(\delta_{\frakg}) = \det(Ad_{g^{-1}}) \delta_{\frakg} \in \wedge^r \frakg
$$
under the natural extension \eqref{inf_extension} of the infinitesimal action. As $G$ is compact and connected, $\det(Ad_{g^{-1}}) = 1$. Therefore, 
\begin{equation}\label{delta_eq}
\delta_y =  d\psi_g \,(\delta_x).
\end{equation}
The result now follows directly from the definition \eqref{first_step_push} of $C_{\delta_{\cdot}}$ and from the relation $\varphi_y = d\psi_{g^{-1}}^* \varphi_x$.

\end{proof}

\begin{rem}\em
If $\theta_1, \theta_2 \in \Omega^1(N, \frakg)$ are two connections, then $B_1 - B_2$ is a basic 2-form. Let $\tilde{B} \in \Omega^2(M_{red})$ be such that $q^*\tilde{B}= B_1-B_2$. So,
$$
\varphi_1 = q_*\circ (e^B_1 \wedge j^*\varphi) = q_*(e^{(B_1-B_2)} \wedge e^{B_2}\wedge  j^* \varphi) = e^{\tilde{B}} \wedge \varphi_2.
$$
\end{rem}

\subsubsection{Non-transversal case.}

To have a completely general description of the pure spinor line bundle of $L_{red}$, we have to tackle the case where $L|_N \cap K$ has non-zero constant rank. We cannot apply Theorem \ref{main1} directly in this case, as the reduced pure spinor will be identically zero. We give a simple example illustrating this.

\begin{exam}\label{dist}{\em
Let $\mathcal{F}$ be a foliation on $M$ and consider a submanifold $N \subset M$. By considering a trivial Lie group $G=\{e\}$ acting  on $M$ and the Dirac structure $L_{T\mathcal{F}}$ (see Example \ref{distributions}), these data fit into our reduction setting. The isotropic subbundle $K \subset \T M|_N$ \eqref{isotropic} is just $\Ann{TN}$ and $L_{T\mathcal{F}}\cap K$ has non-zero constant rank if and only if the foliation $\mathcal{F}$ intersects cleanly, \textit{but not transversally}, with $N$. For any $\varphi \in U(L_{T\mathcal{F}}) = \det(\Ann{T\mathcal{F}})$, formula \eqref{main_spin} gives
$$
\varphi_{red}=j^*\varphi
$$
which is identically zero as $\Ann{T\mathcal{F}} \cap \Ann{TN} \neq 0$. Observe that, in any case, the reduced Dirac structure is well-defined: $(L_{T\mathcal{F}})_{red} = L_{T\mathcal{F}\cap TN}$.
}
\end{exam}

Our approach to circumvent this problem of non-transversality is to pertub our original Dirac structure to obtain the transversality condition $L|_N \cap K =0$ in such a way that the reduced Dirac structure remains the same. 

\begin{dfn}{\em
Suppose $L|_N \cap K$ has constant rank. A \textit{pertubation input} for $(L, K)$ is an invariant isotropic subbundle $D \subset E|_N$ for which 
\begin{equation}\label{pert_eq}
(L|_N\cap K)\p \oplus D = E|_N.
\end{equation}
}
\end{dfn}

Note that if $(L,K)$ satisfies the transversality condition $L|_N \cap K = 0$, the only possible pertubation input is the zero subbundle. Let us prove that pertubation inputs always exist.

\begin{lem}
Assume $L|_N \cap K$ has constant rank and let $F \subset E|_N$ be any invariant complement to $(L|_N \cap K)^{\perp}$. One has that 
\begin{equation}\label{D_eq}
D = \{e- \frac{1}{2} Ae \,\, | \,\, e \in F\}
\end{equation}
is a pertubation input, where $A: F \Arrow L|_N \cap K$ is defined by the composition 
$$
F \hspace{3pt} \vphantom{}^{\underrightarrow{e \mapsto \langle e, \cdot \rangle|_F}} \hspace{3pt}  F^*  \hspace{3pt} \vphantom{}^{\underrightarrow{\langle \cdot, \cdot \rangle^{\sharp}\vphantom{}^{-1}}} \hspace{3pt} L|_N \cap K.
$$
\end{lem}

\begin{proof}
Let us first prove that $A$ is well defined. For simplicity, call $K_L = L|_N \cap K$. By assumption, it is an invariant isotropic subbundle of $E|_N$. As $G$ is compact and preserves $\langle \cdot, \cdot \rangle$, there always exists an invariant subbundle $F \subset E|_N$ such that $K_L\p \oplus F = E|_N$. As
$$
0=(K_L\p\oplus F)\p = K_L\cap F\p,
$$
the bundle map $\langle \cdot, \cdot \rangle^{\sharp}: K_L \Arrow F^*$ given by $\langle \cdot, \cdot \rangle^{\sharp}(k) = \langle k, \cdot \rangle|_{F}$ is an isomorphism, so  $A$ is well-defined. It clearly satisfies $\langle Ae,k \rangle =\langle e,k \rangle$, for $(e,k) \in F \times_N K_L$. Hence, for $(e_1, e_2) \in F\times_N F$, one has
$$
\langle e_1-\frac{1}{2} Ae_1, e_2 - \frac{1}{2} Ae_2 \rangle = \langle e_1,e_2\rangle -\frac{1}{2}(\langle e_1,Ae_2 \rangle + \langle Ae_1, e_2 \rangle ) = 0.
$$
This proves that $D$ is an isotropic complement to $K_L$. It remains to prove that $D$ is invariant. For $g \in G$, let $\mathcal{A}_g= (\Psi_g, \psi_g) \in \Aut(E)$ and $x \in N$. For $e_1 \in F_x, \, e_2\in F_{\psi_g(x)}$, it follows from the invariance of $F$ and $\langle \cdot, \cdot \rangle$ that
$$
\begin{array}{rl}
\langle A(\Psi_g(e_1)), e_2 \rangle = & \langle \Psi_g(e_1), e_2 \rangle\vspace{3pt} \\
   = & \langle e_1, \Psi_{g^{-1}}(e_2) \rangle \vspace{3pt}\\
   = & \langle A(e_1), \Psi_{g^{-1}}(e_2) \rangle\vspace{3pt}\\
   = & \langle \Psi_g(A(e_1)), e_2 \rangle.
\end{array}
$$
By the non-degeneracy of the form, it follows that $A\circ \Psi_g = \Psi_g \circ A$. This proves that $D$ is invariant as we wanted.
\end{proof}

As the name suggests, we shall use a pertubation input $D$ for $(L, K)$ to pertub $L|_N$ in order to obtain a Lagrangian subbundle $L_D \subset E|_N$ satisfying
\begin{itemize}
\item[(i)] $L_D$ is invariant and $L_D \cap K = 0$;
\item[(ii)] $L_D \cap K\p + K = L\cap K\p + K$;
\item[(iii)] the passage from $L$ to $L_D$ is computable in the pure spinor level.
\end{itemize}

The pertubation of $L|_N$ is defined by
\begin{equation}\label{pertubed_dirac}
L_D: = L|_N \cap D\p + D.
\end{equation}
Note that if $L|_N \cap K=0$, then $L_D = L|_N$.

\begin{prop}\label{pertubation}
Suppose $L|_N \cap K$ has constant rank and let $D \subset E|_N$ be a pertubation input for $(L,K)$. The subbundle $L_D \subset E|_N$ defined by \eqref{pertubed_dirac} is Lagrangian and satisfies conditions (i) and (ii) above. Moreover, let $\varphi$ be a nowhere-zero section of $U_{\nabla}(L|_N)$ over some open set $\mathcal{V} \subset N$, where $\nabla$ is any isotropic splitting for $E$. Suppose further that there exists a section $\mathfrak{d}$ of $\det(D)$ over $\mathcal{V}$. Then
\begin{equation}\label{phi_D}
\varphi_D=\Pi_{\nabla}(\mathfrak{d}) \varphi 
\end{equation}
is a nowhere-zero section of $U_{\nabla}(L_D)$ over $\mathcal{V}$.
\end{prop}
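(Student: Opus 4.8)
The plan is to separate the linear-algebraic content (that $L_D$ is Lagrangian and satisfies (i) and (ii)) from the spinorial content, and to build everything on a short dictionary of facts at a point $x\in N$. Writing $K_L := L|_N\cap K$, I would first note that since $L$ is Lagrangian, $K_L\subset L=L\p$ gives $K\subset K\p\subset K_L\p$, while dually $(L+K\p)\p = L\p\cap(K\p)\p = L\cap K = K_L$, so $K_L\p = L + K\p$. From the defining relation $K_L\p\oplus D = E|_N$ of a perturbation input I would then read off $D\cap L = 0$ and $D\cap K = 0$ (because $L,K\subset K_L\p$), the relation $K_L\p\cap D = 0$, and—taking orthogonals in $(K_L + D\p)\p = K_L\p\cap D = 0$—also $K_L\cap D\p = 0$. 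These combine into the decomposition $L = (L\cap D\p)\oplus K_L$, which is the technical backbone of all that follows (the intersection $(L\cap D\p)\cap K_L$ lies in $K_L\cap D\p=0$, and the dimensions match).

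Next I would check that $L_D = (L\cap D\p)+D$ is Lagrangian. Isotropy is the four-term expansion of $\langle a_1+d_1, a_2+d_2\rangle$ with $a_i\in L\cap D\p$ and $d_i\in D$: every term vanishes by isotropy of $L$, isotropy of $D$, and $a_i\in D\p$. For the dimension I would use $(L\cap D\p)\p = L+D$ together with $L\cap D = 0$ to get $\dim(L\cap D\p)= m-\dim D$ (with $m=\dim M$), so that $\dim L_D = (m-\dim D)+\dim D = m$. Invariance of $L_D$ is formal, as $L$, $D$ and $\langle\cdot,\cdot\rangle$ (hence $D\p$) are invariant. For $L_D\cap K = 0$ in (i): any $v=a+d\in K$ with $a\in L\cap D\p\subset K_L\p$ forces $d=v-a\in K_L\p$, so $d\in D\cap K_L\p = 0$, and then $v=a\in K_L\cap D\p = 0$.

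For (ii) I would intersect the decomposition $L=(L\cap D\p)\oplus K_L$ with $K\p$; since $K_L\subset K\subset K\p$, this yields $L\cap K\p = (L\cap D\p\cap K\p)\oplus K_L$, whence $L\K = (L\cap D\p\cap K\p)+K\subset L_D\cap K\p + K$. This gives the inclusion $L\K\subset L_D\K$ for free. Equality then follows from a uniform dimension count: for \emph{every} Lagrangian $P\subset E|_N$ one has $\dim(P\cap K\p + K)=m$, as one checks from $(P\cap K\p)\p = P+K$ and $P\cap K\p\cap K = P\cap K$. Applying this to $P=L$ and to $P=L_D$ shows both sides of (ii) have dimension $m$, so the inclusion is an equality.

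Finally, for the spinorial statement I would work over a local frame $d_1,\dots,d_k\in\Gamma(D)$ with $\mathfrak d = d_1\wedge\cdots\wedge d_k$; isotropy of $D$ identifies $\Pi_{\nabla}(\mathfrak d)$ with the Clifford product $\Pi_{\nabla}(d_1)\cdots\Pi_{\nabla}(d_k)$. The engine is the standard Clifford lemma: if $d$ is isotropic and $d\notin\mathcal{N}_{\nabla}(\psi)$ for a pure spinor $\psi$, then $\Pi_{\nabla}(d)\psi\neq 0$ and $\mathcal{N}_{\nabla}(\Pi_{\nabla}(d)\psi)=(\mathcal{N}_{\nabla}(\psi)\cap d\p)+\R d$, which I would prove directly from the relation $\Pi_{\nabla}(v)\Pi_{\nabla}(d)+\Pi_{\nabla}(d)\Pi_{\nabla}(v)=\langle v,d\rangle$. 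Setting $\psi_j=\Pi_{\nabla}(d_1)\cdots\Pi_{\nabla}(d_j)\varphi$ and $D_j=\mathrm{span}(d_1,\dots,d_j)$, I would show by induction that $\psi_j\neq 0$ and $\mathcal{N}_{\nabla}(\psi_j)=(L\cap D_j\p)+D_j$. I expect the inductive step to be the main obstacle, and it is exactly where $D\cap L=0$ enters: to apply the Clifford lemma one must verify $d_{j+1}\notin\mathcal{N}_{\nabla}(\psi_j)$, but a decomposition $d_{j+1}=a+w$ with $a\in L\cap D_j\p$ and $w\in D_j$ would give $a=d_{j+1}-w\in D\cap L=0$, forcing $d_{j+1}\in D_j$ against independence; a short computation, using $D_j\subset d_{j+1}\p$, then identifies $(\mathcal{N}_{\nabla}(\psi_j)\cap d_{j+1}\p)+\R d_{j+1}$ with $(L\cap D_{j+1}\p)+D_{j+1}$, closing the induction. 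At $j=k$ this produces $\varphi_D=\psi_k$ nowhere-zero with $\mathcal{N}_{\nabla}(\varphi_D)=(L\cap D\p)+D=L_D$, i.e. a nowhere-zero section of $U_{\nabla}(L_D)$; since the conclusion is frame-independent it holds throughout $\mathcal V$.
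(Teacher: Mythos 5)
Your proposal is correct and follows essentially the same route as the paper: the same key decomposition $L|_N = (L|_N\cap D^{\perp})\oplus(L|_N\cap K)$ drives the linear algebra for Lagrangianity, (i) and (ii), and the spinor claim is proved by induction on a basis of $D$, Clifford-multiplying one isotropic generator at a time and using that $\Pi_{\nabla}(d)\psi=0$ iff $d\in\mathcal{N}_{\nabla}(\psi)$. The differences are only cosmetic --- you prove the inclusion in (ii) in the opposite direction and close it with the same rank-$\dim(M)$ count, and you package the inductive step as a one-element lemma identifying $\mathcal{N}_{\nabla}(\Pi_{\nabla}(d)\psi)=(\mathcal{N}_{\nabla}(\psi)\cap d^{\perp})+\mathbb{R}d$ exactly, where the paper instead verifies annihilation of $L^j$ by expanding the full Clifford product.
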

 
\begin{rem}{\em
For any $x \in N$, the relations \eqref{ccr} and the fact that $D_x$ is isotropic imply that the subalgebra of $Cl(E_x)$ generated by $D_x$ is isomorphic to $\Ext{D_x}$. In this way, we can consider $\det(D_x) \subset Cl(E_x)$. 
}
\end{rem}

\begin{proof}
It is clear that $L_D$ is invariant as both $L|_N\cap D\p$ and $D$ are invariant subbundles of $E|_N$. The fact that $L_D$ is Lagrangian follows from
$$
L_D = [(L|_N + D)\cap D\p]\p.
$$
Indeed, one has
$$
L_D\p = (L|_N +D)\cap D\p = L|_{N} \cap D\p + D = L_D.
$$
To finish the proof of (i), note that $(L|_N\cap K)\p \cap D=0$ implies $(L|_N\cap K) \oplus D^{\perp}= E|_N$; this in turn implies that
\begin{equation}\label{L_decomposition}
L|_N=(L|_N\cap K) \oplus (L|_N\cap D^{\perp}).
\end{equation}
So,
$$
\begin{array}{rl}
E|_N=  (L|_N\cap K)\p + D =  K^{\perp} +  L|_N + D \hspace{-2pt} &  \hspace{-5pt}\stackrel{\eqref{L_decomposition}}= K\p +( L|_N\cap K+ L|_N\cap D^{\perp}) + D \\
  & \hspace{2pt}= \hspace{4pt}K\p + L|_N\cap K + L_D\\
  & \hspace{2pt}= \hspace{4pt} K\p + L_D.
\end{array}
$$
Hence,
$$
L_D\cap K = (L_D+ K\p)\p = E|_N\p = 0,
$$
by the non-degeneracy of $\langle \cdot, \cdot \rangle$. As for (ii), by (\ref{L_decomposition}),
$$
L|_N+D=L|_N \cap K + L|_N\cap D\p + D = L|_N\cap K + L_D,
$$
which implies that $L_D + K = L|_N + D + K$.
Thus
$$
L_D\cap K\p = (L_D+K)\p = (L|_N+D+K)\p \subset L|_N\cap K\p.
$$
So,
$$
L_D\cap K\p + K \subset L|_N\cap K\p + K,
$$
and, as both subspaces are Lagrangians, they have the same rank equal to $\dim(M)$. This proves (ii). It remains to prove that $\varphi_D$ is a nowhere-zero section of $U_{\nabla}(L_D)$. Let $x \in \mathcal{V}$ and $\{d_1, \dots, d_l\}$ be a basis of $D_x$ such that $\mathfrak{d}_x = d_l \wedge \cdots \wedge d_1$. For $j=1, \dots, l$, define
$$
D^j = \text{span}\{d_1, \dots, d_j\} ,\,\, \,\,L^j = L_x \cap D^j\vphantom{}\p + D^j
$$
and 
$$
\varphi^j = \Pi_{\nabla}(d_j \wedge \cdots \wedge d_1) \,\varphi_x.
$$
We shall proceed by induction. For $j=1$, 
$$
\varphi^1 = \Pi_{\nabla}(d_1) \varphi_x = 0 \Leftrightarrow d_1 \in L_x,
$$
As $L_x \cap D_x= 0$ and $d_1 \neq 0$, it follows that $\varphi^1\neq 0$. For any $e \in L_x \cap D^1\vphantom{}\p$, using \eqref{ccr}, one has
$$
\Pi_{\nabla}(e)\varphi^1 = \langle e, d_1 \rangle\, \varphi_x - \Pi_{\nabla}(d_1) \Pi_{\nabla}(e) \varphi_x = 0.
$$
Also,
$$
\Pi_{\nabla}(d_1) \varphi^1 = \Pi_{\nabla}(d_1^{\,2}) \varphi_x = 0,
$$
as $d_1^{\,2} = 2\langle d_1, d_1 \rangle =0$. This proves that $\varphi^1 \in U_{\nabla}(L^1)$ as we wanted. Assume now that $0 \neq \varphi^{j-1} \in U_{\nabla}(L^{j-1})$, for some $j \in \{2, \dots, l\}$. Then
$$
\begin{array}{rl}
\varphi^j = \Pi_{\nabla}(d_j) \varphi^{j-1} = 0 & \hspace{-8pt} \Leftrightarrow d_j \in L^{j-1} \\ 
& \hspace{-8pt} \Leftrightarrow \exists \, a_1, \dots, a_{j-1} \in \R  \text{ s.t. } d_j - \sum_{i=1}^{j-1} a_i d_i \in L_x \cap D^{j-1}\vphantom{}\p.
\end{array}
$$
As $L_x \cap D_x = 0$, it follows that if $\varphi^{j}= 0$, then $d_j = \sum_{i=1}^{j-1} a_i d_i$, which is absurd as $\{d_1, \dots, d_n\}$ is linearly independent. Hence, $\varphi^j \neq 0$. For $e \in L_x \cap D^j\vphantom{}\p$,  using \eqref{ccr}, one has
$$
\begin{array}{rl}
\Pi_{\nabla}(e) \varphi^j = & \sum_{i=1}^j (-1)^{j-i} \langle e, d_i \rangle \Pi_{\nabla}(d_j \cdots d_{i-1} d_{i+1} \cdots d_1) \varphi_x  \vspace{2pt}\\
& \hspace{-15pt}+ (-1)^{j} \Pi_{\nabla}(d_j \cdots d_1) \Pi_{\nabla}(e) \varphi_x\\
 = & 0.
\end{array}
$$
For any $i=1, \dots, j$, using \eqref{ccr} again and the fact that $D$ is isotropic,
$$
d_i d_j \cdots d_1 = (-1)^{j-i+1} d_j \cdots d_{i-1} d_i^{\,2} d_{i+1} \cdots d_1 = 0. 
$$
Hence, 
$$
\Pi_{\nabla}(d_i) \varphi^j = 0, \,\, \forall \, i \in \{1, \cdots, j\}.
$$
So, $\varphi^j \in U_{\nabla}(L^j)$ as we wanted to prove. The result follows from the observation that $\varphi^l = \varphi_D|_x$ and $L^l = L_D|_x$.
\end{proof}

\begin{rem}\label{inv_D}{\em
The map
$$
\begin{array}{cll}
U_{\nabla}(L|_N) \otimes \det(D)& \lrrow & U_{\nabla}(L_D)\\
 \varphi\otimes \mathfrak{d} & \longmapsto & \Pi_{\nabla}(\mathfrak{d}) \varphi
\end{array}
$$
is a $G$-equivariant isomorphim, where $G$ acts on $\det(D)$ via $Cl(\Psi_g)$, for $g\in G$ and it acts on pure spinor lines as explained in the beginning of \S 3.2.1. This follows from \eqref{action_eq}
}
\end{rem}

The next example shows how this pertubation procedure works in the case of Example \ref{dist}.
\begin{exam}\em
A possible pertubation input for $(L_{T\mathcal{F}},\, \Ann{TN})$ is any distribution $D \subset TM|_N$ such that $(T\mathcal{F}|_N+TN) \oplus D = TM|_N$. The pertubed bundle $L_D$ is 
$$
L_D = (T\mathcal{F}|_N + D) \oplus \Ann{T\mathcal{F}|_N + D}.
$$
The distribution $T\mathcal{F}|_N + D$ is now transversal to $TN$ and $(T\mathcal{F}|_N + D)\cap TN = T\mathcal{F}|_N \cap TN$.
The pertubed pure spinor is
$$
\varphi_D= i_{\mathfrak{d}} \varphi,
$$
where $\mathfrak{d} \in \det(D)$. Note that the contraction with $\mathfrak{d}$ kills all covector which lies in $\Ann{TN}$ and, hence, $j^*\varphi_D \neq 0$. 
\end{exam}

We are now able to state our main theorem in its most general form. As usual, fix an invariant splitting $\nabla$ for $E$ and a connection $\theta \in \Omega^1(N, \frakg)$.

\begin{thm}\label{main_geral}
Suppose $L|_N \cap K$ has constant rank and let $D$ be the pertubation input for $(L,K)$ given by \eqref{D_eq}. Let $\varphi$ be a nowhere-zero invariant section of $U_{\nabla}(L)|_N$ over an invariant open set $\mathcal{V} \subset N$ and suppose there exists an invariant section $\mathfrak{d}$ of $\det(D)$ over $\mathcal{V}$. One has that
\begin{equation}\label{reduced_spinor}
\varphi_{red}:=q_*(e^B \wedge  j^* \Pi_{\nabla}(\mathfrak{d})\varphi)
\end{equation}
is a nowhere-zero section of $U_{\nabla_{red}}(L_{red})$ over $\mathcal{V}/G$, where $B \in \Omega^2(N)$ is the 2-form given by \eqref{B_def} for $\xi = \xi_{\nabla}$.
\end{thm}

\begin{proof}
Define $\varphi_D:= \Pi_{\nabla}(\mathfrak{d})\varphi$. As $\varphi$ is a nowhere-zero section of $U_{\nabla}(L)$ over $\mathcal{V}$,  Proposition \ref{pertubation} together with Remark \ref{inv_D} says that $\varphi_D$ is a nowhere-zero invariant section of $U_{\nabla}(L_D)$ over $\mathcal{V}$, where $L_D= L|_N \cap D\p + D$. As $L_D \cap K = 0$ and $\nabla$ is $(\mathcal{A},\chi, N )$-admissible, Theorem \ref{main1} guarantees that
$$
q_*(e^B \wedge j^* \varphi_D)
$$
is a nowhere-zero section of $U_{\nabla_{red}}(L_{D, \, red})$, where 
$$
L_{D, \, red} = \left.\frac{L_D \cap K\p + K}{K} \right/G.
$$
Now, as $L_D \cap K\p + K = L|_N\cap K\p + K$, we have that $L_{D, \, red} = L_{red}$. The result now follows from (see \eqref{B_rep})
\end{proof}

\begin{rem}\em Under the assumptions of Theorem \ref{main_geral}, if $D' \subset E|_N$ is another pertubation input for $(L,K)$ and $\mathfrak{d}' \in \det(D')$ is an invariant section,  then
$$
q_*(e^B \wedge j^*\Pi_{\nabla}(\mathfrak{d}) \varphi) = q_*(e^B \wedge j^*\Pi_{\nabla}(\mathfrak{d}') \varphi),
$$
where $f\in C^{\infty}(M_{red})$ is such that $q^*f = \det(\pr_{D_2}|_{D_1})$ and $\pr_{D_2}: E|_N \Arrow D_2$ is the projection along $L|_N \cap K$.
\end{rem}

\section{Applications.}

\subsection{Generalized Calabi-Yau reduction.}

Let $E$ be a Courant algebroid over $M$ and consider its complexification $E_{\C} = E\otimes \C$. By extending $\C$-bilinearly both the metric and the Courant bracket, we can study the Lagrangian subbundles of $E_{\C}$ whose sections are closed under $\Cour{\cdot, \cdot}_{\C}$. We call such Lagrangian subspaces \textit{complex Dirac structures} on $M$.

\begin{dfn}[M. Gualtieri \cite{gualt_thesis}, N. Hitchin\cite{hitchin}]\label{gen_complex}\em
A \textit{generalized complex structure} on $M$ is a complex Dirac structure $L \subset E_{\C}$ such that
\begin{equation}\label{total_real}
L \cap \overline{L} = 0,
\end{equation}
where $\overline{L}$ is the conjugate subbundle. A general Lagrangian subbundle $L \subset E_{\C}$ such that \eqref{total_real} holds is called a \textit{generalized almost complex structure}.
\end{dfn}

A generalized almost complex structure $L \subset E_{\C}$ on $M$ can be equivalently described (see \cite{crainic, gualt_thesis}) by a bundle map $\J: E \lrrow E$ such that $\J^2=-Id$ and $\langle \J \cdot, \J \cdot \rangle = \langle \cdot, \cdot\rangle$. Under this description, $L$ is the $+i$-eigenbundle of $\J$,
\begin{equation}\label{L_complex}
L= \{ e - i \J e\,\, | \,\, e \in E\}.
\end{equation}
The integrability of $L$ is equivalent to
$$
\Cour{\J e_1, \J e_2} - \Cour{e_1, e_2} - \J(\Cour{\J e_1, e_2}  + \Cour{e_1, \J e_2}) = 0, \,\,\, \forall \, e_1, \, e_2 \in \Gamma(E).
$$

It is straightforward to extend the definition of pure spinor line bundle for the case of a generalized complex structure $L$ so as to have a complex line bundle $U_{\nabla}(L) \subset \Ext{T^*M}\otimes \C$ over $M$.

\begin{dfn}{\em
A \textit{generalized Calabi-Yau structure} on $M$ is an almost generalized complex structure $L \subset E_{\C}$ such that  $U_{\nabla}(L)$ has a global nowhere-zero section $\varphi$ such that
\begin{equation}\label{gcy_eq}
d_H\varphi=0,
\end{equation}
where $H \in \Omega^3(M)$ is the curvature of $\nabla$ and $d_H$ is the $H$-twisted differential.
}
\end{dfn}

\begin{rem}\em
Note that Equation \eqref{gcy_eq} implies that $L$ is integrable because of \eqref{spinor_integrability}.
\end{rem}

\begin{exam}[\cite{gualt_thesis, hitchin}]\em
Let $J: TM \Arrow TM$ be an almost complex structure on $M$ and $H \in \Omega^3_{cl}(M)$. Consider $\J: \T M \Arrow \T M$ given by
\begin{equation}\label{complex_complex}
\J = \left(\begin{array}{cc}
           -J & 0\\
           0 & J^*
          \end{array} \right).
\end{equation}
The corresponding almost generalized complex structure \eqref{L_complex} is
$$
L = T_{0, \, 1} \oplus \Ann{T_{0,\,1}}
$$
and its pure spinor line bundle is $\wedge^{n,0}T^*M$. $L$ defines a generalized Calabi-Yau structure if and only if the canonical line bundle $\wedge^{n,\,0} T^*M$ has a global nowhere-zero closed section $\varphi$ and $H \in \Omega^{3,0}(M)$. This implies that $J$ is integrable; moreover, both $\varphi$ and $H$ are holomorphic.
\end{exam}

The next example shows how a symplectic structure can be seen as a generalized Calabi-Yau structure.

\begin{exam}[\cite{gualt_thesis, hitchin}]\label{symp_calabi}\em
Let $(M,\omega)$ be a symplectic manifold. Define $\J: \T M \Arrow \T M$ by
$$
\J= \left(\begin{array}{cc}
           0 & \omega^{-1}_{\sharp}\\
           -\omega_{\sharp} & 0
          \end{array} \right).
$$
The corresponding Lagrangian subbundle \eqref{L_complex} is
$$
L= \{ X + i \,\omega_{\sharp} (X) \,\, | \,\, X \in \Gamma(TM)\otimes \C\};
$$
its pure spinor line bundle is generated by $e^{-i\omega} \in \Gamma(\Ext{T^*M}) \otimes \C$, which is nowhere-zero and closed. 
\end{exam}

In \cite{b_c_g}, Theorem \ref{reduced_dirac} is extended so as to encompass complex Dirac structures $L \subset E_{\C}$. More specifically, if $L|_N \cap K_{\C}$ has constant rank, then
$$
L_{red}= \left. \frac{L|_N \cap K_{\C}\p + K_{\C}}{K_{\C}} \right/G
$$
defines a complex Dirac structure on $(E_{\C})_{red}$, where $K_{\C}= K\otimes \C$ is the complexification of the isotropic subbundle \eqref{isotropic}. It is also proven in \cite{b_c_g} (see Lemma 5.1 therein) that if $L$ is the $+i$-eigenbundle of a generalized complex structure $\J : E \Arrow E$, then
\begin{equation}\label{complex_transv}
L_{red} \cap \overline{L}_{red} = 0 \Leftrightarrow \J K \cap K\p \subset K.  
\end{equation}

In what follows, fix an invariant generalized Calabi-Yau structure $L\subset E_{\C}$ on $M$ which is the $+i$-eigenbundle of $\J: E \Arrow E$ and a $(\mathcal{A}, \chi, N)$-admissible splitting $\nabla$. We now study conditions on $L$ which guarantees that $L_{red}$ is a generalized Calabi-Yau structure on $M_{red}$. A general theorem in this respect should address two questions:

\begin{itemize}
\item[(1)] Does $U_{\nabla_{red}}(L_{red})$ have a nowhere-zero global section $\varphi_{red}$?
\item[(2)] Is $\varphi_{red}$ closed under the $H_{red}$-twisted differential $d_{H_{red}}$?
\end{itemize}

With respect to the first question, we have the following proposition relating the first Chern classes of $U_{\nabla}(L)$ and $U_{\nabla_{red}}(L_{red})$.

\begin{prop}\label{chern_prop}
Suppose $L|_N \cap K_{\C}$ has constant rank. The Chern classes of $U_{\nabla}(L)$ and $U_{\nabla_{red}}(L_{red})$ are related by
\begin{equation}\label{chern_class}
c_1(U_{\nabla_{red}}(L_{red})) = c_1(U_{\nabla}(L|_N)/G) + c_1(\det(L|_N\cap K_{\C})^*/G).
\end{equation}
\end{prop}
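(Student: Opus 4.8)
\section*{Proof plan for Proposition~\ref{chern_prop}}

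The plan is to deduce the Chern-class identity \eqref{chern_class} from two $G$-equivariant isomorphisms of complex line bundles over $N$. Because $G$ acts freely on $N$, every $G$-equivariant line bundle over $N$ descends to a line bundle over $M_{red}=N/G$ (this is the meaning of the notation $\,\cdot\,/G$), and $c_1$ is additive under tensor products; so it suffices to exhibit a $G$-equivariant isomorphism
\[
q^* U_{\nabla_{red}}(L_{red}) \;\cong\; \bigl(U_{\nabla}(L)|_N\bigr)\otimes\det(L|_N\cap K_{\C})^*
\]
over $N$, then quotient by $G$ and take $c_1$. Throughout, fix an invariant $(\mathcal{A},\chi,N)$-admissible splitting $\nabla=\nabla_0+B$ and a perturbation input $D\subset (E_{\C})|_N$ for $(L,K_{\C})$, whose existence follows from the (complexified) lemma preceding Proposition~\ref{pertubation}.

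First I would identify the perturbed pure spinor line bundle. By Remark~\ref{inv_D} (applied verbatim with $K_{\C}$ in place of $K$), Clifford multiplication $\varphi\otimes\mathfrak{d}\mapsto \Pi_{\nabla}(\mathfrak{d})\varphi$ is a $G$-equivariant isomorphism $U_{\nabla}(L)|_N\otimes\det(D)\cong U_{\nabla}(L_D)$, where $L_D=L|_N\cap D\p+D$ as in \eqref{pertubed_dirac}. It then remains to identify $\det(D)$ with $\det(L|_N\cap K_{\C})^*$. Writing $K_L=L|_N\cap K_{\C}$ and recalling the construction of $D$, there is an invariant complement $K_L\p\oplus F=(E_{\C})|_N$ and an isomorphism $F\to D$, $e\mapsto e-\tfrac12 Ae$, so $\det(D)\cong\det(F)$. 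Since $0=(K_L\p\oplus F)\p=K_L\cap F\p$, the map $\langle\cdot,\cdot\rangle^{\sharp}:K_L\to F^*$ is a rank-matching isomorphism, giving $\det(F)\cong\det(K_L)^*$; all maps involved are $G$-equivariant by the invariance of $F$ and the relation $A\circ\Psi_g=\Psi_g\circ A$ established in the proof of that lemma. Hence $U_{\nabla}(L_D)\cong\bigl(U_{\nabla}(L)|_N\bigr)\otimes\det(L|_N\cap K_{\C})^*$, $G$-equivariantly.

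Next I would relate $U_{\nabla}(L_D)$ to the reduced pure spinor line bundle. As $D$ is a perturbation input, Proposition~\ref{pertubation} gives $L_D\cap K_{\C}=0$ and $L_D\cap K_{\C}\p+K_{\C}=L|_N\cap K_{\C}\p+K_{\C}$, so $L_{D,red}=L_{red}$. Fixing $\delta_{\frakg}\in\wedge^r\frakg$ with image $\delta\in\Gamma(\det\ker dq)$, the bundle map $C_{\delta}\circ dj^*$ from the proof of Theorem~\ref{main1} (pointwise \eqref{first_step_push}) carries $U_{\nabla}(L_D)$ into $q^*U_{\nabla_{red}}(L_{D,red})=q^*U_{\nabla_{red}}(L_{red})$; by Theorem~\ref{main_linear_contra} it is nowhere zero precisely because $L_D\cap K_{\C}=0$, hence a fibrewise isomorphism, and it is $G$-equivariant because $\delta$ transforms by $\det(\Ad_{g^{-1}})=1$ (this is exactly the computation \eqref{delta_eq} ensuring $x\mapsto\theta_x$ is constant on orbits). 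Thus $U_{\nabla}(L_D)\cong q^*U_{\nabla_{red}}(L_{red})$ $G$-equivariantly. The factor $e^B$ appearing in \eqref{reduced_spinor} only implements the change of splitting $U_{\nabla_0}\cong U_{\nabla}$ via \eqref{B_spin}, a bundle isomorphism that leaves $c_1$ unchanged.

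Combining the two isomorphisms yields the displayed $G$-equivariant identification over $N$; quotienting by the free $G$-action gives $U_{\nabla_{red}}(L_{red})\cong \bigl(U_{\nabla}(L)|_N/G\bigr)\otimes\bigl(\det(L|_N\cap K_{\C})^*/G\bigr)$ over $M_{red}$, and taking first Chern classes produces \eqref{chern_class}. The main obstacle is keeping careful track of the $G$-equivariance at every stage---especially verifying that the identification $\det(D)\cong\det(L|_N\cap K_{\C})^*$ respects the $Cl(\Psi_g)$-action so that it descends to the quotient---together with checking that $C_\delta\circ dj^*$ is genuinely a global fibrewise isomorphism rather than merely a pointwise nonvanishing assignment; both reduce to the invariance statements already recorded in the lemma preceding Proposition~\ref{pertubation} and in \eqref{delta_eq}.
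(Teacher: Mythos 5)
Your proof is correct, and its skeleton is the same as the paper's: both arguments choose a perturbation input $D$ for $(L,K_{\C})$, invoke Remark \ref{inv_D} to get the $G$-equivariant identification $U_{\nabla}(L_D)\cong U_{\nabla}(L|_N)\otimes\det(D)$, identify $\det(D)\cong\det(L|_N\cap K_{\C})^*$ via the pairing, and then recognize $U_{\nabla_{red}}(L_{red})$ as the $G$-quotient of $U_{\nabla}(L_D)$. The difference lies in how that last identification is implemented. The paper argues with \v{C}ech data: it picks a cover $\{\mathcal{W}_{\alpha}\}$ of $M_{red}$ trivializing $U_{\nabla}(L_D)/G$, takes invariant local sections $\varphi_{\alpha}$, applies $q_*\circ j^*$ to obtain nowhere-zero local sections $\varphi_{\alpha,\,red}$ of $U_{\nabla_{red}}(L_{red})$, and checks that the two bundles share the transition functions $\tilde{g}_{\alpha\beta}$, whence equality of $c_1$ by the standard Chern--Weil/partition-of-unity formula. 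You instead promote the pointwise map of Theorem \ref{main_linear_contra} to a single global bundle map $C_{\delta}\circ dj^*:U_{\nabla}(L_D)\Arrow q^*U_{\nabla_{red}}(L_{red})$, fibrewise nonzero because $L_D\cap K_{\C}=0$ and $G$-equivariant by \eqref{delta_eq}, and then descend; this yields an actual line bundle isomorphism $U_{\nabla}(L_D)/G\cong U_{\nabla_{red}}(L_{red})$, a slightly stronger conclusion than equality of Chern classes, and it dispenses with cocycle bookkeeping (the two mechanisms are consistent, since on invariant sections $q_*\circ j^*$ equals $C_{\delta}\circ dj^*$ up to the nonzero constant $\int_G\nu^L$). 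One further minor divergence: you obtain $\det(D)\cong\det(L|_N\cap K_{\C})^*$ by routing through the complement $F$ and the map $e\mapsto e-\tfrac{1}{2}Ae$ from the lemma constructing $D$, which ties the identification to that particular choice of $D$; the paper uses the direct map $D\ni e\mapsto\langle e,\cdot\rangle\in(L|_N\cap K_{\C})^*$, which is a $G$-equivariant isomorphism for an arbitrary perturbation input (injective since $D\cap(L|_N\cap K_{\C})\p=0$, bijective by rank count). Both are valid; the paper's version is choice-free and cleaner at that step, while your global-isomorphism route is arguably the more transparent way to see why the Chern classes must agree.
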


\begin{proof}
Choose a pertubation input $D \subset E_{\C}|_N$ for $(L, K_{\C})$. Let $\{\mathcal{W}_{\alpha}\}$ be a open cover of $M_{red}$ such that $U_{\nabla}(L_D)/G$ is trivial over each $\mathcal{W}_{\alpha}$. By choosing invariant sections $\varphi_{\alpha} \in \Gamma(U_{\nabla}(L_D)|_{q^{-1}(\mathcal{W}_{\alpha})})$, the corresponding cocycle
$
g_{\alpha \beta}: q^{-1}(\mathcal{W}_{\alpha}\cap \mathcal{W}_{\beta}) \Arrow \C^*
$
satisfies 
$$
g_{\alpha\beta} = \tilde{g}_{\alpha\beta} \circ q,
$$
where $\{\tilde{g}_{\alpha \beta}\}$ is a cocycle for $U_{\nabla}(L_D)/G$. By Theorem \ref{main_transv}, one has that
$$
\varphi_{\alpha, \, red}:=q_*(j^*\varphi_{\alpha})
$$
is a nowhere-zero section of $U_{\nabla_{red}}(L_{red})$ over $\mathcal{W}_{\alpha}$. Hence, over $\mathcal{W}_{\alpha} \cap \mathcal{W}_{\beta}$,
$$
\varphi_{\alpha, \, red} = q_*(j^*\varphi_{\alpha}) = q_*(q^* \tilde{g}_{\alpha \beta}\, j^*\varphi_{\beta}) = \tilde{g}_{\alpha \beta} \,q_*(j^*\varphi_{\beta}) = \tilde{g}_{\alpha \beta} \, \varphi_{\beta,\, red}, 
$$
which proves that $\{\tilde{g}_{\alpha \beta}\}$ is also a cocycle for $U_{\nabla_{red}}(L_{red})$. For a partition of unity $\{\rho_{\alpha}\}$ subordinate to $\{\mathcal{W}_{\alpha}\}$, one has
$$
c_1(U_{\nabla_{red}}(L_{red})|_{\mathcal{W}_{\alpha}} = -\frac{1}{2\pi i} \sum_{\gamma} d(\rho_{\gamma} \, d \log \tilde g_{\gamma \alpha})
= c_1(U_{\nabla}(L_D)/G)|_{\mathcal{W}_{\alpha}}.
$$
This shows that $c_1(U_{\nabla_{red}}(L_{red}) = c_1(U_{\nabla}(L_D)/G)$. By Remark \ref{inv_D}, one has that
$$
c_1(U_{\nabla}(L_D)/G) = c_1(U_{\nabla}(L|_N)/G) + c_1(\det(D)/G).
$$
The result now follows from the fact that the map $D \ni e \mapsto \langle e,  \cdot \rangle \in (L|_N \cap K_{\C})^*$  is an $G$-equivariant isomorphism.
\end{proof}

Let us give an example illustrating the role of quotienting out by $G$ in the right hand side of \eqref{chern_class}.

\begin{exam}\label{cp1}\em
Consider $M=\C^2$ and its canonical complex structure $J: TM\Arrow TM$. Let $S^1$ act on $M$ by 
$$
e^{i\theta} \cdot (z_1, z_2) = (e^{i\theta}z_1, e^{i\theta} z_2)
$$
and lift the action to $(\T M, \Cour{\cdot, \cdot}_{H})$ with zero moment one-form and $H=0$. Consider the invariant submanifold $N= S^3$ and let $K$ be the associated isotropic subbundle of $\T M|_N$ given by
$$
K= \{u_M \,\, |\,\, u \in \mathfrak{s}^1\}|_N \oplus \Ann{N}.
$$
The generalized complex structure $\J$ \eqref{complex_complex} corresponding to $J$ is $S^1$-invariant, 
$$
U(\J)= \wedge^{2,0} T^*M
$$ 
admits a global nowhere-zero closed section given by $\varphi= dz_1\wedge dz_2$. Hence $\J$ is generalized Calabi-Yau. It also satisfies $\J K \cap K\p=0$, which implies the transversality condition $L|_N \cap K_{\C}=0$. The reduced generalized complex structure $\J_{red}$ is just the usual complex structure on $\C P^1$. In this case,  Proposition \ref{chern_prop} says that
$$
c_1(\wedge^{2,0} T^*\C^2|_{S^3}/S^1) = c_1(\wedge^{1,0} T^* \C P^1).
$$
As $\wedge^{1,0} T^* \C P^1$ has non-vanishing Chern class, it follows from Proposition \ref{chern_prop} that although $\wedge^{2,0} T^*\C^2$ has a nowhere-zero global section it does not admit nowhere-zero  $S^1$-invariant sections.
\end{exam}

The question related to the existence of closed sections is more subtle. We give first steps towards answering (2).

\begin{thm}\label{transv_calabi}
If $\J K \cap K\p =0$ and $U_{\nabla}(L)$ has an invariant nowhere-zero $d_H$-closed section $\varphi$, then $L_{red}$ is a generalized Calabi-Yau structure.
\end{thm}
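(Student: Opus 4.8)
The plan is to verify the three defining properties of a generalized Calabi--Yau structure for $L_{red}$: that it is a generalized \emph{almost} complex structure, i.e. $L_{red}\cap\overline{L}_{red}=0$; that its pure spinor line $U_{\nabla_{red}}(L_{red})$ carries a global nowhere-zero section $\varphi_{red}$; and that this section is $d_{H_{red}}$-closed. The hypothesis $\J K\cap K\p=0$ is exactly what makes all three come out cleanly.

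For the first property, since $\J K\cap K\p=0\subset K$, the criterion \eqref{complex_transv} gives $L_{red}\cap\overline{L}_{red}=0$ at once. I would then note that $\J K\cap K\p=0$ in fact forces the \emph{transversality} condition $L|_N\cap K_{\C}=0$: if $0\neq v\in L|_N\cap K_{\C}$, write $v=a+ib$ with $a,b\in K$ real; the eigenvalue equation $\J v=iv$ yields $\J a=-b\in K$, so $\J a\in \J K\cap K\p$ (using $K\subset K\p$), whence $\J a=0$, and since $\J$ is invertible $v=0$, a contradiction. This is the decisive structural point: transversality means the only perturbation input $D$ for $(L,K_{\C})$ is the zero subbundle, so \emph{no} Clifford perturbation $\varphi\mapsto\varphi_D$ intervenes and we may work with $\varphi$ itself.

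For the second property I would apply Theorem \ref{main1} (the transversal case) with $\nabla_0=\nabla$ already admissible and $B=0$, everything extended $\C$-linearly to the complexified setting, so that
$$
\varphi_{red}=q_*\circ j^*\varphi
$$
is, by the transversality $L|_N\cap K_{\C}=0$ just established, a nowhere-zero global section of $U_{\nabla_{red}}(L_{red})$ over $M_{red}=N/G$. It then remains only to show that $\varphi_{red}$ is $d_{H_{red}}$-closed, which is the heart of the argument.

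I would establish closedness by chasing $d_H\varphi=0$ through $j^*$ and $q_*$. Pullback intertwines the twisted differentials with no sign, $j^*\circ d_H=d_{j^*H}\circ j^*$. By Proposition \ref{basic_curvature} the curvature $H$ of the admissible splitting lies in $\Omega^3_{bas,\,N}(M)$, so $j^*H$ is basic and, by Remark \ref{reduced_curvature}, $j^*H=q^*H_{red}$. Finally, the push-forward $q_*$ along the principal $G$-bundle $q:N\Arrow M_{red}$ is integration over the closed fibers $G$; with the invariant normalization of \eqref{push_def} it commutes with the de Rham differential, $d\circ q_*=q_*\circ d$, and obeys the projection formula $q_*(q^*\alpha\wedge\beta)=\alpha\wedge q_*\beta$. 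Assembling these,
$$
d_{H_{red}}\varphi_{red}=q_*\big(j^*(d\varphi)\big)-q_*\big(q^*H_{red}\wedge j^*\varphi\big)=q_*\,j^*(d_H\varphi)=0,
$$
so $\varphi_{red}$ is a nowhere-zero $d_{H_{red}}$-closed section and $L_{red}$ is generalized Calabi--Yau. The main obstacle is the last step: verifying that $q_*$ interacts correctly with $d$ and with $\wedge\,q^*(\cdot)$ on the \emph{inhomogeneous} spinor $j^*\varphi$, i.e. the sign bookkeeping for fiber integration over the $G$-fibers. This is precisely where compactness and connectedness of $G$ (unimodularity, $\det\Ad_g=1$) and the invariance of $\varphi$ enter, guaranteeing that the push-forward is well defined and a chain map. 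Note that the entire argument lives in the transversal regime $D=0$: the absence of a perturbation is exactly what lets $d_H$ pass through $q_*\circ j^*$, and the non-transversal case would instead demand control of $d_H$ against the Clifford perturbation $\varphi\mapsto\varphi_D$.
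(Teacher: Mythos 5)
Your proposal is correct and follows essentially the same route as the paper: deduce the transversality $L|_N\cap K_{\C}=0$ from $\J K\cap K\p=0$ (the paper does this via the identity $L|_N\cap K_{\C}=\{e-i\J e \,\,|\,\, e\in K\cap \J K\}$, you via the eigenvalue equation --- the same computation), then apply the transversal-case theorem to get $\varphi_{red}=q_*\circ j^*\varphi$ nowhere-zero, push $d_H$-closedness through $j^*$ and $q_*$ using $d\circ q_*=q_*\circ d$, $q^*H_{red}=j^*H$ and the projection formula, and finally invoke \eqref{complex_transv} for $L_{red}\cap\overline{L}_{red}=0$. No gaps; the extra remarks about the vanishing perturbation input match the paper's own observation.
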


\begin{proof}
First note that $L|_N \cap K_{\C} = \{ e - i\J e\,\, | \,\, e \in K\cap \J K\}$. Hence, $\J K \cap K\p =0$ implies that $L|_N \cap K_{\C}$. By Theorem \ref{main_transv}, 
$$
\varphi_{red} = q_*(j^*\varphi)
$$
is a nowhere-zero global section of $U_{\nabla_{red}}(L_{red})$. Now, as $d$ commutes with $q_*$ (see \cite{b_t}), one has
$$
d_{H_{red}}\varphi_{red} = q_*(j^* d\varphi) - H_{red} \wedge q_*(j^*\varphi).
$$
Finally, using that $q^*H_{red} = j^*H$, one has that $H_{red} \wedge q_*(j^*\varphi) = q_*(j^*(H\wedge \varphi))$. Hence,
$$
d_{H_{red}}\varphi_{red} = q_*(j^* d_H \varphi) = 0.
$$
The fact that $L_{red}\cap \overline{L}_{red} =0$ follows from \eqref{complex_transv}.
\end{proof}

\begin{exam}\em
In the same setting of Example \ref{cp1}, note that by considering the invariant open set
$
\mathcal{W}_1 = \{(z_1, z_2) \in \C^2 \,\, | \,\, z_1\neq 0\},
$
instead of the whole $\C^2$, we now have that 
$U(\J)=\wedge^{2,0} T^* \mathcal{W}_1$ has a nowhere-zero $S^1$-invariant closed section given by
$$
\varphi_1 = \frac{1}{z_1^2} \, dz_1 \wedge dz_2.
$$
The corresponding nowhere-zero closed section of $U(\J_{red})$ over $q(\mathcal{W}_1)$ is 
$$
q_*(j^*\varphi_1) = dz,
$$
where $z:q(\mathcal{W}_1) \Arrow \C$ is the coordinate
$$
z: [z_1, z_2] \mapsto \frac{z_2}{z_1}.
$$  
\end{exam}

In the case $\J K \cap K\p \neq 0$, we have to put restrictions on the intersection $L|_N \cap K_{\C}$.  

\begin{thm}\label{gen_calabi_thm}
Suppose $L|_N \cap K_{\C}$ has  constant rank and that  $U_{\nabla}(L_D)$ has an invariant nowhere-zero $d_H$-closed section $\varphi$. If $\J K \cap K\p \subset K$ and $p: L|_N \cap K_{\C} \Arrow \ker(dq) \otimes \C$ is an isomorphism , then $L_{red}$ is a generalized Calabi-Yau structure on $M_{red}$.
\end{thm}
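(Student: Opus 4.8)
The plan is to verify the two defining properties of a generalized Calabi--Yau structure for $L_{red}$: that it is generalized almost complex, i.e. $L_{red}\cap\overline{L}_{red}=0$, and that $U_{\nabla_{red}}(L_{red})$ admits a global nowhere-zero $d_{H_{red}}$-closed section. The first property is immediate from the hypothesis $\J K\cap K\p\subset K$ and the criterion \eqref{complex_transv} of \cite{b_c_g}, which together give $L_{red}\cap\overline{L}_{red}=0$; thus $L_{red}$ is the $+i$-eigenbundle of a reduced generalized almost complex structure $\J_{red}$, and all that remains is to produce the closed reduced pure spinor.

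First I would pass to the transversal setting by perturbing. Since $L|_N\cap K_{\C}$ has constant rank, I choose a pertubation input $D\subset E_{\C}|_N$ for $(L,K_{\C})$ and form $L_D=L|_N\cap D\p+D$. By Proposition \ref{pertubation}, $L_D$ is an invariant Lagrangian subbundle satisfying $L_D\cap K=0$ and $L_D\cap K\p+K=L|_N\cap K\p+K$, so that $(L_D)_{red}=L_{red}$. The hypothesis supplies an invariant, nowhere-zero, $d_H$-closed section $\varphi$ of $U_\nabla(L_D)$, which one should think of as the perturbed spinor $\Pi_{\nabla}(\mathfrak{d})\varphi_L$ of the original Calabi--Yau spinor $\varphi_L$. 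Here the assumption that $p\colon L|_N\cap K_{\C}\lrrow\ker(dq)\otimes\C$ is an isomorphism records that the non-transverse directions of $L$ anchor exactly onto the orbit directions $\ker(dq)$; this is the standing condition under which the perturbation-and-reduction of a Calabi--Yau spinor is rank-consistent and $D$ may be chosen compatibly.

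Next, because $L_D\cap K=0$ and $\nabla$ is $(\mathcal{A},\chi,N)$-admissible, I would apply Theorem \ref{main1} to $L_D$ (taking $\nabla_0=\nabla$, so $B=0$), exactly as in the proof of Theorem \ref{main_geral}: then $\varphi_{red}=q_*\circ j^*\varphi$ is a nowhere-zero section of $U_{\nabla_{red}}((L_D)_{red})=U_{\nabla_{red}}(L_{red})$. For closedness I would argue as in Theorem \ref{transv_calabi}: fiber integration over the compact connected fibers of the principal bundle $q$ commutes with $d$, pullback commutes with $d$, and the projection formula together with $q^*H_{red}=j^*H$ gives $H_{red}\wedge q_*(j^*\varphi)=q_*\bigl(j^*(H\wedge\varphi)\bigr)$; hence
\[
d_{H_{red}}\varphi_{red}=q_*\bigl(j^*(d\varphi)\bigr)-q_*\bigl(j^*(H\wedge\varphi)\bigr)=q_*\bigl(j^*(d_H\varphi)\bigr)=0.
\]
Therefore $\varphi_{red}$ is a global nowhere-zero $d_{H_{red}}$-closed section of $U_{\nabla_{red}}(L_{red})$, so $L_{red}$ is generalized Calabi--Yau.

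The hard part is the closedness, and in particular the fact that the $d_H$-closed section is hypothesized for $L_D$ rather than for $L$ itself. The transversality-repairing passage $\varphi_L\mapsto\varphi_D=\Pi_{\nabla}(\mathfrak{d})\varphi_L$ is a Clifford multiplication and has no reason to commute with $d_H$, so a closed spinor for $L$ need not reduce to a closed spinor for $L_D$; this is precisely the missing control of the de Rham differential against the perturbation noted in the introduction. Once a $d_H$-closed section is available on $L_D$, the push-forward computation above is routine, so the genuine difficulty is isolated in that compatibility, which the statement encodes as a hypothesis.
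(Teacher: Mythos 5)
Your proposal diverges from the paper's proof at the crucial point, and the divergence comes from taking the (admittedly confusingly worded) hypothesis at face value. In the paper's proof, $\varphi$ is used throughout as the invariant, nowhere-zero, $d_H$-closed spinor of $L$ itself, \emph{not} of $L_D$: Theorem \ref{main_geral} is invoked with $\varphi \in \Gamma(U_{\nabla}(L)|_N)$ and $\varphi_{red}=q_*(j^*\Pi_{\nabla}(\mathfrak{d})\varphi)$; the basicness step requires $\Pi_{\nabla}(k_u)\varphi=0$ for the sections $k_u$ of $L|_N\cap K_{\C}$, which is impossible for a spinor of $L_D$ since $L_D\cap K_{\C}=0$; and the final computation uses $j^*(d\varphi-H\wedge\varphi)=0$, i.e.\ closedness of the spinor of $L$. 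Your reading, by contrast, hypothesizes a closed spinor for the already-transversal $L_D$ and then (as you candidly observe) the proof reduces to the routine push-forward of Theorem \ref{transv_calabi}. Three symptoms show this cannot be the intended theorem: (1) your argument never actually uses the hypothesis that $p:L|_N\cap K_{\C}\Arrow\ker(dq)\otimes\C$ is an isomorphism --- your sentence that it makes the construction ``rank-consistent'' and lets $D$ ``be chosen compatibly'' is not an argument, and an unused hypothesis should have been a red flag; (2) a section of $U_{\nabla}(L_D)$ is defined only along $N$ (it is a section of $\Ext{T^*M}|_N\otimes\C$ over $N$), so ``$d_H$-closed'' does not typecheck for it without reinterpretation; (3) read your way, the theorem would not recover Nitta's reduction (Example \ref{nitta_exam}), which is its stated purpose, since there one only knows closedness of the spinor of $L$, never of a perturbed spinor.

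The idea you are missing is that the paper circumvents, rather than assumes, the incompatibility of $d_H$ with $\varphi\mapsto\varphi_D$, by a very particular choice of $D$. One takes a connection $1$-form $\theta\in\Omega^1(N,\frakg)$ for $q:N\Arrow M_{red}$, an invariant complement $QN$ of $TN$ in $TM|_N$, and sets $D=p^*(\Ann{\ker(\theta)\oplus QN})\otimes\C$. The isomorphism hypothesis is used exactly twice: first to show $(L|_N\cap K_{\C})\cap D\p=0$, which with the dimension count $\dim_{\C}(L|_N\cap K_{\C})=\dim_{\C}(D)=\dim(G)$ makes $D$ a perturbation input; second to produce, for each $u\in\frakg$, the unique $k_u\in\Gamma(L|_N\cap K_{\C})$ with $p(k_u)=u_M$, which by admissibility of $\nabla$ has the form $k_u=\nabla u_M+p^*\eta$ with $\eta\in\Ann{TN}\otimes\C$, whence $0=j^*\Pi_{\nabla}(k_u)\varphi=i_{u_N}j^*\varphi$, i.e.\ $j^*\varphi$ is basic. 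Taking $\mathfrak{d}=p^*(\tilde{\theta}_1\wedge\cdots\wedge\tilde{\theta}_r)$ gives $\varphi_{red}=q_*(\theta_1\wedge\cdots\wedge\theta_r\wedge j^*\varphi)$, and basicness plus Lemma \ref{push_connection} collapses this to $\left(\int_G\nu\right)\varphi_0$ where $q^*\varphi_0=j^*\varphi$; since $q^*$ is injective, $d_{H_{red}}\varphi_{red}=0$ follows from $j^*(d_H\varphi)=0$. So the perturbation contributes only a constant factor after push-forward, and closedness of the \emph{unperturbed} spinor suffices --- this is the actual content of the theorem. Only your treatment of $L_{red}\cap\overline{L}_{red}=0$ via \eqref{complex_transv} coincides with the paper.
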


Before proving Theorem \ref{gen_calabi_thm}, let us show how it recovers Nitta's result \cite{nitta}.

\begin{exam}[\cite{nitta}]\label{nitta_exam}\em
Consider the isotropic $G$-lifted action on $(\T M, \Cour{\cdot, \cdot})$ of Example \ref{model} with zero moment one-form and zero 3-form. Consider an invariant generalized Calabi-Yau structure $\J : \T M \Arrow \T M$  and suppose there exists an equivariant map $\mu: M \Arrow \frakg^*$
(with respect to the co-adjoint action) such that
\begin{equation}\label{nitta_cond}
\J u_M =  d\mu^u, \,\, \forall \, u\in \frakg.
\end{equation}
Assume $0$ is a regular value of $\mu$ and take the invariant submanifold $N=\mu^{-1}(0)$. Now, \eqref{nitta_cond} implies that
\begin{equation}\label{l_cap_k}
L|_{\mu^{-1}(0)} \cap K_{\C} = \{ (u_M + i\,v_M, \, d\mu^v - i\, d\mu^{u}) \,\, | \,\, u, \, v \in \frakg\},
\end{equation}
where $L$ is the $+i$-eigenbundle of $\J$. If one assumes, as usual, that $G$ acts freely on $\mu^{-1}(0)$, then the restriction of $\pr_{TM}$ to $L|_{\mu^{-1}(0)}\cap K_{\C}$ is an isomorphism over $\ker(dq)\otimes\C$. Moreover, as \eqref{nitta_cond} implies that $\J K = K$, it follows that $\J K \cap K\p= K$. This is exactly the setting in which Y. Nitta \cite{nitta} performed the reduction of generalized Calabi-Yau structures. We refer to \cite{nitta} to see applications of his result to Duistermaat-Heckmann type formulas for generalized Calabi-Yau structures.
\end{exam}

For the proof of Theorem \ref{gen_calabi_thm} we shall need a Lemma concerning the push-forward map. 

\begin{lem}\label{push_connection}
Let $\theta \in \Omega^1(N, \frakg)$ be a connection 1-form on $N$ and let $\ker(\theta):=\{X \in TN \,\, | \,\, i_X \theta =0\}$. For a basis $\{u^1, \cdots, u^r\}$ of\, $\frakg$, 
consider the decomposition $\theta = \sum_{i=1}^r \theta_i u^i$, where $\theta_i \in \Omega^1(N)$. One has that $\theta_{[1,r]} = \theta_1 \wedge \cdots \wedge \theta_r$ is an invariant section of $\Ann{\ker(\theta)}$ and
$$
q_*(\theta_{[1,r]}) = \int_G \nu,
$$
where $\nu \in \det(T^*G)$ is the left-invariant volume form on $G$ such that $i_{u^r \wedge \cdots \wedge u^1} \nu (e) = 1$.
\end{lem}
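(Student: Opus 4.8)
The plan is to treat the two assertions separately, handling the algebraic statement first and then the push-forward computation. For the first assertion I would start from the defining property of a connection $1$-form, namely that it vanishes on horizontal vectors, so $i_X\theta_i=0$ for every $X\in\ker(\theta)$ and every $i$. Hence each $\theta_i$ is a section of $\Ann{\ker(\theta)}$ and $\theta_{[1,r]}=\theta_1\wedge\cdots\wedge\theta_r$ is a section of $\det\Ann{\ker(\theta)}$; it is nowhere zero because the $\theta_i$ restrict to a coframe of the vertical bundle, being dual to the fundamental vector fields $(u^i)_N$ (as I record below). For the invariance I would invoke the equivariance $\psi_g^*\theta=\Ad_{g^{-1}}\theta$ of the connection form. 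Writing $\theta=\sum_i\theta_i\,u^i$ and comparing components gives $\psi_g^*\theta_j=\sum_i(\Ad_{g^{-1}})_{ji}\,\theta_i$, whence
$$
\psi_g^*\theta_{[1,r]}=\det(\Ad_{g^{-1}})\,\theta_{[1,r]}=\theta_{[1,r]},
$$
the last equality because $\det\Ad_{g^{-1}}=1$ for a compact connected group --- exactly the fact already used around \eqref{delta_eq}.

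For the push-forward I would compute $q_*(\theta_{[1,r]})$ directly from the local definition \eqref{push_def}. Over an open set $\mathcal{U}\subset M_{red}$ with $N|_{q^{-1}(\mathcal{U})}\cong\mathcal{U}\times G$, the connection form pulls back, in this trivialization, to a term coming from the base (a $q^*$-form) plus the Maurer--Cartan form of $G$ (a $\pr_G^*$-form), component by component. Expanding $\theta_{[1,r]}=\bigwedge_i\theta_i$, every term containing at least one base factor has fibre degree at most $r-1$; by \eqref{push_def} these are of type (II) and push forward to zero. The only surviving term is the pure Maurer--Cartan wedge $\pr_G^*\nu$, which is of type (I) with constant coefficient and base factor $\beta=1$, so $q_*(\theta_{[1,r]})=\int_G\nu$.

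It then remains to identify this $\nu$ with the normalized invariant volume form in the statement. Since $\theta((u^j)_N)=u^j$ by definition of a connection form, one has $\theta_i((u^j)_N)=\delta_{ij}$, so the restriction of $\theta_{[1,r]}$ to a fibre satisfies $\theta_{[1,r]}((u^1)_N,\dots,(u^r)_N)=\det(\delta_{ij})=1$, which is precisely the normalization $i_{u^r\wedge\cdots\wedge u^1}\nu(e)=1$. Because $G$ is compact, hence unimodular, this invariant top form is simultaneously left- and right-invariant, so it is the left-invariant $\nu$ of the statement, and the computation above yields the claim.

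I expect the main obstacle to be the bookkeeping in this last step: verifying that exactly the Maurer--Cartan term contributes to the fibre-top degree (all other terms being type (II)), and pinning down the orientation and normalization --- in particular the reversed order $u^r\wedge\cdots\wedge u^1$ together with the fundamental-vector-field convention --- so that the surviving volume form matches $\nu$ on the nose rather than up to a sign. The appeal to unimodularity of $G$ is what reconciles the fact that the fundamental vector fields of the action are invariant on one side with the left-invariance asserted for $\nu$.
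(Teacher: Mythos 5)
Your proposal is correct and follows essentially the same route as the paper: invariance from $\psi_g^*\theta=\Ad_{g^{-1}}\theta$ together with $\det(\Ad_{g^{-1}})=1$, and the push-forward computed in a local trivialization by splitting each $\theta_i$ into a horizontal piece plus the left-invariant (Maurer--Cartan) piece $\pr_G^*\,\xi_i^L$, so that after expanding the wedge only the pure fibre term $\pr_G^*\,\nu$ is of type (I) and survives under \eqref{push_def}. The paper realizes the same decomposition by setting $\alpha_i:=\theta_i-\pr_G^*\,\xi_i^L$ and checking $i_{u_N}\alpha_i=0$ directly, which is your ``local form of a connection'' step phrased without coordinates.
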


\begin{proof}
The fact that $\theta_{[1,r]}$ is invariant follows directly from the invariance of $\theta$,
$$
\varphi^*_g \theta = \sum_{i=1}^r \theta_i \,Ad_{g^{-1}}(u^i), \,\, g\in G,
$$
and the fact that $\det(Ad_{g^{-1}})=1$ (as $G$ is compact and connected). As for the second statement, let $\mathcal{U}$ be an open set of $M_{red}$ such that $\pi^{-1}(\mathcal{U})\cong \mathcal{U} \times G$. Consider  the basis $\{\xi_1, \dots, \xi_r\}$ of $\frakg^*$ dual to $\{u^1, \dots, u^r\}$. Define $\alpha_i:=\theta_i - \pr_G^*\, \xi_i^L$; it is straightforward to check that $i_{u_N} \alpha_i=0$, for $i=1, \dots, r$ and $u\in \frakg$. Thus, by expanding
$$
\theta_1 \wedge \cdots \wedge \theta_r = \left(\alpha_1 + \pr_G^*\,\xi_1^L\right)\wedge\cdots\wedge\left(\alpha_r + \pr_G^*\,\xi_r^L\right),
$$
we see that
$$
\theta_1 \wedge \cdots \wedge \theta_r = \pr_G^* \,\nu + \text{forms of type} \,(II)
$$
The result now follows from \eqref{push_def}.
\end{proof}

\begin{proof}[Proof of Theorem \ref{gen_calabi_thm}.]
First note that the hypothesis that $p: L|_N \cap K_{\C}\Arrow \ker(dq)\otimes \C$ is an isomorphism implies that $L|_N \cap K_{\C}$ has constant (complex) rank equal to $\dim(G)$. Let $L_{red} \subset E_{red}\otimes \C$ be the reduced complex Dirac structure. To find the pure spinor line bundle of $L_{red}$, we must use the pertubative method of \S 3.2.2 as $L|_N \cap K_{\C} \neq 0$. For this, let $\theta \in \Omega^1(N, \frakg)$ be a connection 1-form and choose an invariant complement $QN \subset TM|_N$ for $TN$. Define
$$
D= p^*(\Ann{\ker(\theta) \oplus QN}) \otimes \C.
$$
It is an invariant isotropic subbundle of $E_{\C}|_N$. We claim that $D$ is a pertubation input for $(L, K_{\C})$ (i.e. $(L|_N \cap K_{\C})\oplus D\p = E_{\C}|_N$). Indeed, let $e \in E_{\C}$. Then
$$
\begin{array}{rl}
\langle e, p^*\xi \rangle = 0 , \, \forall \, p^*\xi \in D & \Leftrightarrow \xi(p(e)) = 0, \, \forall \, \xi \in \Ann{\ker(\theta) \oplus QN} \otimes \C\vspace{2pt}\\
&  \Leftrightarrow p(e) \in (\ker(\theta)\oplus QN)\otimes \C.
\end{array}
$$
As $p: L|_N \cap K_{\C} \Arrow \ker(dq)\otimes \C$ is an isomorphism and $\ker(dq) \cap (\ker(\theta)\oplus QN) =0$, it follows that $(L|_N \cap K_{\C})\cap D\p = 0$. Our claim now follows from dimension count as $\dim_{\C}(L|_N \cap K_{\C}) = \dim_{\C}(D) = \dim(G)$. Write $\theta=\sum_{i=1}^r \theta_i u^i$, where $\{u_1, \dots, u_r\}$ is a basis of $\frakg$ and extend $\theta_i \in \Omega(N)$ to $\tilde{\theta}_i \in \Gamma(\Ext{T^*M}|_N)$ by  $\tilde{\theta}_i|_{QN} \equiv 0$. By Lemma \ref{push_connection}, $\mathfrak{d} := p^*(\tilde{\theta}_1 \wedge \cdots \wedge \tilde{\theta}_r) \in \Gamma(\det(D))$ is an invariant section. So, Theorem \ref{main_geral} gives that 
$$
\varphi_{red} = q_*(j^* \Pi_{\nabla}(\mathfrak{d}) \varphi) = q_*(\theta_1 \wedge \cdots \wedge \theta_r \wedge j^*\varphi)
$$
is a nowhere-zero global section of $U_{\nabla_{red}}(L_{red})$. We claim that $j^*\varphi$ is a basic form. Indeed, it is invariant by hypothesis. Now, let $u \in \frakg$ and consider $k_u$, the unique section of $L|_N \cap K_{\C}$ such that $p(k_u) = u_M$. As $\nabla$ is $(\mathcal{A}, \chi, N)$-admissible, we have that
$$
k_u = \nabla u_M + p^*\eta, 
$$
where $\eta \in \Ann{TN}\otimes \C$ (see Remark \ref{K_nabla}). Hence, as $k_u \in L|_N$, 
$$
0=j^* \Pi_{\nabla}(k_u) \varphi = i_{u_N} j^*\varphi + j^*\eta \wedge j^* \varphi = i_{u_N}j^*\varphi,
$$
as we claimed. Let $\varphi_0 \in \Omega(M_{red})$ be such that $q^*\varphi_0 = j^*\varphi$. By Lemma \ref{push_connection}, we have that
$$
\varphi_{red} = q_*(\theta_1 \wedge \cdots \wedge \theta_r \wedge q^*\varphi_0) = q_*(\theta_1 \wedge \cdots \wedge \theta_r)\, \varphi_0 =\left(\int_G \nu\right) \varphi_0,
$$
where $\nu$ is the left-invariant volume form on $G$ such that $i_{u_r \wedge \cdots \wedge u_1}\nu(e)=1$. Finally,
$$
\begin{array}{rl}
(\int_G \nu)^{-1} q^*(d\varphi_{red} - H_{red} \wedge \varphi_{red}) = & dq^*\varphi_0 - q^*H_{red} \wedge q^*\varphi_0 \vspace{2pt}\\
=&   dj^* \varphi - j^*H \wedge j^*\varphi\vspace{2pt}\\
=&   j^*(d\varphi - H\wedge \varphi) = 0,
\end{array}
$$
which implies that $d\varphi_{red} - H_{red} \wedge \varphi_{red} =0$ (we have used that $q^*H_{red}= j^*H$, see Remark \ref{reduced_curvature}). Hence, $U_{\nabla_{red}}(L_{red})$ has a nowhere-zero global $d_{H_{red}}$-closed section $\varphi_{red}$. The fact that $L_{red}$ is a generalized complex structure follows from \eqref{complex_transv}. This completes the proof.
\end{proof}

\subsection{T-duality.}

In this subsection, we explore the striking similarity of formula \eqref{main_spin} with the T-duality map introduced in \cite{bou_ev}. More precisely, we show how formula \eqref{main_spin} gives an alternative explanation of the fact that the T-duality map preserves pure spinors. This will be based on recent results obtained by G. Cavalcanti and M. Gualtieri \cite{c_g, cavalcanti} relating T-duality with reduction procedure of \cite{b_c_g}.

Let $\pi_1: P_1 \Arrow N$ be a principal circle bundle with an invariant closed integral 3-form $H_1 \in \Omega^3(P_1)$ and a connection 1-form $\theta_1 \in \Omega^1(P_1)$ . We have identified $\mathfrak{s}^1 \cong \R$ in such a way that $\pi_1\vphantom{}_* \theta_1=1$ (see Lemma \ref{push_connection}). Define
$$
c_2 := \pi_1\vphantom{}_* H \in \Omega^2(N)
$$
and let $c_1 \in \Omega^2(N)$ be the curvature of $P_1$ (i.e. $\pi^*c_1 = d\theta_1$). There exists $h\in \Omega^3(N)$ such that 
\begin{equation}\label{H1}
H_1= \pi_1^* c_2 \wedge \theta_1 + \pi_1^*h.
\end{equation}
By general properties of the push-forward map, $[c_2] \in H^2(N, \mathbb{Z})$. Therefore, there exists a principal circle bundle $\pi_2: P_2\Arrow N$ with a connection 1-form $\theta_2 \in \Omega^1(P_2)$ whose curvature is $c_2$.
Define
$$
H_2 = \pi_2^* c_1 \wedge \theta_2 + \pi_2^*h \in \Omega^3(P_2).
$$
$(P_2, \theta_2, H_2)$ is called the \textit{T-dual space} corresponding to $(P_1, \theta_1, H_1)$. We refer to \cite{bou_ev} for the physical interpretation of T-duality and examples of T-dual spaces (see also \cite{c_g,rosenberg})

Given T-dual spaces $(P_1,\theta_1,  H_1)$ and $(P_2, \theta_2, H_2)$,  define the \textit{correspondence space} to be the fiber product $M= P_1 \times_N P_2$ of $P_1$ and $P_2$. The natural projections $q_1: M \Arrow P_1$ and $q_2: M \Arrow P_2$, which make the diagram below commutative,
$$
\xy
(0,10)*++^{P_1}="P_1"; (10,0)*+^{N}="N"; (10,20)*+^{M}="M"; (20,10)*++^{P_2}="P_2";
{\ar@{->}_{q_1}"M"; "P_1"}; 
{\ar@{->}^{q_2}"M"; "P_2"};
{\ar@{->}_{\pi_1}"P_1"; "N"};
{\ar@{->}^{\pi_2}"P_2"; "N"};
\endxy
$$
also give $M$ the structure of a principal circle bundle over $P_1$ and $P_2$ respectively. Let $\Omega_{S^1}(P_i)$ be the space of invariant differential forms on $P_i$ and consider the $H_i$-twisted differential $d_{H_i}$, for $i=1,2$. As $H_1$ is invariant, then $d_{H_1}$ restricts to $\Omega(P_1)^{S^1}$ turning it into a differential complex (similarly for $d_{H_2}$ and $\Omega(P_2)^{S^1}$).

\begin{thm}\cite{bou_ev}
Let $B= q_1^*\theta_1 \wedge q_2^*\theta_2 \in \Omega^2(M)$. The map $\tau: (\Omega(P_1)^{S^1}, d_{H_1}) \Arrow (\Omega(P_2)^{S^1}, d_{H_2})$ defined by
\begin{equation}\label{tau_def}
\tau = q_2 \vphantom{}_* \circ e^{B} \circ q_1^*
\end{equation}
is an isomorphism of differential complexes.
\end{thm}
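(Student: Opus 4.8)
The plan is to prove three things in turn: that $\tau$ carries invariant forms to invariant forms, that it intertwines the two twisted differentials, and that it is a bijection. The conceptual heart of the whole argument is a single identity on the correspondence space $M$ relating the three pieces of twisting data,
\begin{equation}
dB + q_1^*H_1 = q_2^*H_2. \tag{$\star$}
\end{equation}
Everything else is bookkeeping around this identity. I first establish $(\star)$ by direct computation. Writing $\hat\theta_i = q_i^*\theta_i$ and $p = \pi_1\circ q_1 = \pi_2\circ q_2 : M \Arrow N$, the structure equations $\pi_i^*c_i = d\theta_i$ give $dB = p^*c_1\wedge\hat\theta_2 - \hat\theta_1\wedge p^*c_2$. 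Pulling back the defining expressions $H_1 = \pi_1^*c_2\wedge\theta_1 + \pi_1^*h$ and $H_2 = \pi_2^*c_1\wedge\theta_2 + \pi_2^*h$ yields $q_1^*H_1 = \hat\theta_1\wedge p^*c_2 + p^*h$ and $q_2^*H_2 = p^*c_1\wedge\hat\theta_2 + p^*h$. Adding the first two, the $\hat\theta_1\wedge p^*c_2$ terms cancel and $(\star)$ drops out as an equality of honest forms on $M$.

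For the chain-map property I would compute $d_{H_2}(\tau\alpha)$ using that push-forward commutes with $d$ (\cite{b_t}), the Leibniz rule $d(e^B\wedge\omega) = dB\wedge e^B\wedge\omega + e^B\wedge d\omega$ (valid since $de^B = dB\wedge e^B$), and the projection formula $H_2\wedge q_{2*}(\mu) = q_{2*}(q_2^*H_2\wedge\mu)$. Since $e^B$ is a sum of even-degree forms it is central for the wedge product, so after collecting terms one finds
\[
d_{H_2}(\tau\alpha) - \tau(d_{H_1}\alpha) = q_{2*}\big(e^B\wedge(dB + q_1^*H_1 - q_2^*H_2)\wedge q_1^*\alpha\big),
\]
which vanishes by $(\star)$. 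Thus $\tau\circ d_{H_1} = d_{H_2}\circ\tau$, and this step is robust: it uses $(\star)$ only as an equality of forms, not any normalization.

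For well-definedness and bijectivity I would invoke the standard decomposition of an invariant form on a principal circle bundle: every $\alpha\in\Omega_{S^1}(P_1)$ can be written $\alpha = \pi_1^*a + \theta_1\wedge\pi_1^*b$ with $a,b\in\Omega(N)$. Because $B^2 = 0$ we have $e^B = 1 + \hat\theta_1\wedge\hat\theta_2$, and integrating over the $q_2$-fibre (normalised so that $q_{2*}\hat\theta_1 = 1$, which follows from $\pi_1{}_*\theta_1 = 1$) a short computation gives $\tau\alpha = \pm\pi_2^*b \pm \theta_2\wedge\pi_2^*a$. This is manifestly an invariant form on $P_2$, establishing well-definedness, and it displays $\tau$ as precisely the map that exchanges the two circle directions, $a\leftrightarrow b$ and $\theta_1\leftrightarrow\theta_2$; hence $\tau$ is a bijection, with inverse given up to sign by the symmetric map $q_{1*}\circ e^{-B}\circ q_2^*$. (Note that $\tau$ reverses parity, so this is an isomorphism of the $\mathbb{Z}_2$-graded twisted complexes.)

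The routine-but-delicate part, and the only real place where care is needed, is sign bookkeeping: the precise signs in the projection formula and in the commutation $dq_{2*} = q_{2*}d$ for the odd-dimensional circle fibre, together with the orientation conventions behind $q_{2*}\hat\theta_1 = 1$. As noted above these signs never enter the vanishing forced by $(\star)$, so they affect only the explicit formula for $\tau$ and the identification of its inverse; the chain-map statement is immune to them. I would finally remark that the operator $q_{2*}\circ e^{B}\circ q_1^*$ is exactly the form-level analogue of the reduced-pure-spinor formula \eqref{main_spin}, so this computation exhibits the T-duality isomorphism of \cite{bou_ev} as an instance of the mechanism developed in \S 3, which is the sense in which it recovers \cite{c_g}.
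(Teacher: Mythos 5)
Your proof is correct, but it takes a genuinely different route from the paper, which in fact never proves this statement directly: the theorem is quoted from \cite{bou_ev}, and the paper's own contribution in \S 4.2 is to reinterpret $\tau$ through the reduction machinery of \S 3. Concretely, your key identity $(\star)$, $dB + q_1^*H_1 = q_2^*H_2$, does appear in the paper, but inside the unlabelled lemma on the splitting $\nabla = \nabla_{\cc}+B$, where it serves to show that $\nabla$ is admissible and that the reduced splitting has curvature $H_2$; the paper then invokes Theorem \ref{main1} to conclude that $\tau$ carries pure spinors to reduced pure spinors, rather than performing any cohomological chain-map computation. Your argument instead uses $(\star)$ directly, together with $d\circ q_{2*} = q_{2*}\circ d$ and the projection formula, to verify $\tau \circ d_{H_1} = d_{H_2}\circ \tau$, and then gets bijectivity from the decomposition $\alpha = \pi_1^*a + \theta_1\wedge \pi_1^* b$ of invariant forms and $e^B = 1 + q_1^*\theta_1\wedge q_2^*\theta_2$; all of these steps check out (the cancellation in $(\star)$, the vanishing of $q_{2*}$ on basic forms, and the normalization $q_{2*}q_1^*\theta_1 = 1$ inherited from $\pi_{1*}\theta_1=1$ are all correct), and your parenthetical that $\tau$ is parity-reversing, hence an isomorphism of $\mathbb{Z}_2$-graded complexes, is an accurate reading of the statement. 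What each approach buys: yours is a self-contained, elementary proof of the Bouwknegt--Evslin theorem requiring nothing from \S 3; the paper's is not a proof of this theorem at all but an explanation of it, exhibiting $\tau$ as an instance of formula \eqref{main_spin} --- which is exactly the observation you make in your closing remark, so the two accounts fit together rather than compete. The one point to keep an eye on is your dismissal of the sign conventions: if one adopted a convention in which $d$ anticommutes with fibre integration, the chain-map identity itself (not just the explicit formula for $\tau$) would acquire a sign, so the statement is not literally ``immune'' to them; it is fine here only because you use the same convention as the paper and \cite{b_t}, namely that $d$ and $q_{2*}$ commute on the nose.
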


In the remainder of this paper, we give an alternative proof of the following result

\begin{prop}\cite{c_g}
The T-duality map $\tau$ \eqref{tau_def} preserves pure spinors.
\end{prop}

\begin{proof}
Consider the Courant algebroid $E=(\T M, \Cour{\cdot, \cdot}_{q^*_1 H_1})$. It carries an isotropic $S^1$-lifted action $(\mathcal{A}, \chi)$, where $\mathcal{A}$ is defined by \eqref{standard_action} (corresponding to the principal circle bundle $q_2: M \Arrow P_2$) and $\chi: \R \Arrow \Gamma(\T M)$ given by
$$
\chi (\mathbf{1}) = \mathbf{1}_M + q_2^* \theta_2.
$$
Take $N = M$ as an invariant submanifold and note that $q_1^*\theta_1$ defines a connection on $N$ such that the corresponding 2-form defined by \eqref{B_def} is exactly $B=q_1^*\theta_1\wedge q_2^*\theta_2$. It is now straightforward to check that the splitting $\nabla_{red}$ \eqref{nabla_red} for $E_{red} = (K\p/K)/G$ has curvature $H_2$ and identify $E_{red}$ with $(\T P_2, \Cour{\cdot, \cdot}_{H_2})$. Let $\varphi \in \Omega_{S^1}(M)$ be a pure spinor.  One has that $q^*_1 \varphi \in \Omega_{S^1}(M)$ is a pure spinor (see Proposition 1.5 in \cite{a_b_m} for a proof) such that 
$$
\mathcal{N}(q^*_1\varphi) = \{(Y, dq_1^*\eta) \in \T M\,\, | \,\, dq_1(Y) + \eta \in \mathcal{N}(\varphi)\}.
$$
It follows that $\mathcal{N}(q^*_1 \varphi) \cap K=0$. Hence, Theorem \ref{main_transv} gives that $\tau(\varphi) = q_2\vphantom{}_*(e^B \wedge q^* \varphi)$ is a section of the pure spinor line bundle $U_{\nabla_{red}}(\mathcal{N}(q_1^*\varphi)_{red})$. This shows completes the prove.
\end{proof}

\end{document}